\theoremstyle{plain}
   \newtheorem{theorem}{Theorem}[section]
   \newtheorem{proposition}[theorem]{Proposition}
   \newtheorem{corollary}[theorem]{Corollary}
   \newtheorem{lemma}[theorem]{Lemma}
\theoremstyle{definition}
   \newtheorem{definition}[theorem]{Definition}
   \newtheorem{example}[theorem]{Example}
\theoremstyle{remark}
   \newtheorem{remark}[theorem]{Remark}
\newcommand{\calA}{\mathcal{A}}
\newcommand{\calD}{\mathcal{D}}
\newcommand{\calE}{\mathcal{E}}
\newcommand{\calF}{\mathcal{F}}
\newcommand{\calL}{\mathcal{L}}
\newcommand{\calLpq}{\mathcal{L}^{p,q}}
\newcommand{\calLq}{\mathcal{L}^{q}}
\newcommand{\calLinfty}{\mathcal{L}^{\infty}}
\newcommand{\calM}{\mathcal{M}}
\newcommand{\calS}{\mathcal{S}}
\newcommand{\bbN}{\mathbb{N}}
\newcommand{\bbR}{\mathbb{R}}
\newcommand{\exR}{\overline{\bbR}}
\newcommand{\seqi}{{i\in\bbN}}
\newcommand{\seqj}{{j\in\bbN}}
\newcommand{\seql}{{l\in\bbN}}
\newcommand{\seqm}{{m\in\bbN}}
\newcommand{\seqn}{{n\in\bbN}}
\newcommand{\seqk}{{k\in\bbN}}
\newcommand{\ninfty}{{n\to\infty}}
\newcommand{\kinfty}{{k\to\infty}}
\newcommand{\ep}{\varepsilon}
\newcommand{\eset}{\emptyset}
\newcommand{\mconv}{\stackrel{\!\mu}{\longrightarrow}}
\newcommand{\us}[1]{{\upshape #1}}
\newcommand{\inftyn}[1]{{\|#1\|_\infty}}
\newcommand{\pqn}[1]{\|#1\|_{p,q}}
\newcommand{\qn}[1]{\|#1\|_{q}}
\newcommand{\pqnm}[2]{\|#1\colon\!#2\|_{p,q}}
\newcommand{\qnm}[2]{\|#1\colon\!#2\|_{q}}
\newcommand{\pinftyn}[1]{\|#1\|_{p,\infty}}
\newcommand{\pinftynm}[2]{\|#1\colon\!#2\|_{p,\infty}}
\newcommand{\dsn}[1]{\|#1\|_{0}}
\newcommand{\dsnm}[2]{\|#1\colon\!#2\|_{0}}
\newcommand{\norm}[1]{\|#1\|}
\newcommand{\mupq}{\mu^{q/p}}
\newcommand{\Ch}{\textup{Ch}}
\newcommand{\Sh}{\textup{Sh}}
\begin{document}
\allowdisplaybreaks
\begin{frontmatter}
%
%
\title{The completeness and separability of function spaces in nonadditive measure theory\tnoteref{t1}}
\tnotetext[t1]{This work was supported by JSPS KAKENHI Grant Number 20K03695.}
%
%
\author[1]{Jun Kawabe\corref{cor1}}
\ead{jkawabe@shinshu-u.ac.jp}
\author[2]{Naoki Yamada}
\ead{naokiy900@gmail.com}
\address[1]{Faculty of Engineering, Shinshu University, 4-17-1 Wakasato, Nagano 380-8553, Japan}
\address[2]{Graduate School of Science and Technology, Shinshu University, 4-17-1 Wakasato,
Nagano 380-8553, Japan}
\cortext[cor1]{Corresponding author}
%
%
\begin{abstract}
For a nonadditive measure $\mu$, the space  $\calL^0(\mu)$ of all measurable functions,
the Choquet-Lorentz space $\calLpq(\mu)$, the Lorentz space
of weak type $\calL^{p,\infty}(\mu)$, the space $\calLinfty(\mu)$ of
all $\mu$-essentially bounded measurable functions,
and their quotient spaces are defined together with suitable prenorms on them.
Among those function spaces,  the Choquet-Lorentz space is defined by the Choquet integral,
while the Lorentz space of weak type is defined by the Shilkret integral.
Then the completeness and separability of those spaces are investigated.
A new characteristic of nonadditive measures, called property~(C), is introduced
to establish the Cauchy criterion for convergence in $\mu$-measure of measurable functions.
This criterion and suitable convergence theorems of the Choquet and Shilkret integrals
provide instruments for carrying out our investigation.
\end{abstract}
%
%
\begin{keyword}
Nonadditive measure;
Completeness;
Separability;
Property (C);
Pseudometric generating property;
Lorentz space
\MSC[2020] Primary 28E10\sep Secondary 46E30
\end{keyword}
\end{frontmatter}


%
%
\section{Introduction}\label{intro}
The completeness and separability of function spaces is useful and important in functional analysis.
The completeness is especially effective in guaranteeing the limit of a sequence
of functions in methods of successive approximation,
and the separability enables us to obtain constructive proofs for many theorems
that can be turned into
algorithms for use in numerical and constructive analysis.
\par
In this paper, the completeness and separability of various function spaces that appear in
nonadditive measure theory are discussed in quite generality.
The spaces we consider are the function spaces such as the space $\calL^0(\mu)$
of all measurable functions, the Choquet-Lorentz space $\calL^{p,q}(\mu)$,
the Lorentz space of weak type $\calL^{p,\infty}(\mu)$,
the space $\calL^\infty(\mu)$ of all $\mu$-essentially bounded measurable functions,
and their quotient spaces determined by a proper equivalence relation.
In those function spaces, the spaces $\calL^0(\mu)$ and $\calLinfty(\mu)$ are defined
using only a nonadditive measure $\mu$, while $\calLpq(\mu)$ and $\calL^{p,\infty}(\mu)$ are defined using so-called nonlinear integrals
such as the Choquet integral and the Shilkret integral.
The difficulty here is due to the fact that
the measures and integrals involved are nonadditive, and as a result, the natural distance
on the function spaces does not satisfy the triangle inequality in general.
\par
Our strategy to overcome this difficulty is as follows.
For instance, to show the completeness of the Lorentz space $\calL^{p,q}(\mu)$,
we first establish the Cauchy criterion for convergence in $\mu$-measure
in the space $\calF_0(X)$ of all measurable real-valued functions defined on a measurable space $(X,\calA)$
with a nonadditive measure $\mu$.
In other words, we try to find a characteristic of the measure $\mu$ such that for any $\{f_n\}_\seqn\subset\calF_0(X)$,
it follows that $\{f_n\}_\seqn$ is Cauchy for convergence in $\mu$-measure if and only if $\{f_n\}_\seqn$ converges
in $\mu$-measure.
This criterion enables us to find the limit function $f$ of a given Cauchy sequence $\{f_n\}_\seqn$
in $\calL^{p,q}(\mu)$ with respect to convergence in $\mu$-measure.
Next we show and apply suitable integral convergence theorems of the Choquet integral
to verify that the limit $f$ belongs to $\calL^{p,q}(\mu)$
and the sequence $\{f_n\}_\seqn$ converges to $f$
with respect to the distance on $\calL^{p,q}(\mu)$.
We should remark that the completeness of the space $\calL^p(\mu)=\calL^{p,p}(\mu)$
of a submodular $\mu$ and the space $\calL^\infty(\mu)$ of a subadditive $\mu$
was proved in~\cite{Denneberg} provided that $\mu$ is continuous from below.
In~\cite{P-S-R} the classical $\calL^p$ space was generalized in the framework
of the pseudo-analysis.
\par
The paper is organized as follows.
Section~\ref{pre} sets up notation and terminology. It also contains a discussion of an equivalence
relation in the space $\calF_0(X)$.
The aim of Section~\ref{Cauchy} is to find a sufficient condition
to be imposed on a nonadditive measure $\mu$
for the Cauchy criterion for convergence in $\mu$-measure to hold in the space $\calF_0(X)$.
One of such conditions can be found in~\cite{J-S-W-K-L-Y} and it is shown
that the Cauchy criterion holds in $\calF_0(X)$
if $\mu$ is continuous from below and has the pseudometric
generating property; see also~\cite{L-M-P-K}.
In this section a new characteristic of nonadditive measures,
which is weaker than the continuity from below,
is introduced to show that the Cauchy criterion for convergence
in $\mu$-measure holds in $\calF_0(X)$
if $\mu$ has this property in addition to the pseudometric generating property.
The property introduced above is called property~(C), and in Section~\ref{necessity},
this property is shown to be necessary for the Cauchy criterion to hold
in the case where $X$ is countable.
The completeness of various function spaces are discussed in Sections~\ref{MFS}--\ref{EBF}
by newly presenting some integral convergence theorems of the Choquet and Shilkret integrals.
Dense subsets and the separability of the function spaces are also discussed as related topics. 
Section~\ref{conclusion} provides a summary of our results and future tasks.
%
%
\section{Preliminaries}\label{pre}
Throughout the paper, $(X,\calA)$ is a measurable space, that is,
$X$ is a nonempty set and $\calA$ is a $\sigma$-field of subsets of $X$.
Let $\bbR$ denote the set of the real numbers and $\bbN$
the set of the natural numbers.
Let $\exR:=[-\infty,\infty]$ be the set of the extended real numbers with usual total order
and algebraic structure.
Assume that $(\pm\infty)\cdot 0=0\cdot (\pm\infty)=0$
since this proves to be convenient in measure and integration theory.
\par
For any $a,b\in\exR$, let $a\vee b:=\max\{a,b\}$ and $a\wedge b:=\min\{a,b\}$
and for any $f,g\colon X\to\exR$, let $(f\vee g)(x):=f(x)\vee g(x)$
and $(f\wedge g)(x):=f(x)\wedge g(x)$
for every $x\in X$. 
Let $\calF_0(X)$ denote the set of all $\calA$-measurable real-valued functions on $X$.
Then $\calF_0(X)$ is a real linear space with usual pointwise addition and scalar multiplication.
For any $f,g\in\calF_0(X)$, the notation $f\leq g$ means that $f(x)\leq g(x)$ for every $x\in X$.
Let $\calF_0^+(X):=\{f\in\calF_0(X)\colon f\geq 0\}$.
A function taking only a finite number of real numbers is called a \emph{simple function}.
Let $\calS(X)$ denote the set of all $\calA$-measurable simple functions on $X$.
\par
For a sequence $\{a_n\}_\seqn\subset\exR$ and $a\in\exR$, the notation
$a_n\uparrow a$ means
that $\{a_n\}_\seqn$ is nondecreasing and $a_n\to a$, and $a_n\downarrow a$ means
that $\{a_n\}_\seqn$ is nonincreasing and $a_n\to a$. 
For a sequence $\{A_n\}_\seqn\subset\calA$ and $A\in\calA$,
the notation $A_n\uparrow A$ means
that $\{A_n\}_\seqn$ is nondecreasing and $A=\bigcup_{n=1}^\infty A_n$,
and $A_n\downarrow A$
means that $\{A_n\}_\seqn$ is nonincreasing and $A=\bigcap_{n=1}^\infty A_n$.
The characteristic function of a set $A$, denoted by $\chi_A$, is the function on $X$
such that $\chi_A(x)=1$ if $x\in A$ and $\chi_A(x)=0$ otherwise.
Given two sets $A$ and $B$, let $A\triangle B:=(A\setminus B)\cup (B\setminus A)$
and $A^c:=X\setminus A$.
Let $2^X$ denote the collection of all subsets of $X$.
%
%
\subsection{Nonadditive measures}\label{measure}
A \emph{nonadditive measure}\/ is a set function $\mu\colon\calA\to [0,\infty]$ such that
$\mu(\emptyset)=0$ and $\mu(A)\leq\mu(B)$ whenever $A,B\in\calA$ and $A\subset B$.
This type of set function is also called a monotone measure~\cite{W-K},
a capacity~\cite{Choquet},
or a fuzzy measure~\cite{R-A,Sugeno} in the literature.
\par
Let $\calM(X)$ denote the set of all nonadditive measures $\mu\colon\calA\to [0,\infty]$.
We say that $\mu$ is \emph{order continuous}~\cite{Drewnowski}
if $\mu(A_n)\to 0$ whenever $A_n\downarrow\eset$,
\emph{conditionally order continuous}\/ if $\mu(A_n)\to 0$ whenever $A_n\downarrow\eset$
and $\mu(A_1)<\infty$,
\emph{strongly order continuous}~\cite{J-K-W} if $\mu(A_n)\to 0$
whenever $A_n\downarrow A$ and $\mu(A)=0$,
\emph{continuous from above}\/ if $\mu(A_n)\to\mu(A)$ whenever $A_n\downarrow A$,
\emph{continuous from below}\/ if $\mu(A_n)\to\mu(A)$ whenever $A_n\uparrow A$,
and \emph{continuous}\/ if it is continuous from above and from below.
If $\mu$ is continuous from above, then it is strongly order continuous, hence order continuous.
If $\mu$ is order continuous, then it is conditionally order continuous, but the converse
does not hold even for the Lebesgue measure on the real line.
\begin{remark}
In measure theory the notion of continuity from above is usually defined by adding the
condition $\mu(A_1)<\infty$ in such a way that the Lebesgue measure
satisfies this continuity.
A nonadditive measure $\mu$ with the property that
$\mu(A_n)\to\mu(A)$ whenever $A_n\downarrow A$ and $\mu(A_1)<\infty$
is called \emph{conditionally continuous from above}.
The continuity from above obviously implies the conditional continuity from above. 
Thus, in nonadditive measure theory, there are two types of continuity from above,
which are clearly distinguished and both are indispensable.
This is because we often deal with nonadditive measures that are deformations
of the Lebesgue measure, so called distorted measures~\cite{Denneberg,Pap}
or quasi-measures~\cite{W-K} in nonadditive measure theory. 
In fact, the distorted nonadditive measure $\mu$, which is
defined by $\mu(A):=\tan(\pi\lambda(A)/2)$ for every Borel subset of $[0,1]$,
is continuous from above, where $\lambda$ is the Lebesgue measure
and let $\tan(\pi/2):=\infty$.
\end{remark}
\par
Following the terminology used in~\cite{W-K},
$\mu$ is called \emph{weakly null-additive}\/ if $\mu(A\cup B)=0$
whenever $A,B\in\calA$ and $\mu(A)=\mu(B)=0$,
\emph{null-additive}\/ if $\mu(A\cup B)=\mu(A)$ whenever $A,B\in\calA$ and $\mu(B)=0$,
\emph{autocontinuous from above}\/ if $\mu(A\cup B_n)\to\mu(A)$
whenever $A, B_n\in\calA$ and $\mu(B_n)\to 0$, and
\emph{autocontinuous from below}\/ if $\mu(A\setminus B_n)\to\mu(A)$
whenever $A, B_n\in\calA$ and $\mu(B_n)\to 0$.
Furthermore, we say that $\mu$ has \emph{property (S)}~\cite{Sun}
if any $\{A_n\}_\seqn\subset\calA$ with $\mu(A_n)\to 0$
has a subsequence $\{A_{n_k}\}_\seqk$
such that $\mu\left(\bigcap_{i=1}^\infty\bigcup_{k=i}^\infty A_{n_k}\right)=0$,
\emph{property ($\mbox{S}_1$)}~\cite{L-L}\/ if any $\{A_n\}_\seqn\subset\calA$
with $\mu(A_n)\to 0$ has a subsequence $\{A_{n_k}\}_\seqk$
such that $\mu\left(\bigcup_{k=i}^\infty A_{n_k}\right)\to 0$,
and the \emph{pseudometric generating property}\/ ((p.g.p.)~for short)~\cite{D-F}
if $\mu(A_n\cup B_n)\to 0$ whenever $A_n,B_n\in\calA$
and $\mu(A_n)\lor\mu(B_n)\to 0$.
It is easy to see that $\mu$ has the (p.g.p.)~if and only if for any $\ep>0$ there is a $\delta>0$
such that $\mu(A\cup B)<\ep$ whenever $A,B\in\calA$ and $\mu(A)\lor\mu(B)<\delta$.
\par
The following characteristic of nonadditive measures is also used.
We say that $\mu$ is \emph{monotone autocontinuous from above}~\cite{Rebille}\/ if
$\mu(A\cup B_n)\to\mu(A)$ whenever $A,B_n\in\calA$, $\{B_n\}_\seqn$ is nonincreasing,
and $\mu(B_n)\to 0$, \emph{monotone autocontinuous from below}~\cite{Rebille}\/ if
$\mu(A\setminus B_n)\to\mu(A)$ whenever $A,B_n\in\calA$,
$\{B_n\}_\seqn$ is nonincreasing,
and $\mu(B_n)\to 0$, and \emph{null-continuous}~\cite{U-M}
if $\mu(\bigcup_{n=1}^\infty N_n)=0$ whenever $\{N_n\}_\seqn\subset\calA$ is nondecreasing
and $\mu(N_n)=0$ for every $\seqn$.
\par
The autocontinuity from above implies the monotone autocontinuity from above,
while the autocontinuity from below implies the monotone autocontinuity from below.
If $\mu$ is monotone autocontinuous from above or from below,
then it is null-additive, hence weakly null-additive.
If $\mu$ has the (p.g.p.), then it is weakly null-additive.
Property ($\mbox{S}_1$) always implies property (S);
they are equivalent if $\mu$ is strongly order continuous.
If $\mu$ is continuous from below and has the (p.g.p.), then it has property ($\mbox{S}_1$);
see~\cite[Corollary~1]{J-W-Z-W-K}.
\par
If $\mu$ is autocontinuous from above and continuous from below,
then it is autocontinuous from below~\cite[Theorem~6.12]{W-K} and
has property (S)~\cite[Proposition~6]{J-S-W-K}
and the (p.g.p.)~\cite[Theorem~2]{J-W-Z-W-K}.
If $\mu$ is continuous from below, then it is null-continuous.
The null-continuity also follows from property (S)~\cite[Proposition~3.1]{U-M}
and from the conjunction of the weak null-additivity
and the strong order continuity~\cite[Proposition~9]{A-U-M}.
If $X$ is countable, then the null-continuity is equivalent
to property (S)~\cite[Proposition~3.2]{U-M}.
\par
A nonadditive measure $\mu$ is called \emph{subadditive}\/ if 
$\mu(A\cup B)\leq\mu(A)+\mu(B)$ for every disjoint $A,B\in\calA$,
\emph{relaxed subadditive}\/ if there is a constant $K\geq 1$ such that
$\mu(A\cup B)\leq K\left\{\mu(A)+\mu(B)\right\}$
for every disjoint $A,B\in\calA$ (in this case
$\mu$ is called \emph{$K$-relaxed subadditive}).
Every subadditive nonadditive measure is relaxed subadditive.
If $\mu$ is relaxed subadditive, then it has the (p.g.p.).
\begin{remark}
The relaxed subadditivity is also called the quasi-subadditivity according to the terminology
used in metric space theory.
\end{remark}
It is easy to see that a nonadditive measure $\mu$ has one of the characteristics
introduced above other than the subadditivity if and only if the measure $\mu^r$,
which is defined by $\mu^r(A):=\mu(A)^r$ for every $A\in\calA$ and $r>0$, has
the same characteristic. For instance, $\mu$ has the (p.g.p.)~if and
only if $\mu^r$ has the (p.g.p.).
This observation is to be useful and important when discussing the completeness and separability
of the Choquet-Lorentz space in Section~\ref{CLspace}.
\par
See ~\cite{Denneberg,Pap,W-K} for further information on nonadditive measures.
\subsection{The Choquet and Shilkret integrals}\label{integrals}
The following nonlinear integrals are widely used in nonadditive measure theory and its applications.
Let $\mu\in\calM(X)$. The \emph{Choquet integral}~\cite{Choquet, Schmeidler} is defined by
\[
\Ch(\mu,f):=\int_0^\infty\mu(\{f>t\})dt,
\]
for every $f\in\calF_0^+(X)$, where the right hand side is the Lebesgue integral.
The Choquet integral is equal to the abstract Lebesgue integral if $\mu$ is
$\sigma$-additive~\cite[Propositions~8.1 and 8.2]{Kawabe2016}.
The \emph{Shilkret integral}~\cite{Shilkret,Zhao} is defined by
\[
\Sh(\mu,f):=\sup_{t\in [0,\infty)}t\mu(\{f>t\})
\]
for every $f\in\calF_0^+(X)$.
In the above definitions the nonincreasing distribution function $\mu(\{f>t\})$ may be replaced with
$\mu(\{f\geq t\})$ without any change. 
\par
The following elementary properties of the Choquet and Shilkret integrals are 
easy to prove; see also~\cite{Kawabe2016}.
Recall that these integrals are neither additive nor even subadditive in general.
\begin{itemize}
\item Monotonicity:\ For any $f,g\in\calF_0^+(X)$, if $f\leq g$,
then $\Ch(\mu,f)\leq\Ch(\mu,g)$ and $\Sh(\mu,f)\leq\Sh(\mu,g)$.
\item Generativity:\ For any $c\geq 0$ and $A\in\calA$,
it follows that $\Ch(\mu,c\chi_A)=\Sh(\mu,c\chi_A)=c\mu(A)$.
\item Positive homogeneousness:\ For any $c\geq 0$ and $f\in\calF_0^+(X)$,
it follows that $\Ch(\mu,cf)=c\,\Ch(\mu,f)$ and $\Sh(\mu,cf)=c\,\Sh(\mu,f)$.
\item Elementariness:\ If $h\in\calS(X)$ is represented by
\[
h=\sum_{k=1}^n(c_k-c_{k-1})\chi_{A_k}=\bigvee_{k=1}^n c_k\chi_{A_k},
\]
where $\seqn$, $c_0=0<c_1<c_2<\dots<c_n<\infty$, and
$A_1\supset A_2\supset\dots\supset A_n$, then it follows that
\[
\Ch(\mu,h)=\sum_{k=1}^n (c_k-c_{k-1})\mu(A_k)\;\mbox{ and }\;\Sh(\mu,h)=\bigvee_{k=1}^n c_k\mu(A_k).
\]
\item Relaxed subadditivity:\ If $\mu$ is $K$-relaxed subadditive for some $K\geq 1$,
then for any $f,g\in\calF_0^+(X)$ it follows that
\[
\Ch(\mu,f+g)\leq 2K\bigl\{\Ch(\mu,f)+\Ch(\mu,g)\bigr\}
\]
and
\[
\Sh(\mu,f+g)\leq 2K\bigl\{\Sh(\mu,f)+\Sh(\mu,g)\bigr\}.
\]
\item Upper marginal continuity:\ For any $f\in\calF_0^+(X)$, it follows that
\[
\Ch(\mu,f)=\sup_{r>0}\Ch(\mu,f\land r)
\;\mbox{ and }\;\Sh(\mu,f)=\sup_{r>0}\Sh(\mu,f\land r).
\]
\item Transformation formula:\ Let $0<p<\infty$. For any $f\in\calF_0^+(X)$ it follows that
\[
\Ch(\mu,f^p)=\int_0^\infty pt^{p-1}\mu(\{f>t\})dt
\]
and
\[
\Sh(\mu,f^p)=\Sh(\mu^{1/p},f)^p.
\]
\end{itemize}
\subsection{Various modes of convergence of measurable functions}\label{mode}
Let $\{f_n\}_\seqn\subset\calF_0(X)$ and $f\in\calF_0(X)$.
There are several ways to define the convergence of sequences of measurable functions.
We say that $\{f_n\}_\seqn$ converges \emph{$\mu$-almost everywhere}\/ to $f$,
denoted by $f_n\to f$ $\mu$-a.e.,
if there is an $N\in\calA$ such that $\mu(N)=0$ and $f_n(x)\to f(x)$ for every $x\not\in N$.
We also say that $\{f_n\}_\seqn$ converges \emph{$\mu$-almost uniformly}\/ to $f$, denoted by
$f_n\to f$ $\mu$-a.u., if for any $\ep>0$ there is an $E_\ep\in\calA$ such that $\mu(E_\ep)<\ep$
and $f_n$ converges to $f$ uniformly on $X\setminus E_\ep$.
Another concept of convergence is not quite intuitive, but it has
some advantages in analysis.
We say that $f_n$ converges \emph{in $\mu$-measure}\/ to $f$, denoted by $f_n\mconv f$,
if $\mu(\{|f_n-f|>\ep\})\to 0$ for every $\ep>0$.
The sequence $\{f_n\}_\seqn\subset\calF_0(X)$ is simply
said to converge $\mu$-almost everywhere,
$\mu$-almost uniformly, and in $\mu$-measure, if there is a function $f\in\calF_0(X)$ such that
$f_n\to f$ $\mu$-a.e., $f_n\to f$ $\mu$-a.u., and $f_n\mconv f$, respectively.
Every sequence of measurable functions converging $\mu$-almost uniformly converges
$\mu$-almost everywhere and in $\mu$-measure to the same limit function.
\par
The three modes of convergence introduced above
require that the differences between the elements $f_n$
of the sequence and the limit function $f$ should become small in some sense as $n$ increases.
The following definition involves only the elements of the sequence.
We say that $\{f_n\}_\seqn$
is \emph{Cauchy in $\mu$-measure}\/ if for any $\ep>0$ and $\delta>0$
there is an $n_0\in\bbN$ such that $\mu(\{|f_m-f_n|>\ep\})<\delta$
whenever $m,n\in\bbN$ and $m,n\geq n_0$.
\par
The relation between convergence in measure and almost everywhere convergence
is made precise in nonadditive versions of the Lebesgue and the Riesz theorem.
The former states that any sequence converging $\mu$-almost everywhere converges in $\mu$-measure
if and only if $\mu$ is strongly order continuous~\cite[Theorem~5.2]{L-M-P-K},
and the latter states that any sequence converging in $\mu$-measure
has a subsequence converging $\mu$-almost everywhere
if and only if $\mu$ has property (S)~\cite[Theorem~5.17]{L-M-P-K}.
Furthermore, any sequence converging in $\mu$-measure has a subsequence converging $\mu$-almost
uniformly if and only if $\mu$ has property ($\mbox{S}_1$)~\cite[Theorem~4]{L-L}.
For Cauchy in $\mu$-measure sequences, it can be found in~\cite[Theorem~7.2]{L-M-P-K} that
any sequence converging in $\mu$-measure is Cauchy in $\mu$-measure if and only if
$\mu$ has the (p.g.p.).
By~\cite[Theorem~7.3]{L-M-P-K}
any Cauchy in $\mu$-measure sequence always converges in $\mu$-measure
if $\mu$ is continuous from below and has the (p.g.p.).
Thus the conjunction of the continuity of $\mu$ from below and the (p.g.p.)~is a sufficient condition
for the  Cauchy criterion to hold for convergence in $\mu$-measure of measurable functions.
\par
See a survey paper~\cite{L-M-P-K} for further information on various modes
of convergence of measurable functions in nonadditive measure theory.
%
%
\subsection{Equivalence relation and quotient space}\label{equiv}
The quotient space of $\calF_0(X)$ is constructed by an equivalence relation
determined by a nonadditive measure $\mu$.
The proof of the following statements is routine and left it to the reader.
\begin{itemize}
\item Assume that $\mu$ is weakly null-additive. Given $f,g\in\calF_0(X)$,
define the binary relation $f\sim g$ on $\calF_0(X)$ by $\mu(\{|f-g|>c\})=0$ for every $c>0$
so as to become an equivalence relation on $\calF_0(X)$.
For every $f\in\calF_0(X)$ the equivalence class of $f$
is the set of the form $\{g\in\calF_0(X)\colon f\sim g\}$ and denoted by $[f]$.
Then the quotient space of $\calF_0(X)$ is defined by $F_0(X):=\{[f]\colon f\in\calF_0(X)\}$.
\item Assume that $\mu$ is weakly null-additive.
Given equivalence classes $[f],[g]\in F_0(X)$ and $c\in\bbR$, define addition and scalar multiplication
on $F_0(X)$ by $[f]+[g]:=[f+g]$ and $c[f]:=[cf]$.
They are well-defined, that is, they are independent of which member of an equivalence class we choose
to define them.
Then $F_0(X)$ is a real linear space.
\end{itemize}
\par
The binary relation on $\calF_0(X)$ defined above may not
be transitive unless $\mu$ is weakly null-additive; see~\cite[Example~5.1]{Kawabe2021}.
In what follows, let $S(X):=\{[h]\colon h\in\calS(X)\}$.
%
%
\subsection{Prenorms}\label{prenorm}
Let $V$ be a real linear space.
A \emph{prenorm} on $V$ is a nonnegative real-valued function $\norm{\cdot}$
defined on $V$ such that $\norm{0}=0$ and $\norm{-x}=\norm{x}$ for every $x\in V$.
Then the pair $(V,\norm{\cdot})$ is called a \emph{prenormed space}.
A prenorm $\norm{\cdot}$ is called \emph{homogeneous}\/ if it follows that
$\norm{cx}=|c|\norm{x}$ for every $x\in V$ and $c\in\bbR$ and
\emph{truncated subhomogeneous}\/ if it follows that
$\norm{cx}\leq\max(1,|c|)\norm{x}$ for every $x\in V$ and $c\in\bbR$.
A \emph{seminorm}\/ is a prenorm that is homogeneous and satisfies the triangle inequality, that is,
$\norm{x+y}\leq\norm{x}+\norm{y}$ for every $x,y\in V$.
Then a \emph{norm} is a seminorm that separates points of $V$, that is,
for any $x\in V$, if $\norm{x}=0$ then $x=0$.
Following~\cite{D-D}, a prenorm $\norm{\cdot}$ is called relaxed if it
satisfies a \emph{relaxed triangle inequality}, that is, there is a constant $K\geq 1$
such that $\norm{x+y}\leq K\left\{\norm{x}+\norm{y}\right\}$ for every $x,y\in V$
(in this case, $\norm{\cdot}$ is called the \emph{$K$-relaxed prenorm}).
A \emph{quasi-seminorm}\/ on $V$ is a prenorm that is homogeneous
and satisfies a relaxed triangle inequality. Then a \emph{quasi-norm}\/ is a quasi-seminorm
that separates points of $V$.
\par
To associate with similar characteristics of nonadditive measures,
a prenorm $\norm{\cdot}$ is called \emph{weakly null-additive}\/ if $\norm{x+y}=0$
whenever $x,y\in V$ and $\norm{x}=\norm{y}=0$ and
\emph{null-additive}\/ if $\norm{x+y}=\norm{x}$
whenever $x,y\in V$ and $\norm{y}=0$.
\par
Let $(V,\norm{\cdot})$ be a prenormed space.
Let $\{x_n\}_\seqn\subset V$ and $x\in V$.
We say that $\{x_n\}_\seqn$ \emph{converges}\/ to $x$, denoted by $x_n\to x$,
if $\norm{x_n-x}\to 0$.
We may simply say that $\{x_n\}_\seqn$ converges
if the limit $x$ is not needed to specify.
The notion of a Cauchy sequence involves only the elements of the sequence and
we say that $\{x_n\}_\seqn$ is \emph{Cauchy}\/ if for
any $\varepsilon>0$ there is an $n_0\in\bbN$
such that $\norm{x_m-x_n}<\varepsilon$ whenever $m,n\in\bbN$ and $m,n\geq n_0$.
Not every converging sequence is Cauchy since prenorms satisfy neither
the triangle inequality nor its relaxed ones in general.
A subset $B$ of $V$ is called \emph{bounded}\/ if $\sup_{x\in B}\norm{x}<\infty$.
\par
A prenormed space $(V,\norm{\cdot})$ is called \emph{complete}\/
if every Cauchy sequence in $V$ converges to an element in $V$.
It is called \emph{quasi-complete}\/ if every bounded
Cauchy sequence in $V$ converges to an element in $V$.
The denseness and the separability can be defined in the same way as in ordinary normed spaces.
We say that $V$ is \emph{separable}\/ if there is a countable subset $D$ of $V$ such that
$D$ is \emph{dense} in $V$, that is,
for any $x\in V$ and $\ep>0$ there is a $y\in D$ such that $\norm{x-y}<\ep$.
\par
If the prenorm $\norm{\cdot}$ is needed to emphasize in the above terms,
then the phrase ``with respect to $\norm{\cdot}$'' is added to each term. 
%
%
\section{The Cauchy criterion for convergence in measure}\label{Cauchy}
Given a $\sigma$-additive measure $\mu$, a sequence of measurable functions
converges in $\mu$-measure if and only if it is Cauchy
in $\mu$-measure~\cite[Theorems~C and~E in Chapter~IV, Section~22]{Halmos}.
This fact is referred to as the Cauchy criterion for convergence in measure
and was already extended to nonadditive measures that are continuous from below
and have the (p.g.p.).
Therefore, the discussion in this section starts with recalling some
already known results associated with the Cauchy criterion.
\begin{theorem}[{\cite[Theorems~7.2 and 7.3]{L-M-P-K} and \cite[Theorems~1 and~2]{J-S-W-K-L-Y}}]\label{base}
Let $\mu\in\calM(X)$.
\begin{enumerate}
\item[\us{(1)}] The following assertions are equivalent.
\begin{enumerate}
\item[\us{(i)}] $\mu$ has the (p.g.p.).
\item[\us{(ii)}] Any sequence $\{f_n\}_\seqn\subset\calF_0(X)$ converging in $\mu$-measure
is Cauchy in $\mu$-measure.
\end{enumerate}
\item[\us{(2)}] Assume that $\mu$ has the (p.g.p.).
If a sequence $\{f_n\}_\seqn\subset\calF_0(X)$ is Cauchy in $\mu$-measure
and has a subsequence converging in $\mu$-measure to a function $f\in\calF_0(X)$,
then $f_n\mconv f$.  
\item[\us{(3)}] Assume that $\mu$ is continuous from below and has the (p.g.p.).
Any Cauchy in $\mu$-measure sequence $\{f_n\}_\seqn\subset\calF_0(X)$ has a subsequence
converging $\mu$-almost uniformly.
\item[\us{(4)}] 
Assume that $\mu$ is continuous from below and has the (p.g.p.).
Then, for any sequence $\{f_n\}_\seqn\subset\calF_0(X)$
the following assertions are equivalent.
\begin{enumerate}
\item[\us{(i)}] $\{f_n\}_\seqn$ is Cauchy in $\mu$-measure.
\item[\us{(ii)}] $\{f_n\}_\seqn$ converges in $\mu$-measure.
\end{enumerate} 
\end{enumerate}
\end{theorem}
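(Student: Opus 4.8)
I would prove the four assertions in the order stated, since (4) is obtained by assembling (1)--(3). For (1), the implication (i)$\Rightarrow$(ii) is a one-line union estimate: given $\ep,\delta>0$, use the (p.g.p.) to choose $\eta>0$ with $\mu(A\cup B)<\delta$ whenever $\mu(A)\vee\mu(B)<\eta$, pick $n_0$ with $\mu(\{|f_n-f|>\ep/2\})<\eta$ for $n\ge n_0$, and observe $\{|f_m-f_n|>\ep\}\subset\{|f_m-f|>\ep/2\}\cup\{|f_n-f|>\ep/2\}$. For the converse I would argue by contraposition: if the (p.g.p.) fails there are $\ep_0>0$ and sets $A_n,B_n\in\calA$ with $\mu(A_n)\vee\mu(B_n)\to 0$ but $\mu(A_n\cup B_n)\ge\ep_0$, and then the interleaved sequence defined by $f_{2n-1}:=\chi_{A_n}$ and $f_{2n}:=-\chi_{B_n}$ converges to $0$ in $\mu$-measure (each term is supported on a set of vanishing measure) yet fails to be Cauchy in $\mu$-measure, since $\{|f_{2n-1}-f_{2n}|>1/2\}=A_n\cup B_n$. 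Assertion (2) uses the very same two-set trick, combining via the (p.g.p.) the Cauchy estimate for $\{f_n\}_\seqn$ with the convergence estimate for the given subsequence.

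The substantial step is (3). From a sequence $\{f_n\}_\seqn$ that is Cauchy in $\mu$-measure I would extract a subsequence whose rate of Cauchyness is tuned to the modulus of the (p.g.p.). First, using the (p.g.p.), I would fix $\delta_1>\delta_2>\cdots\downarrow 0$ such that $\mu(A\cup B)<\delta_k$ whenever $\mu(A)\vee\mu(B)<\delta_{k+1}$; then, using the Cauchy hypothesis, I would choose indices $m_1<m_2<\cdots$ so that the sets $E_k:=\{|f_{m_{k+1}}-f_{m_k}|>2^{-k}\}$ satisfy $\mu(E_k)<\delta_{k+1}$. A downward induction on $j$ gives $\mu\bigl(\bigcup_{k=j}^{N}E_k\bigr)<\delta_j$ for every $N$, and continuity from below upgrades this to $\mu\bigl(\bigcup_{k\ge j}E_k\bigr)\le\delta_j\to 0$. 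On the complement of $\bigcup_{k\ge j}E_k$ the tail $\{f_{m_k}\}_{k\ge j}$ is uniformly Cauchy by telescoping against $\sum_{k\ge j}2^{-k}$; letting $f$ be its pointwise limit off the $\mu$-null set $\limsup_k E_k$ and $0$ on it gives $f\in\calF_0(X)$ with $f_{m_k}\to f$ $\mu$-almost uniformly. (This is essentially a re-proof of ``continuity from below $+$ (p.g.p.) $\Rightarrow$ property $(\mathrm{S}_1)$'' recalled in Section~\ref{measure}; alternatively one could quote that fact together with the Riesz-type theorem \cite[Theorem~4]{L-L}.)

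For (4): (ii)$\Rightarrow$(i) is exactly (1). For (i)$\Rightarrow$(ii), a Cauchy in $\mu$-measure sequence has, by (3), a subsequence converging $\mu$-almost uniformly, hence in $\mu$-measure, to some $f\in\calF_0(X)$; applying (2) (again using the (p.g.p.)) upgrades this to $f_n\mconv f$. The main obstacle, and the only genuinely delicate point, is the coordination in (3) between the rate $2^{-k}$ of the extracted subsequence and the non-uniform modulus $\{\delta_k\}_\seqk$ supplied by the (p.g.p.): if the indices $m_k$ are chosen too greedily the downward-induction bound on $\mu(\bigcup_{k\ge j}E_k)$ collapses, and it is exactly here that the (p.g.p.) and continuity from below must be used together.
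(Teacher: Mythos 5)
Your proposal is correct and follows essentially the same route as the source of this theorem. The paper itself only quotes this result from \cite{L-M-P-K} and \cite{J-S-W-K-L-Y}, but your argument for assertion (3) is precisely the construction the paper sketches for its sharpened version (Theorem~\ref{comp}): the tuned sequence $\{\delta_k\}$ from the (p.g.p.), the sets $E_k$, and the downward induction giving $\mu\bigl(\bigcup_{k=j}^{N}E_k\bigr)<\delta_j$ are identical, with your direct use of continuity from below replacing the paper's appeal to property~(C) at the passage to $\mu\bigl(\bigcup_{k\ge j}E_k\bigr)\le\delta_j$; likewise your assembly of (4) from (1)--(3) matches the paper's proof of Theorem~\ref{fund}.
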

\par
The purpose of this section is to find a weaker characteristic of a nonadditive measure $\mu$,
under which every Cauchy in $\mu$-measure sequence of measurable functions
converges in $\mu$-measure. 
To this end we first introduce a new characteristic of nonadditive measures.
\begin{definition}\label{C-pro}
Let $\mu\in\calM(X)$. We say that $\mu$ has \emph{property~(C)}\/ if
for any sequence $\{E_n\}_\seqn\subset\calA$, it follows that
$\mu\left(\bigcup_{n=k}^\infty E_n\right)\to 0$ whenever
\[
\sup_\seql\mu\left(\bigcup_{n=k}^{k+l}E_n\right)\to 0.
\]
\end{definition}
It is easy to see that every nonadditive measure that is continuous from below has property~(C).
Other examples of nonadditive measures having property~(C) are as follows.
\begin{proposition}\label{ex-C-pro}
Let $\mu\in\calM(X)$.
\begin{enumerate}
\item[\us{(1)}] Assume that for any $\ep>0$ there is a $\delta>0$ such that condition
\[
\sup_\seqn\mu(E_n)<\delta\,\Longrightarrow\,
\mu\left(\bigcup_{n=1}^\infty E_n\right)<\ep \tag{$*$}
\]
holds for every nondecreasing sequence $\{E_n\}_\seqn\subset\calA$.
Then $\mu$ has property (C).
In particular, the above assumption holds if there is a constant $M\geq 1$ such that
\[
\mu\left(\bigcup_{n=1}^\infty E_n\right)\leq M\sup_\seqn\mu(E_n)
\]
for every nondecreasing sequence $\{E_n\}_\seqn\subset\calA$.
\item[\us{(2)}] Assume that for any $\ep>0$ there is a $\delta>0$ such that condition ($*$)
holds for every sequence $\{E_n\}_\seqn\subset\calA$.
Then $\mu$ has property~(C) and the (p.g.p.).
\item[\us{(3)}] Let $\lambda\in\calM(X)$.
Let $\theta\colon [0,\infty]\to [0,\infty]$ be a nondecreasing function with $\theta(0)=0$
that is continuous and increasing on a neighborhood of $0$.
Let $\theta\circ\lambda\colon\calA\to [0,\infty]$ is the nonadditive measure
defined by $\theta\circ\lambda(A):=\theta(\lambda(A))$ for every $A\in\calA$.
If $\lambda$ is continuous from below and has the (p.g.p.),
then $\theta\circ\lambda$ has property~(C) and the (p.g.p.).
\item[\us{(4)}] If $\mu$ has property~(C), then it is null-continuous.
\end{enumerate}
\end{proposition}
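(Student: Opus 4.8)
The plan is to treat the four parts separately. For part~(1), suppose $\{E_n\}_\seqn\subset\calA$ satisfies $\sup_\seql\mu\bigl(\bigcup_{n=k}^{k+l}E_n\bigr)\to 0$ as $\kinfty$. Fix $\ep>0$, take the $\delta>0$ furnished by the hypothesis, and pick $k_0$ with $\sup_\seql\mu\bigl(\bigcup_{n=k}^{k+l}E_n\bigr)<\delta$ for all $k\geq k_0$. For each such $k$, the sets $G_l:=\bigcup_{n=k}^{k+l}E_n$ ($l\in\bbN$) form a nondecreasing sequence with $\sup_\seql\mu(G_l)<\delta$ and $\bigcup_\seql G_l=\bigcup_{n=k}^\infty E_n$, so condition~($*$) applied to $\{G_l\}_\seql$ gives $\mu\bigl(\bigcup_{n=k}^\infty E_n\bigr)<\ep$; as $\ep$ was arbitrary, $\mu\bigl(\bigcup_{n=k}^\infty E_n\bigr)\to 0$, which is property~(C). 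The ``in particular'' clause is the special case $\delta:=\ep/M$. For part~(2), property~(C) follows at once from part~(1) (a hypothesis valid for all sequences is valid for nondecreasing ones), and the (p.g.p.)\ follows by applying ($*$) to $E_1:=A$, $E_2:=B$, $E_n:=\eset$ ($n\geq 3$), for which $\sup_\seqn\mu(E_n)=\mu(A)\lor\mu(B)$, together with the $\ep$--$\delta$ form of the (p.g.p.)\ recalled in Section~\ref{measure}.

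For part~(3), first shrink the given neighborhood so that $\theta$ is continuous and strictly increasing on a closed interval $[0,a]$ with $a>0$; then $\theta$ restricts to a homeomorphism of $[0,a]$ onto $[0,\theta(a)]$ with $\theta(a)>0$, and I write $\psi$ for its (continuous, strictly increasing) inverse. I would establish the two conclusions separately. \emph{The (p.g.p.)\ of $\theta\circ\lambda$:} given $\ep>0$ we may assume $\ep<\theta(a)$, so $\psi(\ep)\in(0,a)$; the (p.g.p.)\ of $\lambda$ provides some $\eta\in(0,a)$ with $\lambda(A\cup B)<\psi(\ep)$ whenever $\lambda(A)\lor\lambda(B)<\eta$, and I set $\delta:=\theta(\eta)>0$. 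If $\theta(\lambda(A))\lor\theta(\lambda(B))<\delta=\theta(\eta)<\theta(a)$, then monotonicity of $\theta$ forces $\lambda(A),\lambda(B)<a$ and strict monotonicity on $[0,a]$ then forces $\lambda(A)\lor\lambda(B)<\eta$, whence $\lambda(A\cup B)<\psi(\ep)$ and $\theta(\lambda(A\cup B))<\theta(\psi(\ep))=\ep$. \emph{Property~(C) of $\theta\circ\lambda$:} I would verify the hypothesis of part~(1). Given $\ep>0$ set $\delta:=\min(\ep,\theta(a))$ and let $\{E_n\}_\seqn$ be nondecreasing with $\sup_\seqn\theta(\lambda(E_n))<\delta$. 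Monotonicity of $\theta$ forces every $\lambda(E_n)<a$, and continuity from below of $\lambda$ gives $\lambda\bigl(\bigcup_\seqn E_n\bigr)=\sup_\seqn\lambda(E_n)=:L\leq a$. If $L<a$ then continuity of $\theta$ at $L$ yields $\theta(L)=\sup_\seqn\theta(\lambda(E_n))<\delta\leq\ep$, so $\theta(\lambda(\bigcup_\seqn E_n))<\ep$; and $L=a$ cannot occur, for then $\lambda(E_n)\uparrow a$ and left-continuity of $\theta$ at $a$ would give $\sup_\seqn\theta(\lambda(E_n))=\theta(a)\geq\delta$, contradicting the hypothesis. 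Hence $\theta\circ\lambda$ has property~(C) by part~(1). I expect the only delicate point to be the endpoint bookkeeping in the property~(C) argument — separating $L<a$ from $L=a$ and choosing $\delta$ to exclude the latter; note that this argument uses only the continuity from below of $\lambda$, whereas the (p.g.p.)\ of $\lambda$ enters only in the (p.g.p.)\ conclusion, which is consistent with property~(C) being strictly weaker than continuity from below.

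For part~(4), let $\{N_n\}_\seqn\subset\calA$ be nondecreasing with $\mu(N_n)=0$ for all $\seqn$ and apply property~(C) to $E_n:=N_n$: since $\bigcup_{n=k}^{k+l}N_n=N_{k+l}$ has $\mu$-measure $0$, the hypothesis of Definition~\ref{C-pro} holds trivially, so $\mu\bigl(\bigcup_{n=k}^\infty N_n\bigr)\to 0$ as $\kinfty$; but $\bigcup_{n=k}^\infty N_n=\bigcup_{n=1}^\infty N_n$ for every $k$, so $\mu\bigl(\bigcup_{n=1}^\infty N_n\bigr)=0$, i.e.\ $\mu$ is null-continuous.
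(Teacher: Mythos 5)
Your proof is correct and follows essentially the same route as the paper's: the same construction $G_l=\bigcup_{n=k}^{k+l}E_n$ in (1), the same three-set sequence $A,B,\eset,\eset,\dots$ in (2), the same use of the continuous increasing local inverse of $\theta$ together with the continuity of $\lambda$ from below in (3), and the same specialization $E_n:=N_n$ in (4). The only (harmless) difference is organizational: in (3) you establish property~(C) for $\theta\circ\lambda$ by verifying the $\ep$--$\delta$ hypothesis of part (1) (with the $L<a$ versus $L=a$ case split), whereas the paper verifies Definition~\ref{C-pro} directly via the bound $\lambda\left(\bigcup_{n=k}^\infty E_n\right)\leq\theta^{-1}(\ep\wedge\theta(\delta))$, and your (p.g.p.) argument is the $\ep$--$\delta$ version of the paper's sequential one; both are equivalent.
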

\begin{proof}
(1)\ Let $\{E_n\}_\seqn\subset\calA$ be a sequence satisfying
\begin{equation}\label{eqn:ex-C-pro1}
\sup_\seql\mu\left(\bigcup_{n=k}^{k+l}E_n\right)\to 0.
\end{equation}
Let $\ep>0$ and take $\delta>0$ such that condition ($*$) holds for
every nondecreasing sequence $\{D_l\}_\seql\subset\calA$.
For this $\delta$, by~\eqref{eqn:ex-C-pro1} there is a $k_0\in\bbN$ such that
$\sup_\seql\mu\left(\bigcup_{n=k}^{k+l}E_n\right)<\delta$ for every $k\geq k_0$.
Fix $k\geq k_0$ and let $D_l:=\bigcup_{n=k}^{k+l}E_n$ for every $\seql$.
Then $\{D_l\}_\seql$ is nondecreasing and $\sup_\seql\mu(D_l)<\delta$.
Hence it follows from condition ($*$) that
\[
\mu\left(\bigcup_{n=k}^\infty E_n\right)=\mu\left(\bigcup_{l=1}^\infty D_l\right)<\ep,
\]
which implies $\mu\left(\bigcup_{n=k}^\infty E_n\right)\to 0$.
Thus $\mu$ has property~(C). The rest is easy to prove.
\par
(2)\ It suffices to show that $\mu$ has the (p.g.p.).
Let $\ep>0$ and take $\delta>0$ such that condition ($*$) holds
for every sequence $\{E_n\}_\seqn\subset\calA$.
Let $A,B\in\calA$ and assume that $\mu(A)\lor\mu(B)<\delta$.
Let $E_1:=A$, $E_2:=B$, and $E_n:=\eset$ for every $n\geq 3$.
Then $\sup_\seqn\mu(E_n)=\mu(A)\lor\mu(B)<\delta$,
hence $\mu(A\cup B)=\mu\left(\bigcup_{n=1}^\infty E_n\right)<\ep$
by condition ($*$).
Thus $\mu$ has the (p.g.p.). 
\par
(3)\ Let $\theta$ be continuous and increasing on $[0,\delta]$ for some $\delta>0$.
By~\cite[Theorem~2.5.3]{Friedman}, the function $\theta$ has a continuous and increasing
inverse $\theta^{-1}\colon [0,\theta(\delta)]\to [0,\delta]$.
Let $\{E_n\}_\seqn\subset\calA$ be a sequence
satisfying~\eqref{eqn:ex-C-pro1} for $\mu=\theta\circ\lambda$.
Let $\ep>0$. Then there is a $k_0\in\bbN$ such that for any $k\geq k_0$ it follows that
\[
\sup_\seql\theta\left(\lambda\left(\bigcup_{n=k}^{k+l}E_n\right)\right)<\ep\land\theta(\delta).
\]
Hence the continuity of $\lambda$ from below yields
\[
\lambda\left(\bigcup_{n=k}^\infty E_n\right)
=\lambda\left(\bigcup_{l=1}^\infty\bigcup_{n=k}^{k+l}E_n\right)
=\sup_\seql\lambda\left(\bigcup_{n=k}^{k+l}E_n\right)\leq\theta^{-1}(\ep\land\theta(\delta)).
\]
Thus
\[
\theta\circ\lambda\left(\bigcup_{n=k}^\infty E_n\right)\leq\ep\land\theta(\delta)\leq\ep,
\]
which implies $\theta\circ\lambda\left(\bigcup_{n=k}^\infty E_n\right)\to 0$.
Hence $\theta\circ\lambda$ has property~(C).
\par
Let $A_n,B_n\in\calA$ for every $\seqn$ and assume that
$\theta\circ\lambda(A_n)\lor\theta\circ\lambda(B_n)\to 0$.
Then $\lambda(A_n)\lor\lambda(B_n)\to 0$, so that $\lambda(A_n\cup B_n)\to 0$
since $\lambda$ has the (p.g.p.), and finally that $\theta\circ\lambda(A_n\cup B_n)\to 0$.
Therefore, $\theta\circ\lambda$ has the (p.g.p.).
\par
(4)\ Take a nondecreasing sequence $\{N_n\}_\seqn\subset\calA$ and assume that $\mu(N_n)=0$ for
every $\seqn$. Then
\[
\sup_\seql\mu\left(\bigcup_{n=k}^{k+l}N_n\right)=\sup_\seql\mu(N_{k+l})=0
\]
for every $\seql$, hence $\mu\left(\bigcup_{n=k}^\infty N_n\right)\to 0$ by property (C).
Thus, for any $\ep>0$, there is a $k_0\in\bbN$
such that $\mu\left(\bigcup_{n=k_0}^\infty N_n\right)<\ep$,
hence $\mu\left(\bigcup_{n=1}^\infty N_n\right)<\ep$.
Letting $\ep\downarrow 0$ yields $\mu\left(\bigcup_{n=1}^\infty N_n\right)=0$.
Therefore, $\mu$ is null-continuous.
\end{proof}
Our first issue is to extend assertion (3) of Theorem~\ref{base} to nonadditive measures
having property~(C) and the (p.g.p.).
Examples~\ref{counter2} and~\ref{counter3} below
give nonadditive measures that are not continuous from below,
but have property~(C) and the (p.g.p.), so that the following theorem
is a sharpened version of (3) of Theorem~\ref{base}.
Since the idea of the proof is the same as Theorem~1 of~\cite{J-S-W-K-L-Y},
only a sketch of the proof is given here for the convenience of the reader.
\begin{theorem}\label{comp}
Let $\mu\in\calM(X)$.
If $\mu$ has property~(C) and the (p.g.p.),
then any Cauchy in $\mu$-measure sequence $\{f_n\}_\seqn\subset\calF_0(X)$
has a subsequence converging $\mu$-almost uniformly.
\end{theorem}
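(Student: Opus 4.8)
The plan is to follow the classical route to a rapidly converging subsequence, using the (p.g.p.)~to build the relevant error‑control constants and property~(C) to pass from finite unions to the infinite tail. First I would use the (p.g.p.)~in the ``$\ep$--$\delta$'' form recorded just after its definition to construct a strictly decreasing sequence $\delta_0>\delta_1>\delta_2>\cdots$ with $\delta_k\downarrow 0$ such that
\[
\mu(A)\lor\mu(B)<\delta_{k+1}\ \Longrightarrow\ \mu(A\cup B)<\delta_k\qquad\text{for all }A,B\in\calA,\ k\ge 0 .
\]
(Take $\delta_0:=1$, and having chosen $\delta_k$, let $\delta_{k+1}$ be smaller than $\min\{2^{-k},\delta_k\}$ and than the $\delta$ produced by the (p.g.p.)~for $\ep=\delta_k$.) Since $\{f_n\}_\seqn$ is Cauchy in $\mu$-measure, for each $\seqk$ there is an $N_k\in\bbN$ with $\mu(\{|f_m-f_n|>2^{-k}\})<\delta_k$ whenever $m,n\ge N_k$; choosing $n_1<n_2<\cdots$ with $n_k\ge N_k$, and writing $g_k:=f_{n_k}$ and $A_k:=\bigl\{\,|g_{k+1}-g_k|>2^{-k}\,\bigr\}$, we obtain $\mu(A_k)<\delta_k$ for every $\seqk$.

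The crucial estimate is then a finite induction on $\seql$: adjoin $A_{k+l-1},A_{k+l-2},\dots,A_k$ to $A_{k+l}$ one set at a time. At each stage the partial union and the next set $A_j$ have $\mu$-measure $<\delta_j$ (the former by the inductive hypothesis, the latter because $\mu(A_j)<\delta_j$), so the displayed implication applies and strictly lowers the threshold index; after all of $A_{k+l-1},\dots,A_k$ have been adjoined this yields
\[
\mu\Bigl(\,\bigcup_{i=k}^{k+l}A_i\,\Bigr)<\delta_{k-1}\qquad(\seqk,\ k\ge 1,\ \seql).
\]
Consequently $\sup_\seql\mu\bigl(\bigcup_{i=k}^{k+l}A_i\bigr)\le\delta_{k-1}\to 0$ as $\kinfty$, and property~(C), applied to the sequence $\{A_k\}_\seqk$, gives $\mu\bigl(\bigcup_{i=k}^\infty A_i\bigr)\to 0$ as $\kinfty$.

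To finish, set $E_k:=\bigcup_{i=k}^\infty A_i$, so $\mu(E_k)\to 0$. For $x\notin E_k$ and $i\ge k$ we have $x\notin A_i$, hence $|g_{i+1}(x)-g_i(x)|\le 2^{-i}$, so $\{g_i\}_\seqi$ is uniformly Cauchy on $X\setminus E_k$. Since $E_1\supset E_2\supset\cdots$, the set $X_0:=\bigcup_{\seqk}(X\setminus E_k)$ belongs to $\calA$ and $g_i$ converges pointwise on $X_0$; define $f:=\lim_i g_i$ on $X_0$ and $f:=0$ on $X\setminus X_0$, so that $f\in\calF_0(X)$. Given $\ep>0$, pick $k_0$ with $\mu(E_{k_0})<\ep$; then $g_i\to f$ uniformly on $X\setminus E_{k_0}$, whence $\{f_{n_i}\}_\seqi$ converges $\mu$-almost uniformly.

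The main obstacle is the inductive estimate of the second paragraph: one must arrange the decreasing sequence $\{\delta_k\}$ and the thinning $\{n_k\}$ in exactly the right order so that the single two‑set implication supplied by the (p.g.p.)~can be iterated along a finite union of arbitrary length with the bound improving at each step. This is precisely the place where, in the absence of continuity from below, property~(C) takes over to convert the resulting uniform control of all finite tails into smallness of the infinite tail $\bigcup_{i=k}^\infty A_i$; the remaining steps (measurability of $f$ and the passage from uniform Cauchyness to $\mu$-almost uniform convergence) are routine and parallel the proof of Theorem~1 in \cite{J-S-W-K-L-Y}.
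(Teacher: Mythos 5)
Your proposal is correct and follows essentially the same route as the paper's (sketched) proof: the same (p.g.p.)-generated decreasing sequence $\{\delta_k\}$, the same thinning to a subsequence with $\mu(\{|f_{n_{k+1}}-f_{n_k}|>2^{-k}\})<\delta_k$, the same finite induction giving $\mu\bigl(\bigcup_{i=k}^{k+l}A_i\bigr)<\delta_{k-1}$, and the same application of property~(C) to control the infinite tails before extracting the almost uniform limit. The only differences are cosmetic (strict versus non-strict inequalities in the exceptional sets and the indexing of the $\delta_k$), and your write-up supplies the inductive details that the paper delegates to the proof of Theorem~1 of \cite{J-S-W-K-L-Y}.
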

\begin{proof}
Since $\mu$ has the (p.g.p.),
there is a decreasing sequence $\{\delta_i\}_\seqi$ such that
\begin{itemize}
\item $\delta_0=1/2$ and $0<\delta_i<\delta_{i-1}\land\frac{1}{2^i}$ for every $\seqi$,
\item $\mu(A\cup B)<\delta_{i-1}$ whenever $\seqi$, $A,B\in\calA$,
and $\mu(A)\lor\mu(B)<\delta_i$.
\end{itemize}
\par
Take a Cauchy in $\mu$-measure sequence $\{f_n\}_\seqn\subset\calF_0(X)$.
Then there is an increasing sequence $\{n_i\}_\seqi\subset\bbN$ such that
for any $\seqi$ and $\seqn$, if $n\geq n_i$ then
\begin{equation}\label{eqn:comp1}
\mu(\{|f_n-f_{n_i}|\geq 1/2^i\})<\delta_i.
\end{equation}
For each $i\in\bbN$, let $E_i:=\{|f_{n_{i+1}}-f_{n_i}|\geq 1/2^i\}$. Then
\begin{equation}\label{eqn:comp2}
\mu\left(\bigcup_{i=k}^{r+1}E_i\right)<\delta_{k-1}
\end{equation}
for every $r\in\bbN$ and $k\in\{1,2,\dots,r+1\}$.
For each $k\in\bbN$, let $A_k:=\bigcup_{i=k}^\infty E_i$
and $E:=\bigcap_{k=1}^\infty A_k$.
Then $\{f_{n_k}(x)\}_\seqk$ is a Cauchy sequence in $\bbR$ for every $x\not\in E$.
Therefore, the $\calA$-measurable function $f\colon X\to\bbR$ can be defined by
\[
f(x):=\begin{cases}
\lim\limits_{k\to\infty}f_{n_k}(x) & \mbox{if }x\not\in E,\\
\;0 & \mbox{otherwise}.
\end{cases}
\]
\par
Fix $\seqk$. For any $\seql$, let $r:=k+l-1$. Then \eqref{eqn:comp2} yields
\[
\mu\left(\bigcup_{i=k}^{k+l}E_i\right)=\mu\left(\bigcup_{i=k}^{r+1}E_i\right)<\delta_{k-1},
\]
so that
\[
\sup_\seql\mu\left(\bigcup_{i=k}^{k+l}E_i\right)\leq\delta_{k-1}.
\]
Letting $k\to\infty$ gives $\delta_{k-1}\to 0$, hence
\[
\sup_\seql\mu\left(\bigcup_{i=k}^{k+l}E_i\right)\to 0.
\]
Since $\mu$ has property~(C), if $k\to\infty$ then
\begin{equation}\label{eqn:comp3}
\mu(A_k)=\mu\left(\bigcup_{i=k}^\infty E_i\right)\to 0.
\end{equation}
\par
Let $\ep>0$. By~\eqref{eqn:comp3} there is a $k_0\in\bbN$ such that $\mu(A_{k_0})<\ep$.
Now it is routine work to show that $f_{n_k}$ converges to $f$ uniformly on $X\setminus A_{k_0}$.
\end{proof}
\begin{definition}\label{C0}
Let $\mu\in\calM(X)$. We say that $\mu$ has \emph{property~($\mbox{C}_0$)} if
for any sequence $\{E_n\}_\seqn\subset\calA$, 
it follows that $\mu\left(\bigcap_{k=1}^\infty\bigcup_{n=k}^\infty E_n\right)=0$
whenever $\sup_\seql\mu\left(\bigcup_{n=k}^{k+l}E_n\right)\to 0$.
\end{definition}
\par
By definition, property~(C) always implies property~($\mbox{C}_0$).
It is easy to see that both properties are equivalent if $\mu$ is strongly order continuous.
Furthermore, for every $r>0$, it follows that $\mu$ has property~(C) if and only if $\mu^r$
has property~(C). The same holds for property~($\mbox{C}_0$). 
The following corollary can be proved in the same reasoning as Theorem~\ref{comp}.
\begin{corollary}\label{AE}
Let $\mu\in\calM(X)$.
If $\mu$ has property~($\mbox{C}_0$) and the (p.g.p.),
then any Cauchy in $\mu$-measure sequence $\{f_n\}_\seqn\subset\calF_0(X)$
has a subsequence converging $\mu$-almost everywhere.
\end{corollary}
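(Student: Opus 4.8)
The plan is to run the argument of Theorem~\ref{comp} essentially unchanged and to substitute its single use of property~(C) by an appeal to property~($\mbox{C}_0$), which then yields $\mu$-almost everywhere convergence in place of $\mu$-almost uniform convergence. First I would use the (p.g.p.)\ to fix a decreasing sequence $\{\delta_i\}_\seqi$ with $\delta_0=1/2$, $0<\delta_i<\delta_{i-1}\land 1/2^i$ for every $\seqi$, and $\mu(A\cup B)<\delta_{i-1}$ whenever $A,B\in\calA$ and $\mu(A)\lor\mu(B)<\delta_i$. Given a Cauchy in $\mu$-measure sequence $\{f_n\}_\seqn\subset\calF_0(X)$, I would then extract an increasing sequence $\{n_i\}_\seqi\subset\bbN$ with $\mu(\{|f_n-f_{n_i}|\geq 1/2^i\})<\delta_i$ for all $n\geq n_i$, set $E_i:=\{|f_{n_{i+1}}-f_{n_i}|\geq 1/2^i\}$, and iterate the (p.g.p.)\ estimate to obtain $\mu\bigl(\bigcup_{i=k}^{r+1}E_i\bigr)<\delta_{k-1}$ for every $r\in\bbN$ and $k\in\{1,\dots,r+1\}$, exactly as in the proof of Theorem~\ref{comp}.

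Next I would define $A_k:=\bigcup_{i=k}^\infty E_i$ and $E:=\bigcap_{k=1}^\infty A_k$. For $x\notin E$ the bound $|f_{n_{i+1}}(x)-f_{n_i}(x)|<1/2^i$ holds for all sufficiently large $i$, so $\{f_{n_k}(x)\}_\seqk$ is Cauchy in $\bbR$, and the $\calA$-measurable limit $f$ can be defined by $f(x):=\lim_{k\to\infty}f_{n_k}(x)$ for $x\notin E$ and $f(x):=0$ otherwise. From the preceding estimate, for each fixed $\seqk$ one has $\sup_\seql\mu\bigl(\bigcup_{i=k}^{k+l}E_i\bigr)\leq\delta_{k-1}$, hence
\[
\sup_\seql\mu\Bigl(\bigcup_{i=k}^{k+l}E_i\Bigr)\longrightarrow 0\qquad(k\to\infty),
\]
which is precisely the hypothesis in Definition~\ref{C0} applied to the sequence $\{E_i\}_\seqi$.

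At this stage property~($\mbox{C}_0$) gives $\mu(E)=\mu\bigl(\bigcap_{k=1}^\infty\bigcup_{i=k}^\infty E_i\bigr)=0$ at once, and since $f_{n_k}(x)\to f(x)$ for every $x\notin E$ by construction, this says exactly that $f_{n_k}\to f$ $\mu$-a.e.; thus $\{f_{n_k}\}_\seqk$ is the required subsequence. I do not expect a genuine obstacle here: the only divergence from Theorem~\ref{comp} is that property~($\mbox{C}_0$) produces the single $\mu$-null set $E$ directly, so the final step there — choosing $k_0$ with $\mu(A_{k_0})<\ep$ and proving uniform convergence on $X\setminus A_{k_0}$, which relied on the stronger property~(C) to make $\mu(A_{k_0})$ small — is simply omitted. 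The one point deserving a moment's care is to confirm that $\sup_\seql\mu(\bigcup_{i=k}^{k+l}E_i)\to 0$ as $k\to\infty$ matches the indexing in the hypothesis of property~($\mbox{C}_0$), which it does.
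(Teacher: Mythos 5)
Your proposal is correct and is exactly the argument the paper intends: it states that Corollary~\ref{AE} "can be proved in the same reasoning as Theorem~\ref{comp}," and your adaptation — reusing the construction of $\{E_i\}$, $A_k$, and $E$, then invoking property~($\mbox{C}_0$) to get $\mu(E)=0$ directly instead of $\mu(A_k)\to 0$ — is precisely that reasoning.
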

Now, let us proceed to the Cauchy criterion for convergence in measure.
\begin{theorem}\label{fund}
Let $\mu\in\calM(X)$.
Assume that $\mu$ has property~(C) and the (p.g.p.). 
Then, for any sequence $\{f_n\}_\seqn\subset\calF_0(X)$
the following assertions are equivalent.
\begin{enumerate}
\item[\us{(i)}] $\{f_n\}_\seqn$ is Cauchy in $\mu$-measure.
\item[\us{(ii)}] $\{f_n\}_\seqn$ converges in $\mu$-measure.
\end{enumerate}
\end{theorem}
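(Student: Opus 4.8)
The implication (ii)$\Rightarrow$(i) is exactly assertion~(1) of Theorem~\ref{base}: since $\mu$ has the (p.g.p.), any sequence converging in $\mu$-measure is Cauchy in $\mu$-measure, so nothing new is needed here. The substance of the theorem is the converse (i)$\Rightarrow$(ii), and the plan is to combine Theorem~\ref{comp} with assertion~(2) of Theorem~\ref{base}.

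So suppose $\{f_n\}_\seqn\subset\calF_0(X)$ is Cauchy in $\mu$-measure. First I would invoke Theorem~\ref{comp}: since $\mu$ has property~(C) and the (p.g.p.), the sequence $\{f_n\}_\seqn$ has a subsequence $\{f_{n_k}\}_\seqk$ converging $\mu$-almost uniformly to some $f\in\calF_0(X)$. Next I would recall (as stated in Section~\ref{mode}) that $\mu$-almost uniform convergence implies convergence in $\mu$-measure to the same limit, so $f_{n_k}\mconv f$. Thus $\{f_n\}_\seqn$ is Cauchy in $\mu$-measure and has a subsequence converging in $\mu$-measure to $f\in\calF_0(X)$. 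Finally I would apply assertion~(2) of Theorem~\ref{base} — which again only requires the (p.g.p.) — to conclude $f_n\mconv f$; that is, $\{f_n\}_\seqn$ converges in $\mu$-measure, establishing (i)$\Rightarrow$(ii).

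In short, the whole argument is a two-line chaining of already-established facts, and there is no real obstacle: the genuine work has been front-loaded into Theorem~\ref{comp} (the extraction of an almost uniformly convergent subsequence under property~(C), replacing the continuity-from-below hypothesis of Theorem~\ref{base}(3)) and into the two (p.g.p.)-only assertions of Theorem~\ref{base}. If one wanted a version of the theorem asserting only the existence of a subsequence converging $\mu$-almost everywhere under the weaker property~($\mbox{C}_0$), the same scheme would go through verbatim with Corollary~\ref{AE} in place of Theorem~\ref{comp}, since $\mu$-almost everywhere convergence is not needed — only the in-$\mu$-measure convergence of the subsequence, which follows from $\mu$-almost everywhere convergence once $\mu$ is strongly order continuous; but property~(C) already gives the cleaner route through $\mu$-almost uniform convergence, so I would present the proof as above.
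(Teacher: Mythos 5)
Your proposal is correct and follows essentially the same route as the paper: the paper likewise derives (ii)$\Rightarrow$(i) from assertion (1) of Theorem~\ref{base} and derives (i)$\Rightarrow$(ii) by extracting, via Theorem~\ref{comp}, a subsequence converging $\mu$-almost uniformly (hence in $\mu$-measure) to some $f\in\calF_0(X)$. The only cosmetic difference is that the paper then writes out the $\ep$--$\delta$ argument showing the whole sequence converges to $f$ in $\mu$-measure, whereas you simply cite assertion (2) of Theorem~\ref{base}, which encapsulates exactly that argument and requires only the (p.g.p.).
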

\begin{proof}
(i)$\Rightarrow$(ii) By Theorem~\ref{comp},
$\{f_n\}_\seqn$ has a subsequence $\{f_{n_k}\}_\seqk$ and $f\in\calF_0(X)$
such that $f_{n_k}\mconv f$. Let $\ep>0$ and $\sigma>0$.
Since $\mu$ has the (p.g.p.), there is a $\delta>0$ such that
\begin{equation}\label{eqn:fund1}
\mu(A\cup B)<\ep
\end{equation}
whenever $A,B\in\calA$ and $\mu(A)\lor\mu(B)<\delta$.
Furthermore, $\{f_n\}_\seqn$ being Cauchy in $\mu$-measure, there is an $n_0\in\bbN$ such that
for any $m,n\in\bbN$, if $m,n\geq n_0$ then
\begin{equation}\label{eqn:fund2}
\mu(\{|f_m-f_n|>\sigma/2\})<\delta.
\end{equation}
Since $f_{n_k}\mconv f$, there is a $k_0\in\bbN$ such that $n_{k_0}\geq n_0$ and
\begin{equation}\label{eqn:fund3}
\mu(\{|f_{n_{k_0}}-f|>\sigma/2\})<\delta.
\end{equation}
Therefore, it follows from \eqref{eqn:fund1}--\eqref{eqn:fund3} that if $n\geq n_0$ then
\[
\mu(\{|f_n-f|>\sigma\})\leq\mu(\{|f_n-f_{n_{k_0}}|>\sigma/2\}\cup
\{|f_{n_{k_0}}-f|>\sigma/2\})<\ep,
\]
which implies $f_n\mconv f$.
\par
(ii)$\Rightarrow$(i) It follows from (1) of Theorem~\ref{base}; see also~\cite[Theorem~7.2]{L-M-P-K}.
\end{proof}
Next we give a structural characteristic of the conjunction of property~(C) and the (p.g.p.).
The following technical lemma is prepared for this purpose and for later use.
\begin{lemma}\label{au}
Let $\mu\in\calM(X)$. Assume that $\mu$ is null-continuous and has the (p.g.p.).
Let $\{f_n\}_\seqn\subset\calF_0(X)$ and $f,g\in\calF_0(X)$.
If $f_n\mconv f$ and $f_n\to g$ $\mu$-a.u., then $f=g$ $\mu$-a.e.~and $f_n\to f$ $\mu$-a.u.
\end{lemma}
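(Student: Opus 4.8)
The plan is to show the two conclusions separately, using the almost uniform convergence to build a single exceptional set. First I would unpack $f_n \to g$ $\mu$-a.u.: for each $j \in \bbN$ there is $E_j \in \calA$ with $\mu(E_j) < 1/j$ such that $f_n \to g$ uniformly on $X \setminus E_j$. On $X \setminus E_j$ the sequence $\{f_n\}_\seqn$ converges pointwise to $g$, and pointwise convergence on a set implies convergence in $\mu$-measure relative to that behavior — more precisely, for fixed $\ep > 0$ the sets $\{|f_n - g| > \ep\} \setminus E_j$ shrink and, combined with the single set $E_j$ of small measure, the (p.g.p.) lets me conclude $f_n \mconv g$. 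Indeed, since uniform convergence on $X \setminus E_j$ gives $\{|f_n - g| > \ep\} \subset E_j$ for $n$ large, so $\mu(\{|f_n - g| > \ep\}) < 1/j$ eventually; letting $j \to \infty$ shows $\mu(\{|f_n - g| > \ep\}) \to 0$, i.e. $f_n \mconv g$. (This step does not even need the hypotheses beyond measurability of $g$.)

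Now both $f_n \mconv f$ and $f_n \mconv g$. Fix $\ep > 0$ and $\sigma > 0$. By the (p.g.p.) choose $\delta > 0$ with $\mu(A \cup B) < \ep$ whenever $\mu(A) \vee \mu(B) < \delta$. For $n$ large, $\mu(\{|f_n - f| > \sigma/2\}) < \delta$ and $\mu(\{|f_n - g| > \sigma/2\}) < \delta$, and since $\{|f - g| > \sigma\} \subset \{|f_n - f| > \sigma/2\} \cup \{|f_n - g| > \sigma/2\}$ we get $\mu(\{|f - g| > \sigma\}) < \ep$; as $\ep$ is arbitrary, $\mu(\{|f - g| > \sigma\}) = 0$ for every $\sigma > 0$. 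To upgrade this to $f = g$ $\mu$-a.e., write $N := \{f \neq g\} = \bigcup_{m=1}^\infty \{|f - g| > 1/m\}$, a nondecreasing union (after replacing each term by $\bigcup_{i \le m}$, which is anyway already nested here) of $\mu$-null sets; null-continuity then forces $\mu(N) = 0$. This is exactly where the null-continuity hypothesis is used, and I expect this small gluing argument to be the only genuinely substantive point — everything else is bookkeeping.

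Finally, $f_n \to f$ $\mu$-a.u.: given $\ep > 0$, pick $E_\ep \in \calA$ with $\mu(E_\ep) < \ep/2$ (say) such that $f_n \to g$ uniformly on $X \setminus E_\ep$. Since $f = g$ off the null set $N$, on the set $X \setminus (E_\ep \cup N)$ we have $f_n \to g = f$ uniformly. It remains to control $\mu(E_\ep \cup N)$: because $\mu(N) = 0$ and $\mu(E_\ep) < \ep/2$, the (p.g.p.) (or just null-additivity, which follows from the (p.g.p.)) gives $\mu(E_\ep \cup N) = \mu(E_\ep) < \ep$. Hence the set $E_\ep' := E_\ep \cup N$ witnesses $f_n \to f$ $\mu$-a.u. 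The main obstacle, such as it is, is organizing the interplay of the three auxiliary null/small sets and making sure each use of the (p.g.p.) and of null-continuity is legitimate; no deep estimate is involved.
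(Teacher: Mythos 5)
Your proof is correct and follows essentially the same route as the paper: deduce $f_n\mconv g$ from the almost uniform convergence, use the (p.g.p.) to get $\mu(\{|f-g|>c\})=0$ for every $c>0$, invoke null-continuity to conclude $\mu(\{f\neq g\})=0$, and then enlarge the exceptional sets by this null set. One small inaccuracy: the (p.g.p.) yields only \emph{weak} null-additivity, not null-additivity, so the equality $\mu(E_\ep\cup N)=\mu(E_\ep)$ is not justified as stated; the correct fix (and what the paper effectively does) is to first take the $\delta>0$ furnished by the (p.g.p.) for the target $\ep$ and then choose $E$ with $\mu(E)<\delta$, which gives $\mu(E\cup N)<\ep$ since $\mu(N)=0<\delta$.
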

\begin{proof}
Let $N:=\{|f-g|>0\}$. Since $f_n\mconv f$ and $f_n\mconv g$, the (p.g.p.)~of $\mu$
implies that $\mu(\{|f-g|>c\})=0$ for every $c>0$.
Therefore, $\mu(N)=0$ by the null-continuity of $\mu$, hence $f=g$ $\mu$-a.e.
\par
Next we prove that $f_n\to f$ $\mu$-a.u.
Since $f_n\to g$ $\mu$-a.u., there exists $\{E_k\}_\seqk\subset\calA$ such that
$\mu(E_k)\to 0$ and $\sup_{x\not\in E_k}|f_n(x)-g(x)|\to 0$ for every $\seqk$.
For each $\seqk$, let $D_k:=E_k\cup N$. Then $\mu(D_k)\to 0$
since $\mu$ has the (p.g.p.). Furthermore, for any $\seqk$, letting $\ninfty$ gives
\[
\sup_{x\not\in D_k}|f_n(x)-f(x)|\leq\sup_{x\not\in E_k}|f_n(x)-g(x)|\to 0,
\]
which implies that $f_n\to f$ $\mu$-a.u. 
\end{proof}
\begin{proposition}\label{structure}
Let $\mu\in\calM(X)$. Assume that $\mu$ has property~(C) and the (p.g.p.).
Then $\mu$ has property~($\mbox{S}_1$), hence
it is null-continuous and has property~(S). 
\end{proposition}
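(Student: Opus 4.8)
The plan is to show directly that $\mu$ has property~($\mbox{S}_1$), since the remaining conclusions then follow from facts already recorded in Section~\ref{measure}: property~($\mbox{S}_1$) always implies property~(S), and the null-continuity of $\mu$ follows from property~(C) by Proposition~\ref{ex-C-pro}(4) (alternatively from property~(S) via \cite[Proposition~3.1]{U-M}). So the entire burden is the implication ``property~(C) + (p.g.p.) $\Rightarrow$ property~($\mbox{S}_1$)''.

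To prove this, start with an arbitrary sequence $\{A_n\}_\seqn\subset\calA$ with $\mu(A_n)\to 0$, and produce the required subsequence by the same quantitative bootstrapping used in the proof of Theorem~\ref{comp}. First invoke the (p.g.p.) to fix a decreasing sequence $\{\delta_i\}_{\seqi}$ with $\delta_0 = 1/2$, $0<\delta_i<\delta_{i-1}\wedge 2^{-i}$, and the property that $\mu(E\cup F)<\delta_{i-1}$ whenever $\mu(E)\vee\mu(F)<\delta_i$. By induction this ``two-set'' control self-improves to a finite-union bound: if $\mu(E_1),\dots,\mu(E_m)<\delta_{i+m-1}$ then $\mu\bigl(\bigcup_{j=1}^m E_j\bigr)<\delta_{i-1}$ (combine the unions pairwise in a balanced binary tree, using one step of $\delta_{j}\mapsto\delta_{j-1}$ per level; here one should be slightly careful with the bookkeeping of indices, but the point is that $m$ sets each of measure below $\delta_{k}$ for $k$ large give a union below any prescribed $\delta_{i-1}$). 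Since $\mu(A_n)\to 0$, choose an increasing sequence $\{n_k\}_\seqk$ with $\mu(A_{n_k})<\delta_{k}$ for every $k$, and put $E_k := A_{n_k}$. For fixed $k$ and any $l\in\bbN$, the sets $E_k,\dots,E_{k+l}$ all have measure below $\delta_k$, hence below $\delta_{k+1},\dots$ appropriately, so the finite-union bound gives $\mu\bigl(\bigcup_{i=k}^{k+l}E_i\bigr)<\delta_{k-1}$ uniformly in $l$; therefore $\sup_l\mu\bigl(\bigcup_{i=k}^{k+l}E_i\bigr)\le\delta_{k-1}\to 0$ as $k\to\infty$.

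Now property~(C) applies to $\{E_k\}_\seqk$: it yields $\mu\bigl(\bigcup_{i=k}^\infty E_i\bigr)\to 0$ as $k\to\infty$. But $\bigcup_{i=k}^\infty E_i=\bigcup_{i=k}^\infty A_{n_i}$, so we have exhibited a subsequence $\{A_{n_k}\}_\seqk$ of $\{A_n\}_\seqn$ with $\mu\bigl(\bigcup_{i=k}^\infty A_{n_i}\bigr)\to 0$, which is precisely property~($\mbox{S}_1$). The main obstacle — and the only place where real care is needed — is the inductive passage from the (p.g.p.)'s two-set estimate to the uniform finite-union estimate with the correct indexing of the $\delta_i$'s; this is exactly the mechanism already exploited in the proof of Theorem~\ref{comp} (see \eqref{eqn:comp2}), so in the write-up I would lean on that computation rather than reproduce it, remarking that the construction of $\{\delta_i\}_\seqi$ and $\{n_k\}_\seqk$ here is identical and that \eqref{eqn:comp2} shows $\mu\bigl(\bigcup_{i=k}^{k+l}E_i\bigr)<\delta_{k-1}$ for all $l$. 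The rest (property~(S) and null-continuity) is then quoted from Section~\ref{measure}.
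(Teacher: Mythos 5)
Your proof is correct, but it takes a genuinely different route from the paper's. The paper argues by contradiction at the level of measurable functions: it invokes the characterization of property~($\mbox{S}_1$) from \cite[Theorem~4]{L-L} to produce a sequence $\{f_n\}_\seqn$ converging in $\mu$-measure with no $\mu$-a.u.\ convergent subsequence, then uses Theorems~\ref{comp} and~\ref{fund} together with Lemma~\ref{au} (and the null-continuity from (4) of Proposition~\ref{ex-C-pro}) to manufacture such a subsequence anyway. Your argument stays entirely at the level of sets: you extract a subsequence with $\mu(A_{n_k})<\delta_k$ for the (p.g.p.)-generated scale $\{\delta_i\}_\seqi$, run the telescoping estimate of \eqref{eqn:comp2} to verify the hypothesis $\sup_\seql\mu\bigl(\bigcup_{i=k}^{k+l}E_i\bigr)\leq\delta_{k-1}\to 0$ of property~(C), and read off property~($\mbox{S}_1$) directly from its conclusion. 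This is more elementary and self-contained --- it needs neither the external characterization from \cite{L-L} nor Theorem~\ref{fund} nor Lemma~\ref{au} --- and it makes transparent that property~(C) applied to tails of a sparse subsequence is exactly what property~($\mbox{S}_1$) asks for; the paper's proof is shorter only because the functional machinery has already been built. Two cosmetic remarks: the finite-union bound is obtained by a linear backward induction (append $E_j$ to the union of $E_{j+1},\dots,E_{k+l}$, dropping one index per step), not a balanced binary tree, and the version you actually need is the one with the graded bounds $\mu(E_i)<\delta_i$ rather than a uniform bound --- but since you ultimately defer to the computation in \eqref{eqn:comp2}, which is precisely that graded version, there is no gap.
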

\begin{proof}
Suppose, contrary to our claim, that $\mu$ does not have property ($\mbox{S}_1$).
Then, by~\cite[Theorem~4]{L-L} there exist $\{f_n\}_\seqn\subset\calF_0(X)$
and $f\in\calF_0(X)$ such that $f_n\mconv f$ and that
$\{f_n\}_\seqn$ does not have any subsequence converging $\mu$-almost uniformly to $f$.
Nevertheless,
Theorems~\ref{comp} and~\ref{fund} imply that $\{f_n\}_\seqn$ has a subsequence
$\{f_{n_k}\}_\seqk$ and $g\in\calF_0(X)$ such that $f_{n_k}\to g$ $\mu$-a.u.
Since $\mu$ is null-continuous by (4) of Proposition~\ref{ex-C-pro},
it follows from Lemma~\ref{au} that $f_{n_k}\to f$ $\mu$-a.u., which is impossible.
Hence $\mu$ has property~($\mbox{S}_1$).
See Subsection~\ref{measure} for the fact that $\mu$ has property~(S).
\end{proof}
As a supplementary result of Theorem~\ref{fund},
the notion of Cauchy in $\mu$-measure can be characterized by the convergence of subsequences.
\begin{corollary}\label{subseq}
Let $\mu\in\calM(X)$. Let $\{f_n\}_\seqn\subset\calF_0(X)$.
Assume that $\mu$ has property~(C) and the (p.g.p.).
Then the following assertions are equivalent.
\begin{enumerate}
\item[\us{(i)}] $\{f_n\}_\seqn$ is Cauchy in $\mu$-measure.
\item[\us{(ii)}] There is a function $f\in\calF_0(X)$ such that
any subsequence of $\{f_n\}_\seqn$ has a
further subsequence converging $\mu$-almost uniformly to $f$.
\item[\us{(iii)}] There is a function $f\in\calF_0(X)$ such that
any subsequence of $\{f_n\}_\seqn$ has a
further subsequence converging in $\mu$-measure to $f$.
\end{enumerate}
\end{corollary}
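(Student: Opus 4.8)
\section*{Proof proposal}

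The plan is to establish the cyclic chain of implications (i)$\Rightarrow$(ii)$\Rightarrow$(iii)$\Rightarrow$(i), using Theorems~\ref{comp} and~\ref{fund} together with Lemma~\ref{au} as the main tools. Observe first that under the present hypotheses $\mu$ is null-continuous by~(4) of Proposition~\ref{ex-C-pro}, so Lemma~\ref{au} is indeed applicable.

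For (i)$\Rightarrow$(ii), I would start from a Cauchy in $\mu$-measure sequence $\{f_n\}_\seqn$ and invoke Theorem~\ref{fund} to obtain an $f\in\calF_0(X)$ with $f_n\mconv f$; this $f$ is the function claimed in~(ii). Given an arbitrary subsequence $\{f_{n_k}\}_\seqk$, it is itself Cauchy in $\mu$-measure (immediately from the definition) and still satisfies $f_{n_k}\mconv f$. Theorem~\ref{comp} then yields a further subsequence $\{f_{n_{k_j}}\}_\seqj$ converging $\mu$-almost uniformly to some $g\in\calF_0(X)$. Since $\{f_{n_{k_j}}\}_\seqj$ converges in $\mu$-measure to $f$ and $\mu$-almost uniformly to $g$, Lemma~\ref{au} forces $f=g$ $\mu$-a.e.~and, more to the point, $f_{n_{k_j}}\to f$ $\mu$-a.u. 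Hence the same $f$ serves every subsequence, which is assertion~(ii).

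The implication (ii)$\Rightarrow$(iii) is immediate, since $\mu$-almost uniform convergence entails convergence in $\mu$-measure to the same limit. For (iii)$\Rightarrow$(i), I would first show that the whole sequence converges in $\mu$-measure to the $f$ furnished by~(iii), via the familiar ``subsequence of a subsequence'' argument: if $f_n\not\mconv f$, there are $\ep>0$ and $\delta>0$ and a subsequence $\{f_{n_k}\}_\seqk$ with $\mu(\{|f_{n_k}-f|>\ep\})\geq\delta$ for every $\seqk$; then every further subsequence of $\{f_{n_k}\}_\seqk$ inherits this inequality and so cannot converge in $\mu$-measure to $f$, contradicting~(iii). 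Thus $f_n\mconv f$, and then $\{f_n\}_\seqn$ is Cauchy in $\mu$-measure by~(1) of Theorem~\ref{base} (equivalently, by Theorem~\ref{fund}).

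The only genuinely delicate point is the limit identification inside (i)$\Rightarrow$(ii): Theorem~\ref{comp} produces the further subsequence together with an a priori unrelated $\mu$-almost uniform limit $g$, and one must upgrade ``$f_{n_{k_j}}\to g$ $\mu$-a.u.'' to ``$f_{n_{k_j}}\to f$ $\mu$-a.u.''. The (p.g.p.)~alone gives only $\mu(\{|f-g|>c\})=0$ for every $c>0$; promoting this to $\mu(\{f\ne g\})=0$, and thereby to $\mu$-almost uniform convergence to the prescribed $f$, is exactly where the null-continuity of $\mu$ enters through Lemma~\ref{au}. That is the step I would treat with the most care; everything else is bookkeeping on subsequences.
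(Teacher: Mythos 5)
Your proof is correct, and the overall cycle (i)$\Rightarrow$(ii)$\Rightarrow$(iii)$\Rightarrow$(i) matches the paper's; the (ii)$\Rightarrow$(iii) and (iii)$\Rightarrow$(i) steps are essentially identical to the published argument (the paper phrases the last step positively via the real-sequence subsequence principle applied to $a_n:=\mu(\{|f_n-f|>\ep\})$, you phrase it as a contradiction — same content). Where you genuinely diverge is (i)$\Rightarrow$(ii): the paper first establishes via Proposition~\ref{structure} that $\mu$ has property~($\mbox{S}_1$) and then quotes the external result \cite[Theorem~2]{L-L}, which says that under property~($\mbox{S}_1$) every sequence converging in $\mu$-measure to $f$ has, for each subsequence, a further subsequence converging $\mu$-almost uniformly to $f$. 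You instead observe that each subsequence is itself Cauchy in $\mu$-measure, apply Theorem~\ref{comp} to extract a $\mu$-a.u.\ convergent further subsequence with some limit $g$, and then use Lemma~\ref{au} (legitimately, since null-continuity follows from property~(C) by (4) of Proposition~\ref{ex-C-pro}) to identify $g$ with $f$ and upgrade the convergence to $\mu$-a.u.\ convergence to $f$. This is more self-contained — it avoids the citation to \cite{L-L} and the detour through property~($\mbox{S}_1$) — and in fact it replays internally the very argument the paper uses to prove Proposition~\ref{structure}. The paper's route buys a slightly shorter proof at the cost of an external reference; yours makes the limit-identification step, which you correctly single out as the delicate point, fully explicit.
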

\begin{proof}
Observe that $\mu$ has property~($\mbox{S}_1$) by Proposition~\ref{structure}.
\par
(i)$\Rightarrow$(ii)\ Since $\{f_n\}_\seqn$ is Cauchy in $\mu$-measure,
by Theorem~\ref{fund} there is a function $f\in\calF_0(X)$ such that $f_n\mconv f$.
Assertion (ii) thus follows from~\cite[Theorem~2]{L-L}.
\par
(ii)$\Rightarrow$(iii)\ It is obvious.
\par
(iii)$\Rightarrow$(i)\ Let $\ep>0$. For each $\seqn$, let $a_n:=\mu(\{|f_n-f|>\ep\})$.
Take any subsequence $\{a_{n_k}\}_\seqk$ of $\{a_n\}_\seqn$.
Then, $\{f_{n_k}\}_\seqk$ being a subsequence of $\{f_n\}_\seqn$,
it follows from assertion (iii) that
$\{f_{n_k}\}_\seqk$ has a further subsequence $\{f_{n_{k_i}}\}_\seqi$ converging
in $\mu$-measure to $f$, hence $a_{n_{k_i}}=\mu(\{|f_{n_{k_i}}-f|>\ep\})\to 0$.
Therefore, we have $a_n\to 0$, that is, $f_n\mconv f$.
Since $\mu$ has the (p.g.p.), the sequence $\{f_n\}_\seqn$ is Cauchy in $\mu$-measure
by (1) of Theorem~\ref{base}.
\end{proof}
As the following proposition shows, property~(C) cannot be dropped in
Theorems~\ref{comp} and~\ref{fund}, Proposition~\ref{structure},
and Corollary~\ref{subseq}.
\begin{proposition}\label{counter1}
Let $X:=\bbN$ and $\calA:=2^\bbN$.
Let $\mu\colon\calA\to [0,2]$ be the nonadditive measure defined by
\[
\mu(A):=\begin{cases}
0 & \mbox{if }A=\eset,\\
\sum_{i\in A}1/2^i & \mbox{if $A$ is a nonempty finite subset of $\bbN$},\\[1mm]
1+\sum_{i\in A}1/2^i & \mbox{if $A$ is an infinite subset of $\bbN$}.
\end{cases}
\]
\begin{enumerate}
\item[\us{(1)}] $\mu$ is subadditive, null-continuous, and has the (p.g.p.)~and property~(S).
\item[\us{(2)}] $\mu$ is neither continuous from below nor order continuous.
\item[\us{(3)}] $\mu$ has neither property~(C) nor property~($S_1$). 
\item[\us{(4)}] For each $\seqn$, let $A_n:=\{1,2,\dots,n\}$ and $f_n:=\chi_{A_n}$.
Then the sequence $\{f_n\}_\seqn\subset\calF_0(X)$
is Cauchy in $\mu$-measure, but it neither converges in $\mu$-measure
nor has a subsequence converging in $\mu$-measure.
\item[\us{(5)}] For each $\seqn$, let $A_n:=\{n\}^c$ and $f_n:=\chi_{A_n}$.
Then the sequence $\{f_n\}_\seqn\subset\calF_0(X)$ converges
in $\mu$-measure to the constant function $1$
and $\{f_n\}_\seqn$ is Cauchy in $\mu$-measure,
but it does not have any subsequence converging $\mu$-almost uniformly.
\end{enumerate}
\end{proposition}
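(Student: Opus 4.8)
The plan is to read everything off the decomposition $\mu=\nu+\iota$, where $\nu(A):=\sum_{i\in A}2^{-i}$ is a finite $\sigma$-additive measure on $2^\bbN$ with $\nu(\bbN)=1$, and $\iota(A):=1$ if $A$ is infinite and $\iota(A):=0$ otherwise. Three elementary facts will do all the work: (a) $\mu(A)=0$ exactly when $A=\eset$; (b) if $\mu(A)<1$ then $A$ is finite and $\mu(A)=\nu(A)$, whereas $\mu(A)\ge 1$ for every infinite $A$; and (c) if $\mu(A_n)\to 0$, then the $A_n$ are eventually finite with $\nu(A_n)\to 0$, so, since $j\in A_n$ forces $\nu(A_n)\ge 2^{-j}$, every fixed $j\in\bbN$ lies in only finitely many of the $A_n$.

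\emph{Assertions (1) and (2).} Monotonicity and $\mu(\eset)=0$ are immediate from the formula, and subadditivity follows from $\nu(A\cup B)\le\nu(A)+\nu(B)$ together with $\iota(A\cup B)\le\iota(A)+\iota(B)$ (an infinite union has an infinite summand); hence, by the implications recorded in Subsection~\ref{measure} (subadditive $\Rightarrow$ relaxed subadditive $\Rightarrow$ (p.g.p.)), $\mu$ has the (p.g.p.). Null-continuity is immediate from (a): a nondecreasing sequence of $\mu$-null sets consists of copies of $\eset$. For property~(S), given $\mu(A_n)\to 0$, use (c) to pick an increasing $\{n_k\}_\seqk$ with $A_{n_k}\subset\{k,k+1,\dots\}$; then $\bigcup_{k\ge i}A_{n_k}\subset\{i,i+1,\dots\}$, so $\bigcap_i\bigcup_{k\ge i}A_{n_k}=\eset$, which has $\mu$-value $0$. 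For (2), continuity from below fails on $A_n:=\{1,\dots,n\}\uparrow\bbN$ since $\mu(A_n)=1-2^{-n}\to 1\ne 2=\mu(\bbN)$, and order continuity fails on $A_n:=\{n,n+1,\dots\}\downarrow\eset$ since $\mu(A_n)=1+2^{1-n}\to 1\ne 0$.

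\emph{Assertions (3) and (4).} For $A_n:=\{n\}$ we have $\mu(A_n)=2^{-n}\to 0$, yet $\bigcup_{k\ge i}A_{n_k}$ is infinite for every subsequence and so has $\mu$-value $>1$; thus $\mu$ lacks property~($\mbox{S}_1$), and then it also lacks property~(C), since property~(C) together with the (p.g.p.)~from~(1) would yield property~($\mbox{S}_1$) by Proposition~\ref{structure} (alternatively, for $E_n:=\{n\}$ one has $\sup_\seql\mu\bigl(\bigcup_{n=k}^{k+l}E_n\bigr)\le 2^{1-k}\to 0$ while $\mu\bigl(\bigcup_{n=k}^\infty E_n\bigr)=1+2^{1-k}$). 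For (4), with $f_n:=\chi_{\{1,\dots,n\}}$ and $\ep\in(0,1)$ one has $\{|f_m-f_n|>\ep\}=\{n+1,\dots,m\}$ for $m>n$, of $\mu$-value $\le 2^{-n}$, so $\{f_n\}_\seqn$ is Cauchy in $\mu$-measure. If some subsequence satisfied $f_{n_k}\mconv f$, then fixing $\ep\in(0,1)$, by (b)--(c) the set $\{|f_{n_k}-f|>\ep\}$ is eventually finite with $\nu\to 0$, so each $x\in\bbN$ eventually leaves it; since $f_{n_k}(x)=1$ once $n_k\ge x$, this forces $|f(x)-1|\le\ep$ for every $x$, i.e.\ $f\equiv 1$, whereas $\{|f_{n_k}-1|>\ep\}=\{n_k+1,n_k+2,\dots\}$ has $\mu$-value $1+2^{-n_k}\not\to 0$ --- a contradiction. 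Hence neither $\{f_n\}_\seqn$ nor any of its subsequences converges in $\mu$-measure.

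\emph{Assertion (5).} Take $f_n:=\chi_{\{n\}^c}$; then $\{|f_n-1|>\ep\}=\{n\}$ for $\ep\in(0,1)$, so $f_n\mconv 1$, and $\{f_n\}_\seqn$ is Cauchy in $\mu$-measure by (1) of Theorem~\ref{base}. If a subsequence $\{f_{n_k}\}_\seqk$ converged $\mu$-almost uniformly, then, $\mu$ being null-continuous and having the (p.g.p.), Lemma~\ref{au} would give $f_{n_k}\to 1$ $\mu$-a.u.; but any $E\in\calA$ with $\mu(E)<1$ is finite, so $n_k\notin E$ for large $k$ and $\sup_{x\notin E}|f_{n_k}(x)-1|\ge|f_{n_k}(n_k)-1|=1$, contradicting uniform convergence on $X\setminus E$. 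The main obstacle is the non-convergence part of~(4): one must exclude \emph{every} candidate limit, and it is precisely facts (a)--(c) --- that a set of small $\mu$-measure is at once finite and $\nu$-small, so fixed integers are eventually excluded from it --- that pin the only possible limit to the constant $1$ and then rule it out; all the remaining items are bookkeeping with the explicit formula for $\mu$.
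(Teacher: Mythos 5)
Your proof is correct and follows essentially the same route as the paper's: the same witness sequences, the same subsequence extraction for property~(S), and the same contradiction arguments for (4) and (5); the decomposition $\mu=\nu+\iota$ is just a convenient repackaging of the paper's case analysis. The only substantive variation is in (4), where you run the non-convergence argument directly on an arbitrary subsequence, whereas the paper first rules out convergence of the full sequence and then invokes (2) of Theorem~\ref{base} to exclude convergent subsequences --- both are valid.
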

\begin{proof}
(1)\ We only show that $\mu$ has property~(S). The rest is easy to prove.
Take $\{A_n\}_\seqn\subset\calA$ and assume that $\mu(A_n)\to 0$.
Then there is an $n_1\in\bbN$ such that $\mu(A_{n_1})<1/2$,
hence $A_{n_1}\subset\{2,3,\dots\}$.
Again by $\mu(A_n)\to 0$, there is an $n_2\in\bbN$ such that $n_2>n_1$
and $\mu(A_{n_2})<1/2^2$,
hence $A_{n_2}\subset\{3,4,\dots\}$.
We continue in this fashion to obtain a subsequence $\{A_{n_i}\}_\seqi$
such that $A_{n_i}\subset\{i+1,i+2,\dots\}$ for every $\seqi$.
Then $\mu$ has property~(S) since it follows that
\[
\bigcap_{k=1}^\infty\bigcup_{i=k}^\infty A_{n_i}
\subset\bigcap_{k=1}^\infty\{k+1,k+2,\dots\}=\eset.
\]
\par
(2) For each $\seqn$, let $A_n:=\{1,2,\dots,n\}$.
Then $A_n\uparrow\bbN$ and $\mu(A_n)\to 1$, but $\mu(\bbN)=2$.
Hence $\mu$ is not continuous from below.
Next, for each $\seqn$, let $A_n:=\{n,n+1,\dots\}$. Then $A_n\downarrow\eset$,
but $\mu(A_n)>1$ for every $\seqn$. Hence $\mu$ is not order continuous.
\par 
(3) For each $\seqn$, let $A_n:=\{n\}$. Then, letting $k\to\infty$ shows
\[
\sup_{\seql}\mu\left(\bigcup_{n=k}^{k+l}A_n\right)=\sum_{i=k}^\infty\frac{1}{2^i}\to 0,
\]
but
\[
\mu\left(\bigcup_{n=k}^\infty A_n\right)=1+\sum_{i=k}^\infty\frac{1}{2^i}\to 1.
\]
Hence $\mu$ does not have property~(C).
Furthermore, $\mu(A_n)=1/2^n\to 0$,
but for any subsequence $\{A_{n_k}\}_\seqk$ of $\{A_n\}_\seqn$ it follows that
\[
\mu\left(\bigcup_{i=k}^\infty A_{n_i}\right)=\mu(\{n_k,n_{k+1},\dots\})>1
\]
for every $\seqk$. Therefore, $\mu$ does not have property~($\mbox{S}_1$).
\par
(4)\ Let $\ep>0$ and find  $n_0\in\bbN$ such that for any $n,l\in\bbN$, if $n\geq n_0$ then
$\sum_{i=n+1}^{n+l}1/2^i<\ep$. Then
\begin{align*}
\mu(\{|f_{n+l}-f_n|>\ep\})
&\leq\mu(A_{n+l}\setminus A_n)\\
&=\mu(\{n+1,\dots,n+l\})\\
&=\sum_{i=n+1}^{n+l}\frac{1}{2^i}<\ep,
\end{align*}
which means that $\{f_n\}_\seqn$ is Cauchy in $\mu$-measure.
\par
Next we prove that $\{f_n\}_\seqn$ does not converge in $\mu$-measure.
Suppose, contrary to our claim, that there is an $f\in\calF_0(X)$ such that $f_n\mconv f$.
If there were $n_0\in X$ such that $f(n_0)\ne 1$, then for any $n\geq n_0$
we would have $n_0\in\{|f_n-f|>\ep_0\}$,  where $\ep_0:=|1-f(n_0)|/2>0$, hence
\[
\mu(\{|f_n-f|>\ep_0\})\geq\frac{1}{2^{n_0}}>0,
\]
which contradicts our assumption. Hence $f(x)=1$ for every $x\in X$.
Therefore, letting $\ninfty$ shows
\[
\mu(\{|f_n-f|>1/2\})=\mu(\{n+1,n+2,\dots\})=1+\sum_{i=n+1}^\infty\frac{1}{2^i}\to 1,
\]
which contradicts the fact that $f_n\mconv f$.
\par
Finally, suppose that $\{f_n\}_\seqn$ has a subsequence $\{f_{n_k}\}_\seqk$ converging
in $\mu$-measure to a function $f\in\calF_0(X)$.
Since $\mu$ has the (p.g.p.)~and $\{f_n\}_\seqn$ is Cauchy in $\mu$-measure,
it follows from (2) of Theorem~\ref{base}
that $f_n\mconv f$, which contradicts the fact
that $\{f_n\}_\seqn$ does not converge in $\mu$-measure.
\par
(5)\ For any $\ep>0$ and $\seqn$, it follows that
\[
\mu(\{|f_n-1|>\ep\})\leq\mu(A_n^c)=\frac{1}{2^n},
\]
hence $f_n\mconv 1$. Since $\mu$ has the (p.g.p.),
$\{f_n\}_\seqn$ is Cauchy in $\mu$-measure by~(1) of Theorem~\ref{base}.
Suppose, contrary to our claim, that $\{f_n\}_\seqn$ has a subsequence $\{f_{n_k}\}_\seqk$
converging $\mu$-almost uniformly to a function $f\in\calF_0(X)$.
Since $f_{n_k}\mconv 1$ and since $\mu$ is null-continuous and has the (p.g.p.),
it follows from Lemma~\ref{au} that $f=1$ $\mu$-a.e.~and $f_{n_k}\to 1$ $\mu$-a.u.
Hence, there is an $E\in\calA$ such that $\mu(E)<1/2$ and
\begin{equation}\label{eqn:counter1}
\sup_{x\not\in E}|f_{n_k}(x)-1|\to 0.
\end{equation}
Then $E$ is at most a finite subset of $X$ since $\mu(E)<1/2$.
If $E=\eset$, then $n_k\not\in E$ for every $\seqk$, hence
\[
1\geq\sup_{x\not\in E}|f_{n_k}(x)-1|\geq |f_{n_k}(n_k)-1|=1,
\]
which implies that $\sup_{x\not\in E}|f_{n_k}(x)-1|=1$.
If $E$ is a nonempty finite subset of $X$, then there is a $k_0\in\bbN$ such that
$n_k\not\in E$ for every $k\geq k_0$, hence $\sup_{x\not\in E}|f_{n_k}(x)-1|=1$.
Both cases thus contradict~\eqref{eqn:counter1}.
\end{proof}
The following examples show that Theorems~\ref{comp} and~\ref{fund} are
sharpened versions of the corresponding results in~\cite{J-S-W-K-L-Y}.
We say that a nonadditive measure $\mu$ satisfies \emph{condition~($**$)}\/ if
for any $\ep>0$ there is a $\delta>0$ such that condition~($*$) in Proposition~\ref{ex-C-pro}
holds for every sequence $\{E_n\}_\seqn\subset\calA$.
\begin{example}\label{counter2}
Let $\nu\in\calM(X)$.
Assume that $\nu$ is continuous from below and subadditive.
Define the nonadditive measure $\mu\colon\calA\to [0,\infty]$ by
\[
\mu(A):=\begin{cases}
\nu(A) & \mbox{if }\nu(A)<1,\\
1+\nu(A) & \mbox{if }\nu(A)\geq 1
\end{cases}
\]
for every $A\in\calA$.
Then $\mu$ has property~(C) and the (p.g.p.).
In addition, it is null-additive and null-continuous, but not continuous from below.
Moreover, in the case where $\nu$ is the Lebesgue measure on $\bbR$,
we see that $\mu$ does not satisfy condition ($**$).
In fact, for any $\delta>0$, let $\delta_0:=(1\land\delta)/2>0$
and $E_n:=[(n-1)\delta_0,n\delta_0]$ for every $\seqn$.
Then it follows that $\sup_\seqn\mu(E_n)<\delta$,
but $\mu(\bigcup_{n=1}^\infty E_n)=\infty$.
This means that condition~($**$) is strictly stronger than
the conjunction of property~(C) and the (p.g.p.).
\end{example}
\begin{example}\label{counter3}
Let $X:=\bbN$ and $\calA:=2^\bbN$.
Define the nonadditive measure $\mu\colon\calA\to [0,1]$ by
\[
\mu(A):=\begin{cases}
1 & \mbox{if }A=X,\\
1/2 & \mbox{if }\eset\subsetneq A\subsetneq X,\\
0 & \mbox{if }A=\eset
\end{cases}
\]
for every $A\in\calA$.
Then $\mu$ has property~(C) and the (p.g.p.).
In addition, it is null-additive and null-continuous, but not continuous from below.
In more detail, for any sequence $\{E_n\}_\seqn\subset\calA$ we have
\[
\mu\left(\bigcup_{n=1}^\infty E_n\right)\leq 2\sup_\seqn\mu(E_n),
\]
so that $\mu$ satisfies condition ($**$) by Proposition~\ref{ex-C-pro}.
This shows that the continuity from below does not follow from condition~($**$).
\end{example}

%
%
\section{The necessity of property (C)}\label{necessity}
In this section, we discuss whether property~(C) is a necessary condition
for the Cauchy criterion to hold in the case where $X$ is countable.
To this end we prepare the following lemmas.
\begin{lemma}\label{countable}
Let $\mu\in\calM(X)$. Assume that $\mu$ is weakly null-additive.
If $X$ is countable, then the following assertions are equivalent.
\begin{enumerate}
\item[\us{(i)}] $\mu$ has property~(S).
\item[\us{(ii)}] $\mu$ is null-continuous.
\item[\us{(iii)}] For any sequence $\{E_n\}_\seqn\subset\calA$,
if $\mu(E_n)\to 0$ then $\mu\left(\bigcap_{k=1}^\infty\bigcup_{n=k}^\infty E_n\right)=0$.
\end{enumerate}
\end{lemma}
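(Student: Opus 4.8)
The plan is to prove the cycle of implications \us{(i)}$\Rightarrow$\us{(ii)}$\Rightarrow$\us{(iii)}$\Rightarrow$\us{(i)}. Two of these links are short and use neither the countability of $X$ nor weak null-additivity; the real content sits in \us{(ii)}$\Rightarrow$\us{(iii)}, which is where both standing hypotheses are genuinely needed. (The equivalence \us{(i)}$\Leftrightarrow$\us{(ii)} for countable $X$ is also available from~\cite[Proposition~3.2]{U-M}, but a self-contained argument is just as quick.)

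For \us{(i)}$\Rightarrow$\us{(ii)}, I would take a nondecreasing $\{N_n\}_\seqn\subset\calA$ with $\mu(N_n)=0$ for every $\seqn$; since then $\mu(N_n)\to 0$, property~(S) supplies a subsequence $\{N_{n_k}\}_\seqk$ with $\mu\bigl(\bigcap_{i=1}^\infty\bigcup_{k=i}^\infty N_{n_k}\bigr)=0$, and because $\{N_n\}_\seqn$ is nondecreasing and $n_k\to\infty$, every tail union $\bigcup_{k=i}^\infty N_{n_k}$ already equals $\bigcup_{n=1}^\infty N_n$, so the displayed set is $\bigcup_{n=1}^\infty N_n$ and has measure $0$; that is, $\mu$ is null-continuous. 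For \us{(iii)}$\Rightarrow$\us{(i)}, given $\{A_n\}_\seqn\subset\calA$ with $\mu(A_n)\to 0$, assertion~\us{(iii)} states exactly that $\mu\bigl(\bigcap_{k=1}^\infty\bigcup_{n=k}^\infty A_n\bigr)=0$, so the full sequence $\{A_n\}_\seqn$ itself plays the role of the subsequence required in the definition of property~(S).

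The main step is \us{(ii)}$\Rightarrow$\us{(iii)}. Fix $\{E_n\}_\seqn\subset\calA$ with $\mu(E_n)\to 0$ and put $E:=\bigcap_{k=1}^\infty\bigcup_{n=k}^\infty E_n\in\calA$, so that $x\in E$ iff $x\in E_n$ for infinitely many $\seqn$. Since $X$ is countable, $\calA$ is generated by an at most countable partition of $X$ into nonempty measurable atoms, and $E$ is a union of such atoms. If $A$ is an atom with $A\subseteq E$, pick $x\in A$: then $x\in E_n$ for infinitely many $\seqn$, and for each such $n$ the atom $A$ is contained in $E_n$, so $\mu(A)\le\mu(E_n)$ for infinitely many $\seqn$ and hence $\mu(A)=0$. (When $\calA=2^X$ the atoms are the singletons, and this is immediate from monotonicity.) Enumerating the atoms contained in $E$ and letting $\{F_k\}_\seqk$ be the nondecreasing sequence of unions of the initial segments of this enumeration, one gets $F_k\uparrow E$, each $F_k\in\calA$ a finite union of $\mu$-null atoms, hence $\mu(F_k)=0$ for every $\seqk$ by weak null-additivity (induction, with $\mu(\eset)=0$). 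Null-continuity of $\mu$ then gives $\mu(E)=\mu\bigl(\bigcup_{k=1}^\infty F_k\bigr)=0$, which is \us{(iii)}.

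I do not expect a real obstacle here; in \us{(ii)}$\Rightarrow$\us{(iii)} the only point needing care is the bookkeeping when $E$ contains finitely many atoms (or none), handled by letting the enumeration terminate, and the decisive observation is simply that countability turns $E$ into a countable increasing union of $\mu$-null sets. It is worth recording that weak null-additivity cannot be omitted: on a two-point space with $\mu$ vanishing on singletons but $\mu(X)>0$, the measure $\mu$ is null-continuous yet \us{(iii)} fails for the sequence $E_n$ alternating between the two singletons, whose upper limit is $X$.
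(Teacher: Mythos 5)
Your proposal is correct and, for the decisive implication \us{(ii)}$\Rightarrow$\us{(iii)}, follows essentially the same strategy as the paper: use the countability of $X$ to write the set $E=\bigcap_{k=1}^\infty\bigcup_{n=k}^\infty E_n$ as an at most countable union of $\mu$-null measurable pieces, and then conclude $\mu(E)=0$ from weak null-additivity (finite unions) plus null-continuity (the increasing limit). The only substantive difference is the choice of pieces. The paper takes, for each $a\in E$, the set $E(a):=\bigcap\{E_n\colon a\in E_n\}$, which is a \emph{countable} intersection of members of the given sequence and hence trivially in $\calA$; you instead take the atoms $A(x):=\bigcap\{A\in\calA\colon x\in A\}$ of the whole $\sigma$-field. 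Your route works, but the assertion that these atoms are measurable and partition $X$ is exactly the point you pass over: an intersection over the (possibly uncountable) family $\calA$ is not measurable for free. It is true here — for countable $X$ one writes $X\setminus A(x)=\bigcup_{y\notin A(x)}B_y$ with $B_y\in\calA$, $y\in B_y$, $x\notin B_y$, a countable union — but you should include that one line; the paper's choice of $E(a)$ sidesteps the issue entirely (and note $A(x)\subset E(x)$, so either family of pieces is null by the same monotonicity argument). On the easy links you are a bit more self-contained than the paper: you prove \us{(i)}$\Rightarrow$\us{(ii)} directly by observing that for a nondecreasing sequence of null sets every tail union of a subsequence equals the full union, whereas the paper cites \cite[Proposition~3.2]{U-M} for \us{(i)}$\Leftrightarrow$\us{(ii)}; your \us{(iii)}$\Rightarrow$\us{(i)} (take the full sequence as its own subsequence) matches the paper's ``obvious''. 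Your closing two-point counterexample showing weak null-additivity cannot be dropped is a nice complement to the paper's remark that \us{(iii)} in fact implies weak null-additivity.
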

\begin{proof}
(i)$\Leftrightarrow$(ii)\ It follows from~\cite[Proposition~3.2]{U-M}.
\par
(ii)$\Rightarrow$(iii)\ We may assume that $\bigcup_{n=1}^\infty E_n\ne\eset$.
For each $x\in\bigcup_{n=1}^\infty E_n$,
let $\calE_x$ be the collection of all sets $E_n$ containing $x$,
and let $E(x)$ be the intersection of all sets in $\calE_x$.
Since $D:=\bigcap_{k=1}^\infty\bigcup_{n=k}^\infty E_n$ is at most countable,
it may be expressed by $\{a_m\colon m\in T\}$, where
$T=\bbN$ or $T=\{1,2,\dots,N\}$ for some $N\in\bbN$.
Then $D=\bigcup_{m\in T}E(a_m)$.
\par
Fix $\seqm$. Since $a_m\in\bigcup_{n=1}^\infty E_n$, there is an $n_1^{(m)}\in\bbN$
such that $a_m\in E_{n_1^{(m)}}$.
Then $a_m\in\bigcup_{n=n_1^{(m)}+1}^\infty E_n$,
hence there is an $n_2^{(m)}\in\bbN$ such that
$n_2^{(m)}>n_1^{(m)}$ and $a_m\in\ E_{n_2^{(m)}}$.
We continue in this fashion to obtain an increasing sequence $\{n_j^{(m)}\}_\seqj$ such that
$E(a_m)\subset E_{n_j^{(m)}}$ for every $\seqj$.
Hence $\mu(E(a_m))=0$ for every $m\in T$ since $\mu(E_n)\to 0$.
Therefore, $T$ being at most countable,  $\mu(D)=0$
by the weak null-additivity and null-continuity of $\mu$.
\par
(iii)$\Rightarrow$(i)\ It is obvious.
\end{proof}
\begin{remark}
Assertion (iii) of Lemma~\ref{countable} implies the weak null-additivity of $\mu$.
Indeed, take $A,B\in\calA$ and assume that $\mu(A)=\mu(B)=0$.
For each $\seqn$, let
\[
E_n:=\begin{cases}
A & \mbox{if $n$ is even},\\
B & \mbox{if $n$ is odd}.
\end{cases}
\]
Then $\mu(E_n)=0$ for every $\seqn$.
Hence, assertion (iii) yields
$\mu(A\cup B)=\mu\left(\bigcap_{k=1}^\infty\bigcup_{n=k}^\infty E_n\right)=0$,
which implies the weak null-additivity of $\mu$.
\end{remark}
\begin{lemma}\label{null-conti}
Let $\mu\in\calM(X)$. If any Cauchy in $\mu$-measure sequence $\{f_n\}_\seqn$ has
a subsequence converging $\mu$-almost everywhere, then $\mu$ is null-continuous.
\end{lemma}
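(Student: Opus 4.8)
The plan is to establish null-continuity directly from the hypothesis. So I would take a nondecreasing sequence $\{N_n\}_\seqn\subset\calA$ with $\mu(N_n)=0$ for every $\seqn$, set $N:=\bigcup_{n=1}^\infty N_n$, and aim to show $\mu(N)=0$. The guiding idea is to feed the hypothesis a measurable sequence that is Cauchy in $\mu$-measure for trivial reasons yet diverges to $+\infty$ at every point of $N$ along every subsequence; then the single exceptional set provided by the $\mu$-a.e.\ convergence is forced to contain $N$.

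Concretely, I would define $f_n:=n\chi_{N_n}\in\calF_0(X)$ for each $\seqn$. The first step is to verify that $\{f_n\}_\seqn$ is Cauchy in $\mu$-measure: for $n\le m$ one has $N_n\subset N_m$, so $f_m-f_n$ vanishes off $N_m$; hence $\{|f_m-f_n|>\ep\}\subset N_m$ for every $\ep>0$, and therefore $\mu(\{|f_m-f_n|>\ep\})\le\mu(N_m)=0$. The case $m\le n$ is symmetric, so $\{f_n\}_\seqn$ is (trivially) Cauchy in $\mu$-measure. This step is where the monotonicity of $\{N_n\}_\seqn$ is used; no property of $\mu$ beyond $\mu(N_n)=0$ is needed here.

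By hypothesis, $\{f_n\}_\seqn$ then has a subsequence $\{f_{n_k}\}_\seqk$ converging $\mu$-a.e.\ to some $f\in\calF_0(X)$, i.e.\ there is $E\in\calA$ with $\mu(E)=0$ and $f_{n_k}(x)\to f(x)$ for every $x\notin E$. The second step is to show $N\subset E$: given $x\in N$, pick $n_0$ with $x\in N_{n_0}$; since $\{N_n\}_\seqn$ is nondecreasing, $x\in N_n$ for all $n\ge n_0$, so $f_{n_k}(x)=n_k$ for all sufficiently large $k$, and as $n_k\to\infty$ the sequence $\{f_{n_k}(x)\}_\seqk$ cannot converge to the real number $f(x)$. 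Hence $x\in E$, so $N\subset E$, and monotonicity of $\mu$ gives $\mu(N)\le\mu(E)=0$. This is exactly null-continuity.

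I do not expect a real obstacle: the only things to get right are the elementary inclusion $\{|f_m-f_n|>\ep\}\subset N_m$ that makes $\{f_n\}_\seqn$ Cauchy in $\mu$-measure, and the observation that divergence to $+\infty$ is inherited by every subsequence, which is what forces the (single, common) exceptional set $E$ to absorb all of $N$. The slightly delicate point worth stating carefully is that $\mu$-a.e.\ convergence is to a \emph{real-valued} $f\in\calF_0(X)$, so $f(x)\in\bbR$ and the values $n_k\to\infty$ genuinely fail to converge to $f(x)$.
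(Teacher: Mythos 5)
Your proposal is correct and follows essentially the same route as the paper: the paper also takes $f_n:=n\chi_{N_n}$, observes that this sequence is (trivially) Cauchy in $\mu$-measure, and concludes $N\subset N_0$ for the exceptional null set $N_0$ because $f_{n_k}(x)\to\infty$ on $N$ while the limit $f$ is real-valued. You merely spell out the inclusion $\{|f_m-f_n|>\ep\}\subset N_m$ that the paper leaves implicit.
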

\begin{proof}
Take a nondecreasing sequence $\{N_n\}_\seqn\subset\calA$ and assume that $\mu(N_n)=0$
for every $\seqn$. Let $N:=\bigcup_{n=1}^\infty N_n$ and assume that $N\ne\eset$
without loss of generality.
For each $\seqn$, let $f_n:=n\chi_{N_n}$.
Then $\{f_n\}_\seqn$ is a Cauchy in $\mu$-measure sequence in $\calF_0(X)$
such that $f_n(x)\to\infty$ for every $x\in N$.
Hence, there exist a function $f\in\calF_0(X)$
and a subsequence $\{f_{n_k}\}_\seqk$ of $\{f_n\}_\seqn$ such that $f_{n_k}\to f$ $\mu$-a.e.,
so that there is an $N_0\in\calA$ such that $\mu(N_0)=0$ and $f_{n_k}(x)\to f(x)$
for every $x\not\in N_0$.
Since $f_n(x)\to\infty$ for every $x\in N$ and $f$ is real-valued, we have $N\subset N_0$,
hence $\mu(N)=0$. Thus $\mu$ is null-continuous.
\end{proof}
\begin{theorem}\label{NSC0}
Let $\mu\in\calM(X)$. Assume that $\mu$ has the (p.g.p.).
If $X$ is countable, then the following assertions are equivalent.
\begin{enumerate}
\item[\us{(i)}] $\mu$ has property~($\mbox{C}_0$).
\item[\us{(ii)}] Any Cauchy in $\mu$-measure sequence $\{f_n\}_\seqn\subset\calF_0(X)$
has a subsequence converging $\mu$-almost everywhere.
\end{enumerate}
\end{theorem}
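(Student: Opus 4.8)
The plan is to handle the two implications separately, since the forward direction is essentially a citation while the converse carries the content. For (i)$\Rightarrow$(ii), I would simply invoke Corollary~\ref{AE}: once $\mu$ has property~($\mbox{C}_0$) and the (p.g.p.), every Cauchy in $\mu$-measure sequence in $\calF_0(X)$ already has a subsequence converging $\mu$-almost everywhere. Note that countability of $X$ plays no role in this half.

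For (ii)$\Rightarrow$(i), I would argue in three short steps. First, apply Lemma~\ref{null-conti} to the hypothesis (ii) to conclude that $\mu$ is null-continuous. Second, since $\mu$ has the (p.g.p.) it is weakly null-additive, so with $X$ countable the standing hypotheses of Lemma~\ref{countable} are in force; null-continuity is then equivalent to assertion (iii) of that lemma, i.e., $\mu\left(\bigcap_{k=1}^\infty\bigcup_{n=k}^\infty E_n\right)=0$ for every $\{E_n\}_\seqn\subset\calA$ with $\mu(E_n)\to 0$. Third, observe that the hypothesis in the definition of property~($\mbox{C}_0$) implies $\mu(E_n)\to 0$: if $\sup_\seql\mu\left(\bigcup_{n=k}^{k+l}E_n\right)\to 0$, then since $E_k\subset\bigcup_{n=k}^{k+l}E_n$ for every $\seql$, monotonicity of $\mu$ gives $\mu(E_k)\leq\sup_\seql\mu\left(\bigcup_{n=k}^{k+l}E_n\right)$, so that $\mu(E_k)\to 0$. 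Feeding such a sequence into the second step then yields $\mu\left(\bigcap_{k=1}^\infty\bigcup_{n=k}^\infty E_n\right)=0$, which is exactly property~($\mbox{C}_0$).

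I do not expect a serious obstacle: the argument is an assembly of Lemmas~\ref{null-conti} and~\ref{countable}, and the only point requiring attention is recognizing that Lemma~\ref{countable} already packages, for countable $X$, the equivalence between null-continuity and the ``$\limsup$-set is $\mu$-null'' conclusion, and that property~($\mbox{C}_0$) merely asks for that same conclusion under a stronger hypothesis. One should also be careful to use the (p.g.p.) precisely where weak null-additivity is needed -- namely to apply Lemma~\ref{countable} -- since Lemma~\ref{null-conti} itself requires no such assumption.
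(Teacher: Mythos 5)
Your proposal is correct and follows essentially the same route as the paper: the forward implication is exactly Corollary~\ref{AE}, and the converse combines Lemma~\ref{null-conti} (null-continuity from hypothesis (ii)), the weak null-additivity supplied by the (p.g.p.), and the implication (ii)$\Rightarrow$(iii) of Lemma~\ref{countable}, after noting that the hypothesis of property~($\mbox{C}_0$) forces $\mu(E_n)\to 0$. Your added remarks on where countability and the (p.g.p.) are actually used are accurate and match the paper's (more terse) argument.
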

\begin{proof}
(i)$\Rightarrow$(ii)\ It follows from Corollary~\ref{AE}.
\par
(ii)$\Rightarrow$(i)\ Take $\{E_n\}_\seqn\subset\calA$ and assume that
\[
\sup_\seql\mu\left(\bigcup_{n=k}^{k+l}E_n\right)\to 0.
\]
Then $\mu(E_n)\to 0$, so that Lemma~\ref{countable} yields
$\mu\left(\bigcap_{k=1}^\infty\bigcup_{n=k}^\infty E_n\right)=0$
since $\mu$ is null-continuous by Lemma~\ref{null-conti} and weakly null-additive.
Hence $\mu$ has property~($\mbox{C}_0$).
\end{proof}
\begin{theorem}\label{NSC}
Let $\mu\in\calM(X)$. Assume that $\mu$ has the (p.g.p.).
If $X$ is countable, then the following assertions are equivalent.
\begin{enumerate}
\item[\us{(i)}] $\mu$ has property~(C).
\item[\us{(ii)}] Any Cauchy in $\mu$-measure sequence $\{f_n\}_\seqn\subset\calF_0(X)$
has a subsequence converging $\mu$-almost uniformly.
\item[\us{(iii)}] $\mu$ is null-continuous and any Cauchy in $\mu$-measure
sequence $\{f_n\}_\seqn\subset\calF_0(X)$ has a subsequence converging in $\mu$-measure.
\item[\us{(iv)}] $\mu$ is null-continuous and any Cauchy in $\mu$-measure
sequence $\{f_n\}_\seqn\subset\calF_0(X)$ converges in $\mu$-measure.
\end{enumerate}
\end{theorem}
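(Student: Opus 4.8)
The plan is to prove the cycle $\us{(i)}\Rightarrow\us{(ii)}\Rightarrow\us{(iii)}\Rightarrow\us{(iv)}\Rightarrow\us{(i)}$, of which only the last link carries any real content. The implication $\us{(i)}\Rightarrow\us{(ii)}$ is Theorem~\ref{comp}. For $\us{(ii)}\Rightarrow\us{(iii)}$, a $\mu$-almost uniformly convergent subsequence converges both $\mu$-almost everywhere and in $\mu$-measure, so \us{(ii)} implies that every Cauchy in $\mu$-measure sequence has a subsequence converging $\mu$-a.e., whence $\mu$ is null-continuous by Lemma~\ref{null-conti}; the $\mu$-measure half of \us{(iii)} is then immediate. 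For $\us{(iii)}\Rightarrow\us{(iv)}$, if $\{f_n\}_\seqn$ is Cauchy in $\mu$-measure and some subsequence converges in $\mu$-measure to $f$, then $f_n\mconv f$ by assertion~(2) of Theorem~\ref{base} (here the (p.g.p.)\ is used), and null-continuity is simply passed along.

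It remains to prove $\us{(iv)}\Rightarrow\us{(i)}$, which is the heart of the matter. I would first record the consequences of the hypotheses that will be needed: $\mu$ has the (p.g.p.)\ and is therefore weakly null-additive, it is null-continuous by \us{(iv)}, and $X$ is countable; hence $\mu$ has property~(S) by Lemma~\ref{countable}. Now fix any $\{E_n\}_\seqn\subset\calA$ with $\sup_\seql\mu\left(\bigcup_{n=k}^{k+l}E_n\right)\to 0$ and write $A_k:=\bigcup_{n=k}^\infty E_n$ and $N:=\bigcap_{k=1}^\infty A_k$. The hypothesis forces $\mu(E_n)\to 0$, so $\mu(N)=0$ by assertion~(iii) of Lemma~\ref{countable}; in other words property~($\mbox{C}_0$) already holds, and the task is to upgrade $\mu(N)=0$ to the stronger conclusion $\mu(A_k)\to 0$.

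The device is to apply \us{(iv)} to the simple functions $f_k:=\chi_{A_k}$. For $0<\ep<1$ one has $\{|f_k-f_{k+l}|>\ep\}=A_k\setminus A_{k+l}\subset\bigcup_{n=k}^{k+l-1}E_n$, so $\{f_k\}_\seqk$ is Cauchy in $\mu$-measure and, by \us{(iv)}, $f_k\mconv f$ for some $f\in\calF_0(X)$. Since $\mu$ has property~(S), the Riesz theorem \cite[Theorem~5.17]{L-M-P-K} yields a subsequence $f_{k_j}\to f$ $\mu$-a.e.; on the other hand $A_k\downarrow N$ gives $f_k\to\chi_N$ pointwise on $X$, so comparing the two limits forces $f=\chi_N$ $\mu$-a.e., i.e.\ $\mu(\{|f-\chi_N|>c\})=0$ for every $c>0$. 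Feeding this back through the (p.g.p.)\ upgrades $f_k\mconv f$ to $f_k\mconv\chi_N$, and evaluating at any $0<\ep<1$ gives $\mu(A_k\setminus N)=\mu(\{|\chi_{A_k}-\chi_N|>\ep\})\to 0$. Finally, since $A_k=(A_k\setminus N)\cup N$ and $\mu(N)=0$, one last use of the (p.g.p.)\ gives $\mu(A_k)\to 0$, so $\mu$ has property~(C).

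The main obstacle is exactly the gap between property~($\mbox{C}_0$) and property~(C): obtaining $\mu(N)=0$ is routine, but turning it into $\mu(A_k)\to 0$ forces one to use that the \emph{whole} sequence $\{\chi_{A_k}\}_\seqk$ — not merely a subsequence — converges in $\mu$-measure, to identify its limit with $\chi_N$ through the pointwise limit (which is where property~(S) and the Riesz theorem enter), and then to invoke the (p.g.p.)\ in order to absorb the $\mu$-null set $N$ into $A_k\setminus N$. This last absorption genuinely needs the (p.g.p.), since $\mu$ is only assumed weakly null-additive, not null-additive.
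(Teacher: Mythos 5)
Your proposal is correct and follows essentially the same route as the paper: the cycle $\us{(i)}\Rightarrow\us{(ii)}\Rightarrow\us{(iii)}\Rightarrow\us{(iv)}\Rightarrow\us{(i)}$ with the same supporting results (Theorem~\ref{comp}, Lemma~\ref{null-conti}, Theorem~\ref{base}(2), Lemma~\ref{countable}, the Riesz theorem, and the (p.g.p.)). The only cosmetic difference is in $\us{(iv)}\Rightarrow\us{(i)}$: the paper subtracts the null set $D$ from $A_k$ at the outset so the test functions tend to $0$ pointwise, whereas you keep $A_k=\bigcup_{n=k}^\infty E_n$ and identify the in-measure limit as $\chi_N$ before absorbing $N$ via the (p.g.p.) — both steps are equivalent.
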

\begin{proof}
(i)$\Rightarrow$(ii)\ It follows from Theorem~\ref{comp}.
\par
(ii)$\Rightarrow$(iii)\ It follows from Lemma~\ref{null-conti}.
\par
(iii)$\Rightarrow$(iv)\ It follows from (2) of Theorem~\ref{base}.
\par
(iv)$\Rightarrow$(i)\ Take $\{E_n\}_\seqn\subset\calA$ and assume that
\begin{equation}\label{eqn:NSC1}
\sup_\seql\mu\left(\bigcup_{n=k}^{k+l}E_n\right)\to 0.
\end{equation}
Let $D:=\bigcap_{k=1}^\infty\bigcup_{n=k}^\infty E_n$.
Since $\mu(E_n)\to 0$ by~\eqref{eqn:NSC1}
and since $\mu$ is weakly null-additive and null-continuous,
$\mu(D)=0$ follows from Lemma~\ref{countable}.
For each $\seqn$, let
$A_n:=\left(\bigcup_{i=n}^\infty E_i\right)\setminus D$ and $f_n:=\chi_{A_n}$.
Then $A_n\downarrow\eset$, hence $f_n(x)\to 0$ for every $x\in X$.
\par
We first show that $\{f_n\}_\seqn$ is Cauchy in $\mu$-measure.
To this end, observe that for any $k,l\in\bbN$, if $x\not\in\bigcup_{n=k}^{k+l}E_n$,
then $f_{k+l}(x)=f_k(x)$.
In fact, if $x\not\in A_k$, then $x\not\in A_{k+l}$, hence $f_{k+l}(x)=f_k(x)=0$.
If $x\in A_k$, then the set inclusion
\[
A_k=\left\{\left(\bigcup_{n=k}^{k+l-1}E_n\right)\setminus D\right\}
\cup\left\{\left(\bigcup_{n=k+l}^\infty E_n\right)\setminus D\right\}
\subset\left(\bigcup_{n=k}^{k+l} E_n\right)\cup A_{k+l}
\]
implies that $x\in A_{k+l}$ since $x\not\in\bigcup_{n=k}^{k+l}E_n$.
Hence $f_{k+l}(x)=f_k(x)=1$.
Therefore, for given $\ep>0$ and $\delta>0$, by~\eqref{eqn:NSC1} there is a $k_0\in\bbN$
such that for any $k,l\in\bbN$, if $k\geq k_0$ then
\[
\mu(\{|f_{k+l}-f_k|>\delta\})\leq\mu\left(\bigcup_{n=k}^{k+l}E_n\right)<\ep.
\]
Thus $\{f_n\}_\seqn$ is Cauchy in $\mu$-measure.
\par
Now, by assertion (iv) there is a function $f\in\calF_0(X)$ such that $f_n\mconv f$.
Since $\mu$ has property~(S) by Lemma~\ref{countable}, the Riesz theorem shows that
$\{f_n\}_\seqn$ has a subsequence $\{f_{n_k}\}_\seqk$ such that $f_{n_k}\to f$ $\mu$-a.e.
Hence $\mu(\{f\ne 0\})=0$ since $f_n(x)\to 0$ for every $x\in X$. Thus
\begin{equation}\label{eqn:NSC2}
\mu(\{|f_n-f|>1/2\}\cup\{f\neq 0\})\to 0
\end{equation}
since $f_n\mconv f$ and $\mu$ has the (p.g.p.).
Observe that
\[
A_n=\{|f_n|>1/2\}\subset\{|f_n-f|>1/2\}\cup\{f\ne 0\}
\]
for every $\seqn$. From this observation and~\eqref{eqn:NSC2} it follows that $\mu(A_n)\to 0$.
Since $\mu(D)=0$ and $\bigcup_{n=k}^\infty E_n\subset A_k\cup D$ for every $\seqk$,
the (p.g.p.)~of $\mu$ implies that $\mu\left(\bigcup_{n=k}^\infty E_n\right)\to 0$.
Therefore, $\mu$ has property~(C).
\end{proof}
\section{The space of the measurable functions}\label{MFS}
Prenorms on the space of all measurable functions have already been introduced
in~\cite{H-O,Kawabe2021,Li,O-Z,W-R-W} for nonadditive measures.
In~\cite{W-R-W}, the prenorm is given by the Sugeno integral,
while in~\cite{O-Z} by the Choquet integral.
The prenorm in~\cite{Li} is simpler than ours (Definition~\ref{DS}) for a description.
However, the prenorms in~\cite{Li,O-Z} may take the infinite value unless $\mu$ is finite,
and the prenorm in~\cite{W-R-W} may be incompatible with our purpose since the Lorentz space
is defined by the Choquet integral and the Lorentz space of weak type
is defined by the Shilkret integral.
We thus introduce a prenorm on the real linear space $\calF_0(X)$
with the property that it always takes finite values and is independent of the specific integrals.
To this end, define the function $\varphi\colon [0,\infty]\to [0,\pi/2]$ by
\[
\varphi(t):=\begin{cases}
\arctan t & \mbox{if }t\ne\infty\\
\pi/2 & \mbox{if }t=\infty.
\end{cases}
\]
Then $\varphi$ is a continuous and increasing function with the property that $\varphi(t)=0$
if and only if $t=0$. 
It satisfies $\varphi(t)\leq t$ and
$\varphi(s+t)\leq\varphi(s)+\varphi(t)$ for every $s,t\in [0,\infty]$.
In addition, given a constant $M>0$, it follows that $\varphi(Mt)\leq\max\{1,M\}\varphi(t)$
for every $t\in [0,\infty]$.
\begin{remark}
We may use any functions having the properties above
instead of the inverse tangent.
For instance, the function $\varphi\colon [0,\infty]\to [0,1]$ defined by
\[
\varphi(t):=\begin{cases}
t/(1+t) & \mbox{if }0\leq t<\infty,\\
1 & \mbox{if }t=\infty
\end{cases}
\]
has all the properties above.
\end{remark}
\begin{definition}[\cite{D-S,H-O,Kawabe2021,Rao}]\label{DS}
Let $\mu\in\calM(X)$.
Define the prenorm $\dsn{\cdot}$ on $\calF_0(X)$ by
\[
\dsn{f}:=\inf_{c>0}\varphi(c+\mu(\{|f|>c\}))
\]
for every $f\in\calF_0(X)$.
If the measure $\mu$ is needed to specify, then $\dsn{f}$ is written as $\dsnm{f}{\mu}$.
\end{definition}
In what follows, when the space $\calF_0(X)$ is considered
together with the prenorm $\dsn{\cdot}$,
it is written as $\calL^0(\mu)$ since the prenorm depends
on the nonadditive measure $\mu$.
\par
The prenorm $\dsn{\cdot}$ is truncated subhomogeneous, that is,
\[
\dsn{cf}\leq\max\{1,|c|\}\dsn{f}
\]
for every $f\in\calL^0(\mu)$ and $c\in\bbR$,
but not satisfy the triangle inequality in general.
Furthermore, for any $f\in\calL^0(\mu)$ it follows that
$\dsn{f}=0$ if and only if $\mu(\{|f|>c\})=0$ for every $c>0$;
they are equivalent to $\mu(\{|f|>0\})=0$ if $\mu$ is null-continuous.
The same argument as that in ordinary measure theory shows that
for any sequence $\{f_n\}_\seqn\subset\calL^0(\mu)$ and $f\in\calL^0(\mu)$,
convergence with respect to $\dsn{\cdot}$ is equivalent to convergence in $\mu$-measure, that is,
it follows that $\dsn{f_n-f}\to 0$ if and only if $f_n\mconv f$.
In the same way, $\{f_n\}_\seqn$ is Cauchy with respect to $\dsn{\cdot}$
if and only if $\{f_n\}_\seqn$ is Cauchy in $\mu$-measure.
The prenorm $\dsn{\cdot}$ also has the following properties, some of which are related to
the characteristic of nonadditive measures.
\begin{proposition}\label{PDS}
Let $\mu\in\calM(X)$.
\begin{enumerate}
\item[\us{(1)}] For any $A\in\calA$ and $c\in\bbR$
it follows that $\dsn{c\chi_A}=\min\{\varphi(\mu(A)),\varphi(|c|)\}$.
\item[\us{(2)}] For any $f,g\in\calL^0(\mu)$, if $|f|\leq |g|$,
then $\dsn{f}\leq\dsn{g}$. 
\item[\us{(3)}] $\mu$ is weakly null-additive if and only if $\dsn{\cdot}$ is weakly null-additive.
\item[\us{(4)}] $\mu$ is null-additive if and only if $\dsn{\cdot}$ is null-additive.
\item[\us{(5)}] If $\mu$ is $K$-relaxed subadditive for some $K\geq 1$,
then $\dsn{\cdot}$ satisfies the $K$-relaxed triangle inequality.
In particular, if $\mu$ is subadditive, then $\dsn{\cdot}$ satisfies the triangle inequality.
\item[\us{(6)}] $\mu$ is null-additive if and only if it follows that $\dsn{f}=\dsn{g}$ whenever
$f,g\in\calL^0(\mu)$ and  $f\sim g$.
\end{enumerate}
\end{proposition}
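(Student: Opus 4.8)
The plan is to take the six items in order, using (1) as the computational base, (2) as the monotonicity tool, and a short ``recovery'' remark drawn from (1) to power the transfer equivalences (3), (4), (6); item (5) is handled separately by a two‑parameter estimate.

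For (1) I would compute directly. Assuming $c\ge 0$ (the case $c<0$ is immediate from $\dsn{-f}=\dsn f$), the distribution function of $c\chi_A$ is $\mu(\{c\chi_A>t\})=\mu(A)$ for $0<t<c$ and $=0$ for $t\ge c$; splitting the infimum defining $\dsn{c\chi_A}$ at $t=c$ and using that $\varphi$ is continuous and increasing gives $\inf_{0<t<c}\varphi(t+\mu(A))=\varphi(\mu(A))$ and $\inf_{t\ge c}\varphi(t)=\varphi(c)$, so $\dsn{c\chi_A}=\min\{\varphi(\mu(A)),\varphi(c)\}$. Item (2) is immediate: $|f|\le|g|$ gives $\{|f|>c\}\subset\{|g|>c\}$, and monotonicity of $\mu$ and of $\varphi$ passes through the infimum. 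The remark I record next is that, since $\varphi(c)\uparrow\pi/2$ as $c\to\infty$ while $\varphi$ takes values in $[0,\pi/2]$, (1) yields $\sup_{c>0}\dsn{c\chi_A}=\varphi(\mu(A))$; as $\varphi$ is a bijection of $[0,\infty]$ onto $[0,\pi/2]$, this recovers $\mu(A)$ from the numbers $\dsn{c\chi_A}$, $c>0$.

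For the ``only if'' halves of (3), (4), (6): if $\mu$ is weakly null-additive and $\dsn f=\dsn g=0$, then $\{|f+g|>c\}\subset\{|f|>c/2\}\cup\{|g|>c/2\}$ and weak null-additivity force $\mu(\{|f+g|>c\})=0$ for every $c>0$, i.e.\ $\dsn{f+g}=0$. If $\mu$ is null-additive and $\dsn g=0$, then for $0<\ep<c$ the inclusion $\{|f+g|>c\}\subset\{|f|>c-\ep\}\cup\{|g|>\ep\}$ and null-additivity give $\mu(\{|f+g|>c\})\le\mu(\{|f|>c-\ep\})$, hence $\varphi(c+\mu(\{|f+g|>c\}))\le\varphi(\ep)+\varphi((c-\ep)+\mu(\{|f|>c-\ep\}))$ by subadditivity of $\varphi$; taking the infimum over $c>\ep$ yields $\dsn{f+g}\le\varphi(\ep)+\dsn f$, and letting $\ep\downarrow0$ together with the symmetric bound applied to the pair $(f+g,-g)$ gives $\dsn{f+g}=\dsn f$, which is (4); then (6) follows since $f\sim g$ means exactly $\dsn{f-g}=0$. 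For the ``if'' halves: weak null-additivity of $\dsn\cdot$ applied to $\chi_A,\chi_B$ (each of prenorm $0$ when $\mu(A)=\mu(B)=0$), with (2) and $\chi_{A\cup B}\le\chi_A+\chi_B$, forces $\dsn{\chi_{A\cup B}}=0$, hence $\mu(A\cup B)=0$; for null-additivity of $\mu$ one must instead test on the dilates $c\chi_A$ and $c\chi_{B\setminus A}$ (plain indicators do not suffice because $\dsn\cdot\le\pi/2$), using $c\chi_A+c\chi_{B\setminus A}=c\chi_{A\cup B}$ and $\dsn{c\chi_{B\setminus A}}=0$ to get $\min\{\varphi(\mu(A\cup B)),\varphi(c)\}=\min\{\varphi(\mu(A)),\varphi(c)\}$ for all $c>0$, whereupon the recovery remark gives $\mu(A\cup B)=\mu(A)$; the ``if'' half of (6) is the same computation with $f=c\chi_{A\cup B}$, $g=c\chi_A$, noting $f\sim g$ since $f-g=c\chi_{B\setminus A}$ and $\mu(B\setminus A)=0$.

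For (5), first extend the $K$-relaxed subadditivity of $\mu$ from disjoint pairs to arbitrary pairs via $A\cup B=A\cup(B\setminus A)$. Then, for fixed $c,c'>0$, $\{|f+g|>c+c'\}\subset\{|f|>c\}\cup\{|g|>c'\}$ gives $\mu(\{|f+g|>c+c'\})\le K(\mu(\{|f|>c\})+\mu(\{|g|>c'\}))$; writing $a=c+\mu(\{|f|>c\})$ and $b=c'+\mu(\{|g|>c'\})$, one has $(c+c')+K(\mu(\{|f|>c\})+\mu(\{|g|>c'\}))\le K(a+b)$ because $K\ge1$, and then $\varphi(K(a+b))\le K\varphi(a+b)\le K(\varphi(a)+\varphi(b))$ by the two cited properties of $\varphi$; taking the infimum in $c$ and $c'$ separately, and noting that $c+c'$ runs over all of $(0,\infty)$, gives $\dsn{f+g}\le K(\dsn f+\dsn g)$, with the triangle inequality the case $K=1$. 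I expect the main obstacle to be bookkeeping rather than any single deep step: in particular, recognizing that the ``if'' directions of (4) and (6) genuinely require dilated indicators plus the supremum-recovery of $\mu$ (indicators alone fail because $\varphi$, hence $\dsn\cdot$, is bounded), and, in (5), correctly absorbing the constant $K$ and coupling the two infimum parameters.
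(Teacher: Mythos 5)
Your proposal is correct and, for the one item the paper actually proves in detail (item (6)), follows the same route: the ``only if'' direction is reduced to null-additivity of the prenorm, and the ``if'' direction is tested on the dilated indicators $r\chi_{A\cup B}$ and $r\chi_A$ with $r\uparrow\infty$, using the injectivity of $\varphi$ to recover $\mu(A\cup B)=\mu(A)$. The remaining items, which the paper dismisses as easy, are filled in correctly, including the pertinent observation that boundedness of $\varphi$ forces the use of dilated indicators rather than plain ones in the ``if'' halves of (4) and (6).
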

\begin{proof}
Assertions (1)--(5) are easy to prove.
Some of their proofs can be found in~\cite[Proposition~3.2]{Kawabe2021}.
See also~\cite{H-O}.
\par
(6)\ The ``only if'' part. Let$f,g\in\calL^0(\mu)$ and assume that $f\sim g$.
Then $\mu(\{|f-g|>c\})=0$ for every $c>0$, so that $\dsn{f-g}=0$.
Since $\mu$ is null-additive, so is $\dsn{\cdot}$ by (2), hence
$\dsn{f}=\dsn{g+(f-g)}=\dsn{g}$ follows.
\par
The ``if'' part. Take $A,B\in\calA$ and assume that $\mu(B)=0$.
For each $r>0$, let $f:=r\chi_{A\cup B}$ and $g:=r\chi_A$.
Then $\mu(\{|f-g|>c\})=0$ for every  $c>0$, hence $f\sim g$, and finally $\dsn{f}=\dsn{g}$.
It thus follows from (1) that
\[
\min\left\{\varphi(\mu(A\cup B),\varphi(r)\right\}=\dsn{f}=
\dsn{g}=\min\left\{\varphi(\mu(A),\varphi(r)\right\}
\]
for every $r>0$. Letting $r\uparrow\infty$ yields $\varphi(\mu(A\cup B))=\varphi(\mu(A))$,
which implies $\mu(A\cup B)=\mu(A)$ since $\varphi$ is injective.
Thus $\mu$ is null-additive.
\end{proof}
Next let us construct the quotient space of $\calL^0(\mu)$ by the equivalence relation
defined in Subsection~\ref{equiv}.
Let
\[
L^0(\mu):=\{[f]\colon f\in\calL^0(\mu)\}.
\]
Given an equivalence class $[f]\in L^0(\mu)$, define the prenorm on $L^0(\mu)$ by
$\dsn{[f]}:=\dsn{f}$, which is well-defined if $\mu$ is null-additive
by (6) of Proposition~\ref{PDS}.
This prenorm has the same properties as the prenorm on $\calL^0(\mu)$
and separates points of $L^0(\mu)$, that is, for any $[f]\in L^0(\mu)$,
if $\dsn{[f]}=0$ then $[f]=0$.
\par
Now, the Cauchy criterion with respect to $\dsn{\cdot}$ can be deduced
from Theorem~\ref{fund}.
\begin{theorem}\label{calL0}
Let $\mu\in\calM(X)$. Assume that $\mu$ has property~(C) and the (p.g.p.).
\begin{enumerate}
\item[\us{(1)}] A sequence $\{f_n\}_\seqn\subset\calL^0(\mu)$ is Cauchy
if and only if it converges.
\item[\us{(2)}] Additionally, assume that $\mu$ is null-additive.
A sequence $\{[f_n]\}_\seqn\subset L^0(\mu)$ is Cauchy if and only if it converges.
\end{enumerate}
\end{theorem}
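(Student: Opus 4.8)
The plan is to reduce both assertions to Theorem~\ref{fund}, using the already recorded fact that, for sequences in $\calL^0(\mu)$, Cauchyness (resp.\ convergence) with respect to $\dsn{\cdot}$ coincides with Cauchyness (resp.\ convergence) in $\mu$-measure. No new estimate is needed; the work is purely in chaining equivalences and passing to the quotient.

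For assertion~(1), let $\{f_n\}_\seqn\subset\calL^0(\mu)$. The remarks following Definition~\ref{DS} say that $\{f_n\}_\seqn$ is Cauchy with respect to $\dsn{\cdot}$ precisely when it is Cauchy in $\mu$-measure, and that $\{f_n\}_\seqn$ converges with respect to $\dsn{\cdot}$ precisely when it converges in $\mu$-measure. Since $\mu$ has property~(C) and the (p.g.p.), Theorem~\ref{fund} equates being Cauchy in $\mu$-measure with converging in $\mu$-measure. Chaining these three equivalences yields assertion~(1).

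For assertion~(2), I would additionally use that, by null-additivity of $\mu$ together with (6) of Proposition~\ref{PDS}, the prenorm $\dsn{[f]}:=\dsn{f}$ on $L^0(\mu)$ is well defined. The key point is the identity $[f_m]-[f_n]=[f_m-f_n]$, which holds because the linear operations on $L^0(\mu)$ were set up consistently with the equivalence relation of Subsection~\ref{equiv}; hence $\dsn{[f_m]-[f_n]}=\dsn{f_m-f_n}$ and, likewise, $\dsn{[f_n]-[f]}=\dsn{f_n-f}$ for any $f\in\calL^0(\mu)$. Consequently $\{[f_n]\}_\seqn$ is Cauchy in $L^0(\mu)$ exactly when $\{f_n\}_\seqn$ is Cauchy in $\calL^0(\mu)$, and $\{[f_n]\}_\seqn$ converges in $L^0(\mu)$ exactly when $\{f_n\}_\seqn$ converges in $\calL^0(\mu)$ (taking any representative of the limit class as the limit function, and conversely passing to classes). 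Assertion~(2) then follows from assertion~(1).

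There is essentially no obstacle here: the substantive content is entirely carried by Theorem~\ref{fund}. The only points requiring attention are bookkeeping ones --- the well-definedness of $\dsn{\cdot}$ on the quotient space, which is exactly where null-additivity of $\mu$ enters through Proposition~\ref{PDS}(6), and the elementary identity $[f_m]-[f_n]=[f_m-f_n]$ that lets one transport Cauchyness and convergence back and forth between $\calL^0(\mu)$ and $L^0(\mu)$.
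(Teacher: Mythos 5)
Your proposal is correct and is essentially the paper's own argument: the paper states Theorem~\ref{calL0} as an immediate deduction from Theorem~\ref{fund} via the already-recorded equivalences between Cauchyness/convergence for $\dsn{\cdot}$ and Cauchyness/convergence in $\mu$-measure, with null-additivity and (6) of Proposition~\ref{PDS} handling the passage to the quotient exactly as you describe.
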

\begin{corollary}\label{L0}
Let $\mu\in\calM(X)$. Assume that $\mu$ has property~(C) and the (p.g.p.).
Then $\calL^0(\mu)$ is complete.
If $\mu$ is additionally assumed to be null-additive, then $L^0(\mu)$ is complete.
\end{corollary}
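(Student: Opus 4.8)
The plan is to read off the completeness of $\calL^0(\mu)$ directly from Theorem~\ref{calL0}, since the nontrivial half of that theorem---``Cauchy implies convergent''---is precisely the defining property of a complete prenormed space. First I would recall the remarks following Definition~\ref{DS}: for sequences in $\calL^0(\mu)=\calF_0(X)$, being Cauchy with respect to the prenorm $\dsn{\cdot}$ is equivalent to being Cauchy in $\mu$-measure, and $\dsn{f_n-f}\to 0$ is equivalent to $f_n\mconv f$. Hence, given a $\dsn{\cdot}$-Cauchy sequence $\{f_n\}_\seqn$, it is Cauchy in $\mu$-measure, and Theorem~\ref{calL0}(1) (which rests on Theorem~\ref{fund}, applicable because $\mu$ has property~(C) and the (p.g.p.)) yields an $f\in\calF_0(X)=\calL^0(\mu)$ with $f_n\mconv f$, that is, $\dsn{f_n-f}\to 0$. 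Since the limit function is a member of $\calL^0(\mu)$, this is exactly completeness.

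For the quotient space $L^0(\mu)$ I would first record that, under the extra null-additivity hypothesis, $L^0(\mu)$ is a genuine prenormed space: null-additivity implies weak null-additivity, so the relation $\sim$ of Subsection~\ref{equiv} is an equivalence relation and $L^0(\mu)$ is a linear space, and by (6) of Proposition~\ref{PDS} the assignment $\dsn{[f]}:=\dsn{f}$ is independent of the representative. Then the same reasoning applies verbatim: a $\dsn{\cdot}$-Cauchy sequence $\{[f_n]\}_\seqn$ in $L^0(\mu)$ converges in $L^0(\mu)$ by Theorem~\ref{calL0}(2), so $L^0(\mu)$ is complete.

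There is essentially no obstacle; the whole content has already been concentrated in Theorem~\ref{calL0} and, behind it, Theorems~\ref{comp} and~\ref{fund}. The only point needing a moment's care is the bookkeeping that ``convergent'' in the sense of the prenormed space $\calL^0(\mu)$ (resp.\ $L^0(\mu)$) requires the limit to lie in that same space---which is automatic, since the limit furnished by Theorem~\ref{fund} is a measurable real-valued function, hence an element of $\calF_0(X)=\calL^0(\mu)$ (resp.\ its equivalence class is an element of $L^0(\mu)$).
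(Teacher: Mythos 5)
Your proposal is correct and follows exactly the route the paper intends: Corollary~\ref{L0} is an immediate consequence of Theorem~\ref{calL0} (hence of Theorem~\ref{fund}) via the stated equivalence between $\dsn{\cdot}$-Cauchy/convergence and Cauchy/convergence in $\mu$-measure, with the well-definedness of the quotient prenorm supplied by (6) of Proposition~\ref{PDS}. Nothing further is needed.
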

\begin{remark}
Every nonadditive measure that is continuous from below has property (C).
Furthermore, by Examples~\ref{counter2} and~\ref{counter3} there are nonadditive
measures having property~(C) and the (p.g.p.), but not continuous from below.
Consequently, Theorem~\ref{calL0} and Corollary~\ref{L0} are sharpened versions
of those given in~\cite{Kawabe2021}.
\end{remark}
\begin{example}
Let $X:=\bbN$ and $\calA:=2^\bbN$.
Let $\mu$ be the nonadditive measure given in Proposition~\ref{counter1}.
Then $\mu$ has the (p.g.p.)~but not property~(C).
For each $\seqn$, let $A_n:=\{1,2,\dots,n\}$ and $f_n:=\chi_{A_n}$.
Then by (4) of Proposition~\ref{counter1} the sequence
$\{f_n\}_\seqn\subset\calL^0(\mu)$ is Cauchy but does not converge.
Hence $\calL^0(\mu)$ is not complete.
This fact shows that property~(C) cannot be dropped
in Theorem~\ref{calL0} and Corollary~\ref{L0}.
\end{example}
The results for dense subsets and the separability of $\calL^0(\mu)$ and $L^0(\mu)$
immediately follow from~\cite[Theorems~7.2 and~7.7]{Kawabe2021}.
Recall that a nonadditive measure $\mu$ has a \emph{countable basis}\/ if
there is a countable subset
$\calD$ of $\calA$, which is called a \emph{countable basis} of $\mu$,
such that for any $A\in\calA$ and $\ep>0$
there is a $D\in\calD$ for which $\mu(A\triangle D)<\ep$.
Recall that $S(X)=\{[h]\colon h\in\calS(X)\}$.
\begin{theorem}
Let $\mu\in\calM(X)$. Assume that $\mu$ is order continuous.
Then $\calS(X)$ is dense in $\calL^0(\mu)$.
If $\mu$ is additionally assumed to be null-additive, then $S(X)$ is dense in $L^0(\mu)$.
\end{theorem}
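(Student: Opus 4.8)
The plan is to show that an arbitrary $f\in\calL^0(\mu)$ can be approximated in the prenorm $\dsn{\cdot}$ by simple functions, and then lift this to the quotient space. First I would fix $f\in\calF_0(X)$ and $\ep>0$. Because $\varphi$ is continuous with $\varphi(0)=0$, there is a $c_0>0$ with $\varphi(2c_0)<\ep$; fixing such a $c_0$, it suffices to build a simple function $h$ with $\mu(\{|f-h|>c_0\})<c_0$ (say), since then $\dsn{f-h}\le\varphi(c_0+\mu(\{|f-h|>c_0\}))<\varphi(2c_0)<\ep$. To produce $h$, I would first replace $f$ by a truncation $f_r:=(f\vee(-r))\wedge r$ for large $r$; on the set $\{|f|\le r\}$ the functions $f$ and $f_r$ agree, and outside this set $|f|>r$, a set whose measure tends to $0$ because $\{|f|>r\}\downarrow\eset$ and $\mu$ is order continuous. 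So for $r$ large enough, $\mu(\{f\ne f_r\})$ is as small as we like, and it remains to approximate the bounded measurable function $f_r$.

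For the bounded function $f_r$, I would use the standard dyadic partition of its range $[-r,r]$ into finitely many intervals of length $<c_0$, and let $h$ be the corresponding simple function (constant on each preimage); then $|f_r-h|\le c_0$ everywhere, so $\{|f_r-h|>c_0\}=\eset$. Combining, $\{|f-h|>c_0\}\subset\{f\ne f_r\}$, whose measure is small by the previous paragraph, and $h\in\calS(X)$. This establishes that $\calS(X)$ is dense in $\calL^0(\mu)$. Note that the only property of $\mu$ used so far is order continuity, which is exactly the hypothesis; monotonicity of $\mu$ is used implicitly when passing from $\{|f-h|>c_0\}$ to the larger set $\{f\ne f_r\}$.

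For the quotient statement, assume in addition that $\mu$ is null-additive, so that by (3) of Proposition~\ref{PDS} the relation $\sim$ is an equivalence relation, $L^0(\mu)$ is well-defined, and by (6) of Proposition~\ref{PDS} the induced prenorm $\dsn{[f]}:=\dsn{f}$ is well-defined. Given $[f]\in L^0(\mu)$ and $\ep>0$, pick $h\in\calS(X)$ with $\dsn{f-h}<\ep$ from the first part; then $[h]\in S(X)$ and $\dsn{[f]-[h]}=\dsn{[f-h]}=\dsn{f-h}<\ep$, so $S(X)$ is dense in $L^0(\mu)$.

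The main obstacle is not conceptual but is the verification that order continuity suffices to make $\mu(\{|f|>r\})\to 0$ as $r\to\infty$: one needs $\{|f|>r\}\downarrow\eset$, which holds because $f$ is real-valued (finite) at every point. If $f$ were allowed to take infinite values this step would fail, and indeed the space $\calL^0(\mu)$ here consists of genuinely real-valued functions, so the argument goes through; I would state this reduction carefully rather than leave it implicit.
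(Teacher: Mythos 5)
Your argument is correct. Note that the paper does not actually write out a proof of this theorem; it simply remarks that the statement ``immediately follows'' from Theorems~7.2 and~7.7 of the cited reference \cite{Kawabe2021}. Your self-contained proof is the standard one and every step checks out: since $f$ is real-valued, $\{|f|>n\}\downarrow\eset$, so order continuity gives $\mu(\{|f|>n\})\to 0$; the uniform discretization of the truncation $f_r$ on $[-r,r]$ yields $h\in\calS(X)$ with $\{|f-h|>c_0\}\subset\{|f|>r\}$; and the estimate $\dsn{f-h}\leq\varphi(c_0+\mu(\{|f-h|>c_0\}))<\varphi(2c_0)<\ep$ uses only the definition of $\dsn{\cdot}$ and the monotonicity and continuity of $\varphi$ at $0$. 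Two minor points: the well-definedness of the equivalence relation $\sim$ and of the quotient space rests on the weak null-additivity of $\mu$ as discussed in Subsection~\ref{equiv} (null-additivity implies this), not on (3) of Proposition~\ref{PDS}, which is a statement about the prenorm; and when invoking order continuity you should pass to the integer sequence $r=n$, since the definition is stated for sequences --- monotonicity of $\mu$ then handles arbitrary real $r$. Neither point affects the validity of the proof.
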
 
\begin{theorem}\label{L0sep}
Let $\mu\in\calM(X)$. Assume that $\mu$ is order continuous and has the (p.g.p.).
Assume that $\mu$ has a countable basis.
Then there is a countable subset $\calE$ of $\calL^0(\mu)$
such that for any $f\in\calL^0(\mu)$ and $\ep>0$ one can find $h\in\calE$ for which $\dsn{f-h}<\ep$.
Hence $\calL^0(\mu)$ is separable.
If $\mu$ is additionally assumed to be null-additive, then $L^0(\mu)$ is separable.
\end{theorem}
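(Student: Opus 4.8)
The plan is to build the countable set $\calE$ explicitly from the countable basis together with a countable set of ``levels,'' and then to estimate $\dsn{f-h}$ for a suitably chosen $h$ by splitting the error into a part controlled by the difference of characteristic functions on basis sets and a part controlled by the coarseness of the chosen levels. First I would recall from the discussion preceding the statement that convergence with respect to $\dsn{\cdot}$ is exactly convergence in $\mu$-measure, and that $\dsn{c\chi_A}=\min\{\varphi(\mu(A)),\varphi(|c|)\}$ by (1) of Proposition~\ref{PDS}; in particular $\dsn{c\chi_A}\le\varphi(\mu(A))$, so the prenorm of a characteristic function is small as soon as the measure of its support is small. Since $\varphi$ is subadditive and truncated subhomogeneous, and since $\mu$ has the (p.g.p.), $\dsn{\cdot}$ obeys a ``$\delta$--$\ep$'' form of the triangle inequality along the lines already used repeatedly in the excerpt: for any $\ep>0$ there is $\delta>0$ with $\dsn{u+v}<\ep$ whenever $\dsn{u}\vee\dsn{v}<\delta$. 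I would state this once at the start and use it to reduce the problem to approximating $f$ by a finite linear combination of characteristic functions of basis sets, with rational (or more precisely, from a fixed countable dense subset of the reals) coefficients.

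Next I would construct $\calE$. Let $\calD=\{D_1,D_2,\dots\}$ be a countable basis of $\mu$. Let $\calE$ be the set of all functions of the form $\sum_{k=1}^{m}c_k\chi_{D_{j_k}}$ with $m\in\bbN$, $j_1,\dots,j_m\in\bbN$, and $c_1,\dots,c_m$ ranging over $\bbQ$; this is a countable subset of $\calS(X)\subset\calL^0(\mu)$. Given $f\in\calL^0(\mu)$ and $\ep>0$, first use order continuity of $\mu$ to approximate $f$ in $\mu$-measure (hence in $\dsn{\cdot}$) by a simple function $s=\sum_{k=1}^m a_k\chi_{A_k}$ with the $A_k\in\calA$ pairwise disjoint; this is exactly the first density theorem quoted just above the statement (density of $\calS(X)$ in $\calL^0(\mu)$). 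Then for each $k$ pick $D_{j_k}\in\calD$ with $\mu(A_k\triangle D_{j_k})$ as small as desired, and pick $c_k\in\bbQ$ with $|a_k-c_k|$ as small as desired; set $h:=\sum_{k=1}^m c_k\chi_{D_{j_k}}\in\calE$. Estimating $\dsn{s-h}$: on the set where the $A_k$ and $D_{j_k}$ agree, $s-h=\sum(a_k-c_k)\chi_{A_k}$, whose $\dsn{\cdot}$-prenorm is at most $\varphi(\max_k|a_k-c_k|)$ and hence small; off that set, $|s-h|$ is supported on $\bigcup_k (A_k\triangle D_{j_k})$, a set of small measure by the (p.g.p.) applied finitely many times, so that contribution is small by the bound $\dsn{c\chi_A}\le\varphi(\mu(A))$; combine the two pieces by the $\delta$--$\ep$ triangle inequality. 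Finally combine $\dsn{f-s}$ and $\dsn{s-h}$ the same way to get $\dsn{f-h}<\ep$. This proves the existence of $\calE$, and since $\dsn{\cdot}$ metrizes convergence in $\mu$-measure in the quantitative sense just shown, $\calL^0(\mu)$ is separable.

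For the quotient statement, assume in addition that $\mu$ is null-additive, so that by (6) of Proposition~\ref{PDS} the prenorm $\dsn{[f]}:=\dsn{f}$ on $L^0(\mu)$ is well defined. Then $\{[h]:h\in\calE\}$ is a countable subset of $L^0(\mu)$, and for any $[f]\in L^0(\mu)$ and $\ep>0$, choosing $h\in\calE$ with $\dsn{f-h}<\ep$ gives $\dsn{[f]-[h]}=\dsn{[f-h]}=\dsn{f-h}<\ep$; hence $L^0(\mu)$ is separable as well. The main obstacle, and the only place where genuine care is needed, is the triangle-inequality surrogate: since $\dsn{\cdot}$ does not satisfy the triangle inequality in general, each ``combine the errors'' step must be done through the (p.g.p.)-driven $\delta$--$\ep$ mechanism, and one must be sure to fix the tolerances in the right order (first fix $\ep$, then the $\delta$ for a two-term combination, then a further $\delta'$ for the finitely many set differences, then finally choose the basis sets and rational coefficients fine enough). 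Everything else — the countability of $\calE$, the density of $\calS(X)$, the elementary bounds on $\dsn{c\chi_A}$ — is routine and already available from the material preceding the statement.
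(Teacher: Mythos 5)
Your proposal is correct and follows essentially the same route the paper intends: the paper itself only cites \cite[Theorems~7.2 and~7.7]{Kawabe2021} for this statement, but your construction (rational combinations of characteristic functions of basis sets, density of $\calS(X)$ via order continuity, and error-combination through the (p.g.p.)-based $\delta$--$\ep$ surrogate for the triangle inequality) is exactly the template the paper itself uses in its proof of the analogous separability result for the Choquet--Lorentz space (Theorem~\ref{L-sep}). The only simplification available to you, which you implicitly exploit, is that for $\calL^0(\mu)$ one can even bound $\dsn{s-h}\leq\varphi\bigl(\max_k|a_k-c_k|+\mu\bigl(\bigcup_k(A_k\triangle D_{j_k})\bigr)\bigr)$ directly from the definition of $\dsn{\cdot}$, and no finiteness restriction on the basis sets is needed since $\dsn{\cdot}$ is always finite.
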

\section{The Choquet-Lorentz space}\label{CLspace}
In this section, a type of Lorentz space is defined by using the Choquet integral
instead of the Lebesgue integral, and its completeness and separability are discussed.
\begin{definition}\label{C-L}
Let $\mu\in\calM(X)$. Define the function $\pqn{\cdot}\colon\calF_0(X)\to [0,\infty]$ by
\[
\pqn{f}:=\left(\frac{p}{q}\right)^{1/q}\Ch(\mupq,|f|^q)^{1/q}
\]
for every $f\in\calF_0(X)$ and let
\[
\calLpq(\mu):=\{f\in\calF_0(X)\colon\pqn{f}<\infty\}.
\]
If the measure $\mu$ is needed to specify, then $\pqn{f}$ is written as $\pqnm{f}{\mu}$.
The space $\calLpq(\mu)$ is
called the \textit{Choquet-Lorentz space}\/ and the prenorm $\pqn{\cdot}$
on $\calLpq(\mu)$ is called the \textit{Choquet-Lorentz prenorm}.
In particular, if $p=q$ then the space $\calL^{q,q}(\mu)$ is
called the \textit{Choquet $\calL^q$ space}
and $\|\cdot\|_{q,q}$ is called the \textit{Choquet $\calL^q$ prenorm}.
They are denoted by $\calL^q(\mu)$ and $\qn{\cdot}$, respectively.
In other words,
\[
\qn{f}=\Ch(\mu,|f|^q)^{1/q}
\]
for every $f\in\calF_0(X)$ and
\[
\calLq(\mu)=\{f\in\calF_0(X)\colon\qn{f}<\infty\}.
\]
\end{definition}
\par
If the prenorm $\pqn{\cdot}$ on $\calLpq(\mu)$ is a quasi-seminorm,
then $\calLpq(\mu)$ is a real linear subspace of $\calF_0(X)$, but this is not the case in general. 
When $\mu$ is $\sigma$-additive, the Choquet-Lorentz space coincides with the Lorentz space
and the Choquet $\calL^q$ space coincides with the Lebesgue space
of all $q$-th order integrable functions,
both of which are defined by the abstract Lebesgue integral~\cite[Theorem~6.6]{C-R}.
By definition it follows that
\[
\pqnm{f}{\mu}=\left(\frac{p}{q}\right)^{1/q}\qnm{f}{\mupq}
\]
for every $f\in\calL^{p,q}(\mu)$ and that $\calL^{p,q}(\mu)=\calL^q(\mupq)$.
This observation is important and shows that some properties
of the Choquet Lorentz space may be deduced from those of the Choquet $\calL^q$ space;
see the proofs of Theorems~\ref{L-comp1}, \ref{L-dense}, and~\ref{L-sep}.
The prenorm $\pqn{\cdot}$ does not satisfy the triangle inequality in general
even if $\mu$ is $\sigma$-additive~\cite[Theorem~6.5]{C-R}.
\begin{proposition}\label{pqnorm}
Let $\mu\in\calM(X)$. Let $0<p<\infty$ and $0<q<\infty$.
\begin{enumerate}
\item[\us{(1)}] For any $A\in\calA$ it follows that
\[
\pqn{\chi_A}=\left(\dfrac{p}{q}\right)^{1/q}\mu(A)^{1/p}.
\]
\item[\us{(2)}] For any $f\in\calLpq(\mu)$ it follows that $\pqn{f}=0$
if and only if $\mu(\{|f|>c\})=0$ for every $c>0$;
they are equivalent to $\mu(\{|f|>0\})=0$ if $\mu$ null-continuous.
\item[\us{(3)}] For any $f\in\calLpq(\mu)$ and $c\in\bbR$
it follows that $\pqn{cf}=|c|\pqn{f}$.
Hence $\pqn{\cdot}$ is homogeneous.
\item[\us{(4)}] For any $f\in\calLpq(\mu)$ and $c>0$ it follows that
\[
\mu(\{|f|>c\})\leq\frac{1}{c^p}\left(\frac{q}{p}\right)^{p/q}\pqn{f}^p.
\]
\item[\us{(5)}] For any $f,g\in\calLpq(\mu)$, if $|f|\leq |g|$ then $\pqn{f}\leq\pqn{g}$.
\item[\us{(6)}] $\mu$ is weakly null-additive if and only if $\pqn{\cdot}$ is weakly null-additive.
\item[\us{(7)}] $\mu$ is null-additive if and only if $\pqn{\cdot}$ is null-additive.
\item[\us{(8)}] If $\mu$ is $K$-relaxed subadditive for some $K\geq 1$,
then $\pqn{\cdot}$ satisfies
the $2^{1+\frac{1}{p}+\frac{1}{q}}K^\frac{1}{p}$-relaxed triangle inequality.
Hence $\pqn{\cdot}$ is a quasi-seminorm.
\item[\us{(9)}] $\mu$ is null-additive if and only if it follows that $\pqn{f}=\pqn{g}$
whenever $f,g\in\calLpq(\mu)$ and $f\sim g$.
\end{enumerate}
\end{proposition}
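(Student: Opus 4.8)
The plan is to extract every one of the nine items from the elementary properties of the Choquet integral listed in Subsection~\ref{integrals}, together with the observation recorded in Subsection~\ref{measure} that $\mupq=\mu^{q/p}$ shares all of the relevant characteristics of $\mu$. Four items are immediate. Generativity gives $\Ch(\mupq,\chi_A)=\mupq(A)=\mu(A)^{q/p}$, which is~(1); positive homogeneousness gives $\Ch(\mupq,|cf|^q)=|c|^q\Ch(\mupq,|f|^q)$, which is~(3) after taking $q$-th roots; monotonicity applied to $|f|^q\le|g|^q$ is~(5); and for~(2) I would note that $\pqn{f}=0$ iff $\Ch(\mupq,|f|^q)=\int_0^\infty\mupq(\{|f|>t^{1/q}\})\,dt=0$, and, the integrand being nonincreasing in $t$, this is equivalent to $\mu(\{|f|>c\})=0$ for every $c>0$ (and, when $\mu$ is null-continuous, to $\mu(\{|f|>0\})=0$, since $\{|f|>0\}=\bigcup_\seqn\{|f|>1/n\}$ is an increasing union of $\mu$-null sets). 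For~(4), monotonicity yields $\Ch(\mupq,|f|^q)\ge\Ch(\mupq,c^q\chi_{\{|f|>c\}})=c^q\mu(\{|f|>c\})^{q/p}$; substituting $\Ch(\mupq,|f|^q)=(q/p)\pqn{f}^q$, rearranging, and raising to the power $p/q$ gives the Chebyshev-type bound.

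The real content is in the three ``null-additivity'' equivalences~(6), (7), and~(9); for each I would prove the forward implication via~(2) and sub-level-set inclusions, and the converse via characteristic functions and~(1). For~(6) forward: if $\pqn{f}=\pqn{g}=0$ then $\mu(\{|f|>c\})=\mu(\{|g|>c\})=0$ for all $c>0$ by~(2), and $\{|f+g|>c\}\subset\{|f|>c/2\}\cup\{|g|>c/2\}$ together with the weak null-additivity and monotonicity of $\mu$ forces $\pqn{f+g}=0$. For~(7) forward: $\mupq$ is null-additive and $\pqn{g}=0$ gives $\mupq(\{|g|>\ep\})=0$ for all $\ep>0$; combining the inclusions $\{|f+g|>t+\ep\}\subset\{|f|>t\}\cup\{|g|>\ep\}$ and $\{|f|>t+\ep\}\subset\{|f+g|>t\}\cup\{|g|>\ep\}$ with null-additivity of $\mupq$ shows that the two nonincreasing distribution functions $t\mapsto\mupq(\{|f+g|>t\})$ and $t\mapsto\mupq(\{|f|>t\})$ are interleaved, hence agree at every common point of continuity, so Lebesgue-almost everywhere; the transformation formula then gives $\Ch(\mupq,|f+g|^q)=\Ch(\mupq,|f|^q)$, i.e.\ $\pqn{f+g}=\pqn{f}$. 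Item~(9) forward is then immediate from~(7): $f\sim g$ implies $\pqn{f-g}=0$ by~(2), so $\pqn{f}=\pqn{g+(f-g)}=\pqn{g}$. For the converses, given $A,B\in\calA$ with $\mu(B)=0$ I would apply the hypothesis to the pair $\chi_{A\setminus B},\chi_B$ (for~(7), using $\chi_{A\setminus B}+\chi_B=\chi_{A\cup B}$ and monotonicity of $\mu$) or to $\chi_{A\cup B},\chi_A$ (for~(9), using $|\chi_{A\cup B}-\chi_A|\le\chi_B$); since by~(1) the prenorm of a characteristic function is an injective function of the measure of its set, the resulting prenorm equalities collapse to $\mu(A\cup B)=\mu(A)$, and for~(6) the analogous argument, with~(5) applied to $\chi_{A\cup B}\le\chi_A+\chi_B$, gives $\mu(A\cup B)=0$.

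For~(8) the plan is: from $(a+b)^r\le\max\{1,2^{r-1}\}(a^r+b^r)$ with $r=q/p$, the measure $\mupq$ is $2^{\max\{q/p-1,0\}}K^{q/p}$-relaxed subadditive; the relaxed subadditivity of the Choquet integral (Subsection~\ref{integrals}), applied after the pointwise estimate $|f+g|^q\le\max\{1,2^{q-1}\}(|f|^q+|g|^q)$, gives an inequality $\Ch(\mupq,|f+g|^q)\le c\bigl\{\Ch(\mupq,|f|^q)+\Ch(\mupq,|g|^q)\bigr\}$ with an explicit $c=c(K,p,q)$; multiplying through by $p/q$, taking $q$-th roots, and using $(u^q+v^q)^{1/q}\le 2^{1/q}(u+v)$ yields the relaxed triangle inequality for $\pqn{\cdot}$, and carrying the numerical factors through gives the stated constant $2^{1+\frac1p+\frac1q}K^{1/p}$; when $\mu$ is subadditive ($K=1$) the same computation, done with the sharp elementary inequalities, collapses to the plain triangle inequality.

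The step I expect to be the main obstacle is the forward direction of~(7): one must show that replacing $f$ by $f+g$, where $g$ has $\mu$-null level sets $\{|g|>c\}$ for every $c>0$, leaves $\Ch(\mupq,|f|^q)$ unchanged. This is precisely where the interleaving of the two monotone distribution functions, the null-additivity of $\mupq$, and the transformation formula must be combined; away from this, the only point requiring care is the bookkeeping of the explicit constant in~(8).
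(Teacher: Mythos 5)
Your treatment of items (1)--(7) and (9) is correct, and in fact more detailed than the paper, which dismisses (1)--(5) as easy, refers (6) and (7) to \cite[Proposition~3.2]{Kawabe2021}, and proves (9) by the same characteristic-function argument you give (the pair $\chi_{A\cup B},\chi_A$ with $|\chi_{A\cup B}-\chi_A|\leq\chi_B$). Your self-contained proof of the forward direction of (7) via the interleaved distribution functions $\mupq(\{|f+g|>t\})$ and $\mupq(\{|f|>t\})$, their agreement at common continuity points (hence Lebesgue-a.e., both being nonincreasing), and the transformation formula is sound and is a genuine contribution relative to what the paper writes down.

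The gap is in (8). The paper works directly with the inclusion $\{|f+g|^q>t\}\subset\{2^q|f|^q>t\}\cup\{2^q|g|^q>t\}$ at the level of the distribution function, applies the $K$-relaxed subadditivity of $\mu$ there, raises to the power $q/p$ with the elementary inequality \eqref{elementary}, and only then integrates and takes $q$-th roots; this yields exactly $2^{1+\frac1p+\frac1q}K^{\frac1p}$. Your route instead passes through the pointwise bound $|f+g|^q\leq\max\{1,2^{q-1}\}(|f|^q+|g|^q)$, the relaxed subadditivity of the Choquet integral (which costs an extra factor $2$), and a final splitting $(u^q+v^q)^{1/q}\leq 2^{1/q}(u+v)$. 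Tracking these factors gives a constant of order $2^{2/q}$ in $q$, not $2^{1/q}$: for instance with $q=1/10$ and $p=10$ your chain produces roughly $2^{20}K^{1/10}$ against the stated $2^{11.1}K^{1/10}$. So your argument does prove that $\pqn{\cdot}$ is a quasi-seminorm, but the claim that ``carrying the numerical factors through gives the stated constant'' fails for small $q$; to obtain the constant in the statement you must imitate the paper and exploit the union bound on level sets rather than the subadditivity of the integral. Separately, your closing remark that for $K=1$ the computation ``collapses to the plain triangle inequality'' is false: even with sharp elementary inequalities the constant does not reduce to $1$ (for $p=q=1$, $K=1$ your sharp version still gives $2$), and the paper explicitly notes that $\pqn{\cdot}$ fails the triangle inequality in general even when $\mu$ is $\sigma$-additive.
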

\begin{proof}
Assertions (1)--(5) are easy to prove
and assertions (6) and (7) can be derived
in the same manner as~\cite[Proposition~3.2]{Kawabe2021}.
\par
(8)\ Let $f,g\in\calLpq(\mu)$. For any $t>0$, we have
\[
\{|f+g|^q>t\}\subset\{2^q|f|^q>t\}\cup\{2^q|g|^q>t\},
\]
hence
\begin{equation}\label{eqn:pqnorm1}
\mu(\{|f+g|^q>t\})\leq K\bigl\{\mu(\{2^q|f|^q>t\})+\mu(\{2^q|g|^q>t\})\bigr\}
\end{equation}
by the $K$-relaxed subadditivity of $\mu$. It thus follows that
\begin{align*}
&\hspace*{-7mm}\left(\frac{q}{p}\right)^{1/q}\pqn{f+g}\\
&=\left(\int_0^\infty\mu(\{|f+g|^q>t\})^{q/p}dt\right)^{1/q}\\
&\leq K^\frac{1}{p}\left(\int_0^\infty\bigl\{\mu(\{2^q|f|^q>t\})
+\mu(\{2^q|g|^q>t\})\bigr\}^{q/p}dt\right)^{1/q}\\
&\leq 2^\frac{1}{p}K^\frac{1}{p}\left(\int_0^\infty\mu(\{2^q|f|^q>t\})^{q/p}dt
+\int_0^\infty\mu(\{2^q|g|^q>t\})^{q/p}dt\right)^{1/q}\\
&=2^\frac{1}{p}K^\frac{1}{p}\left(2^q\Ch(\mupq,|f|^q)+2^q
\Ch(\mupq,|g|^q)\right)^{1/q}\\
&\leq 2^\frac{1}{p}K^\frac{1}{p}2^{1+\frac{1}{q}}\left(\Ch(\mupq,|f|^q)^{1/q}
+\Ch(\mupq,|g|^q)^{1/q}\right)\\
&=2^{1+\frac{1}{p}+\frac{1}{q}}K^\frac{1}{p}
\left(\frac{q}{p}\right)^{1/q}\left(\pqn{f}+\pqn{g}\right),
\end{align*}
where the first inequality is due to~\eqref{eqn:pqnorm1} and the second and third inequalities
are due to the elementary inequality
\begin{equation}\label{elementary}
|a+b|^r\leq 2^r(|a|^r+|b|^r)
\end{equation}
that holds for every $a,b\in\bbR$ and $0<r<\infty$.
\par
(9)\ It can be proved in the same manner as (6) of Proposition~\ref{PDS}.
\end{proof}
The quotient space
\[
L^{p,q}(\mu):=\{[f]\colon f\in\calLpq(\mu)\}
\]
is defined by the equivalence relation stated in Subsection~\ref{equiv}.
Given an equivalence class $[f]\in L^{p,q}(\mu)$,
define the prenorm on $L^{p,q}(\mu)$ by $\pqn{[f]}:=\pqn{f}$,
which is well-defined if $\mu$ is null-additive by (9) of Proposition~\ref{pqnorm}.  
This prenorm has the same properties as the prenorm on $\calLpq(\mu)$ and 
separates points of $L^{p,q}(\mu)$, that is, for any $[f]\in L^{p,q}(\mu)$,
if $\pqn{[f]}=0$ then $[f]=0$.
\par
To show the completeness of the spaces $\calL^{p,q}(\mu)$ and $L^{p,q}(\mu)$
some suitable convergence theorems of
the Choquet integral are needed.
Recall that every nonadditive measure that is monotone autocontinuous from below is null-additive.
\begin{proposition}\label{Ch-conv}
Let $\mu\in\calM(X)$. Let $0<q<\infty$.
The following assertions are equivalent.
\begin{enumerate}
\item[\us{(i)}] $\mu$ is monotone autocontinuous from below.
\item[\us{(ii)}] For any $\{f_n\}_\seqn\subset\calF_0(X)$ and $f\in\calF_0(X)$,
if they satisfy
\begin{enumerate}
\item[\us{(a)}] for each $\seqn$ there is an $N_n\in\calA$ such that $\mu(N_n)=0$
and $f_n(x)\leq f_{n+1}(x)\leq f(x)$ for every $x\not\in N_n$,
\item[\us{(b)}] $f_n\mconv f$,
\end{enumerate}
then $\mu(\{f_n>t\})\uparrow\mu(\{f>t\})$ for every continuity\
point $t$ of the function $\mu(\{f>t\})$.
\item[\us{(iii)}] The Choquet $q$-th monotone nondecreasing
almost uniform convergence theorem holds
for $\mu$, that is, for any $\{f_n\}_\seqn\subset\calF_0^+(X)$ and $f\in\calF_0^+(X)$, 
if they satisfy condition \us{(a)}\/ and
\begin{enumerate}
\item[\us{(c)}] $f_n\to f$ $\mu$-a.u.,
\end{enumerate}
then $\Ch(\mu,f_n^q)\uparrow\Ch(\mu,f^q)$.
\end{enumerate}
\end{proposition}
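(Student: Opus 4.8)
The plan is to prove the cycle (iii) $\Rightarrow$ (i) $\Rightarrow$ (ii) $\Rightarrow$ (iii); the implications (iii) $\Rightarrow$ (i) and (ii) $\Rightarrow$ (iii) are short, and the substance lies in (i) $\Rightarrow$ (ii). For (iii) $\Rightarrow$ (i), given $A\in\calA$ and a nonincreasing $\{B_n\}\subset\calA$ with $\mu(B_n)\to 0$, I would apply (iii) to $f:=\chi_A$ and $f_n:=\chi_{A\setminus B_n}$: condition (a) holds with $N_n=\eset$ because $A\setminus B_n$ is nondecreasing, and $f_n\to f$ $\mu$-a.u.\ since $f_n=f$ on $X\setminus B_N$ for every $n\ge N$ while $\mu(B_N)$ can be made arbitrarily small. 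As $\chi_{A\setminus B_n}^q=\chi_{A\setminus B_n}$, generativity of the Choquet integral turns $\Ch(\mu,f_n^q)\uparrow\Ch(\mu,f^q)$ into $\mu(A\setminus B_n)\uparrow\mu(A)$, which is monotone autocontinuity from below. For (ii) $\Rightarrow$ (iii), $\mu$-almost uniform convergence implies convergence in $\mu$-measure, so (c) gives (b); hence (ii) yields $\mu(\{f_n>t\})\uparrow\mu(\{f>t\})$ at every continuity point of the nonincreasing function $t\mapsto\mu(\{f>t\})$, i.e.\ for Lebesgue-almost every $t$. Writing $\Ch(\mu,f_n^q)=\int_0^\infty qt^{q-1}\mu(\{f_n>t\})\,dt$ by the transformation formula and invoking the monotone convergence theorem of the Lebesgue integral then gives $\Ch(\mu,f_n^q)\uparrow\Ch(\mu,f^q)$.

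For (i) $\Rightarrow$ (ii), I would use only that monotone autocontinuity from below makes $\mu$ null-additive, hence weakly null-additive. Fix $\{f_n\}$, $f$ satisfying (a), (b) and a continuity point $t$ of $s\mapsto\mu(\{f>s\})$. From (a) one has $\{f_n>t\}\subset\{f_{n+1}>t\}\cup N_n$ and $\{f_n>t\}\subset\{f>t\}\cup N_n$, so null-additivity makes $\{\mu(\{f_n>t\})\}_\seqn$ nondecreasing and bounded above by $\mu(\{f>t\})$; let $L$ be its limit. It remains to show $L\ge\mu(\{f>t\})$. Fix $\ep>0$ and set $D_n:=\{|f_n-f|>\ep\}$. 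A pointwise check gives $\{f>t+\ep\}\setminus D_n\subset\{f_n>t\}$, and since $0\le f-f_{n+1}\le f-f_n$ off $N_n$ by (a), one gets $D_{n+1}\subset D_n\cup N_n$, while (b) gives $\mu(D_n)\to 0$.

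The main obstacle is that $\{D_n\}$ need not be nonincreasing, so monotone autocontinuity from below cannot be applied to it directly; moreover this weak form of autocontinuity does \emph{not} entail null-continuity, so one cannot discard $\bigcup_{k\ge n}N_k$ as a $\mu$-null set. The device I would use is to pass to $D_n':=D_n\setminus(N_1\cup\dots\cup N_{n-1})$ (with $D_1':=D_1$). Because a finite union of $\mu$-null sets is $\mu$-null by weak null-additivity and $D_n\setminus D_n'\subset N_1\cup\dots\cup N_{n-1}$, null-additivity gives $\mu(D_n')=\mu(D_n)\to 0$ and $\mu(\{f>t+\ep\}\setminus D_n')=\mu(\{f>t+\ep\}\setminus D_n)$, and a short computation from $D_{n+1}\subset D_n\cup N_n$ shows that $\{D_n'\}$ is genuinely nonincreasing. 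Then monotone autocontinuity from below applied to $\{D_n'\}$ yields $\mu(\{f>t+\ep\}\setminus D_n)=\mu(\{f>t+\ep\}\setminus D_n')\to\mu(\{f>t+\ep\})$, so $L\ge\mu(\{f>t+\ep\})$. Letting $\ep\downarrow 0$ and using that $t$ is a continuity point (so that $\sup_{\ep>0}\mu(\{f>t+\ep\})=\mu(\{f>t\})$) gives $L\ge\mu(\{f>t\})$, hence $\mu(\{f_n>t\})\uparrow\mu(\{f>t\})$. I expect the bookkeeping with the exceptional sets $N_n$ in this last step to be the only genuinely delicate part; the rest is routine.
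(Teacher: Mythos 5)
Your proposal is correct and follows essentially the same route as the paper: the same cycle of implications, the same characteristic-function test $f_n=\chi_{A\setminus B_n}$ for (iii)$\Rightarrow$(i), and the same transformation-formula-plus-monotone-convergence argument for (ii)$\Rightarrow$(iii). For (i)$\Rightarrow$(ii) your device of subtracting the accumulated null set $N_1\cup\dots\cup N_{n-1}$ from $D_n=\{|f_n-f|>\ep\}$ to obtain a genuinely nonincreasing sequence is the same bookkeeping the paper performs by first replacing the $N_n$ with a nondecreasing sequence of null sets and then working with $B_n\setminus N_n$.
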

\begin{proof}
In this proof, for each $t\geq 0$ and $\seqn$, let $\varphi_n(t):=\mu(\{f_n>t\})$
and $\varphi(t):=\mu(\{f>t\})$.
\par
(i)$\Rightarrow$(ii)\ Let $t_0\in\bbR$ be a continuity point of $\varphi$.
The null-additivity of $\mu$ and condition (a) imply that
\[
\varphi_n(t)\leq\varphi_{n+1}(t)\leq\varphi(t)
\]
for every $t\in\bbR$ and $\seqn$, which yields $\sup_\seqn\varphi_n(t_0)\leq\varphi(t_0)$.
It thus suffices to show
\begin{equation}\label{eqn:Ch-conv1}
\varphi(t_0)\leq\sup_\seqn\varphi_n(t_0).
\end{equation}
To see this, fix $\ep>0$ and let $A:=\{f>t_0+\ep\}$ and $B_n:=\{|f_n-f|>\ep\}$
for every $\seqn$.
Then $\mu(B_n)\to 0$ by conditions (b).
By condition~(a) and the null-additivity of $\mu$, one can find a nondecreasing sequence $\{N_n\}_\seqn$
of $\mu$-null sets such that $f_n(x)\leq f_{n+1}(x)\leq f(x)$ for every $x\not\in N_n$.
Then it is easy to verify that $\{B_n\setminus N_n\}_\seqn$ is nonincreasing and
$A\setminus (B_n\setminus N_n)\subset\{f_n>t_0\}\cup N_n$, so that
\begin{equation}\label{eqn:Ch-conv2}
\mu(A\setminus (B_n\setminus N_n))\leq\mu(\{f_n>t_0\})
\end{equation}
for every $\seqn$.
Furthermore, the monotone autocontinuity of $\mu$ from below yields
\begin{equation}\label{eqn:Ch-conv3}
\mu(A)=\sup_\seqn\mu(A\setminus (B_n\setminus N_n))
\end{equation}
since $\mu(B_n\setminus N_n)\to 0$.
Therefore it follows from~\eqref{eqn:Ch-conv2} and~\eqref{eqn:Ch-conv3} that
\[
\varphi(t_0+\ep)\leq\sup_\seqn\varphi_n(t_0),
\]
which implies~\eqref{eqn:Ch-conv1} since $t_0$ is a continuity point of $\varphi$.
\par
(ii)$\Rightarrow$(iii)\ Let $D$ be the set of all discontinuity points of $\varphi$.
Condition (c) always implies condition (b), so that
$\varphi_n(t)\uparrow\varphi(t)$ for every $t\not\in D$ by assertion~(ii).
Hence, we have
\[
\Ch(\mu,f_n^q)=\int_0^\infty qt^{q-1}\varphi_n(t)dt\uparrow
\int_0^\infty qt^{q-1}\varphi(t)dt=\Ch(\mu,f^q)
\]
by the Lebesgue monotone convergence theorem
and the transformation formula of the Choquet integral.
\par
(iii)$\Rightarrow$(i)\ Take $A,B_n\in\calA$ and assume that $\{B_n\}_\seqn$ is nonincreasing
and $\mu(B_n)\to 0$. For any $\seqn$, let $f_n:=\chi_{A\setminus B_n}$ and $f:=\chi_A$.
Then they satisfy conditions~(a) and~(c), hence by assertion~(iii) we have 
\[
\mu(A\setminus B_n)=\Ch(\mu,f_n^q)\to\Ch(\mu,f^q)=\mu(A).
\]
Therefore, $\mu$ is monotone autocontinuous from below.
\end{proof}
\begin{remark}
See~\cite[Theorem~3.5]{Rebille} for a primitive form of Proposition~\ref{Ch-conv}.
\end{remark}
\begin{proposition}\label{C-Fatou}
Let $\mu\in\calM(X)$. Let $0<q<\infty$.
The following assertions are equivalent.
\begin{enumerate}
\item[\us{(i)}] $\mu$ is monotone autocontinuous from below.
\item[\us{(ii)}] The Choquet $q$-th Fatou almost uniform convergence lemma
holds for $\mu$, that is,
for any $\{f_n\}_\seqn\subset\calF_0^+(X)$ and $f\in\calF_0^+(X)$,
if $f_n\to f$ $\mu$-a.u., then $\Ch(\mu,f^q)\leq\liminf_\ninfty\Ch(\mu,f_n^q)$.
\end{enumerate}
\end{proposition}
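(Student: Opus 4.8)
The plan is to establish the two implications separately, reducing the substantive direction (i)$\Rightarrow$(ii) to the monotone nondecreasing almost uniform convergence theorem provided by Proposition~\ref{Ch-conv}.

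For (i)$\Rightarrow$(ii): assume $\mu$ is monotone autocontinuous from below and let $\{f_n\}_\seqn\subset\calF_0^+(X)$ and $f\in\calF_0^+(X)$ satisfy $f_n\to f$ $\mu$-a.u. First I would pass to the running infima $g_n:=\inf_{k\geq n}f_k$; these functions are $\calA$-measurable, and since $0\leq g_n\leq g_{n+1}$ everywhere and $g_n\leq f_n$ is finite-valued, they belong to $\calF_0^+(X)$. The crux is to verify that $\{g_n\}_\seqn$ and $f$ satisfy the hypotheses of assertion~(iii) of Proposition~\ref{Ch-conv}. For condition~(a): $\mu$-almost uniform convergence implies $\mu$-almost everywhere convergence, so there is an $N\in\calA$ with $\mu(N)=0$ and $f_k(x)\to f(x)$ for every $x\not\in N$; then $g_n(x)=\inf_{k\geq n}f_k(x)\leq\liminf_{k\to\infty}f_k(x)=f(x)$ for $x\not\in N$, so $g_n\leq g_{n+1}\leq f$ off the fixed $\mu$-null set $N$. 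For condition~(c): whenever $f_k\to f$ uniformly on $X\setminus E$ for some $E\in\calA$, the elementary estimate $|g_n(x)-f(x)|\leq\sup_{k\geq n}|f_k(x)-f(x)|$ shows $g_n\to f$ uniformly on $X\setminus E$ as well, hence $g_n\to f$ $\mu$-a.u. Proposition~\ref{Ch-conv} then yields $\Ch(\mu,g_n^q)\uparrow\Ch(\mu,f^q)$, and since $g_n\leq f_n$ gives $\Ch(\mu,g_n^q)\leq\Ch(\mu,f_n^q)$ by monotonicity of the Choquet integral, we conclude $\Ch(\mu,f^q)=\lim_{n\to\infty}\Ch(\mu,g_n^q)\leq\liminf_{n\to\infty}\Ch(\mu,f_n^q)$.

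For (ii)$\Rightarrow$(i): assume the Choquet $q$-th Fatou almost uniform convergence lemma holds, and take $A,B_n\in\calA$ with $\{B_n\}_\seqn$ nonincreasing and $\mu(B_n)\to 0$; the goal is $\mu(A\setminus B_n)\to\mu(A)$. I would apply~(ii) to $f_n:=\chi_{A\setminus B_n}$ and $f:=\chi_A$, which lie in $\calF_0^+(X)$. Given $\ep>0$, choose $n_0\in\bbN$ with $\mu(B_{n_0})<\ep$; then for $n\geq n_0$ we have $B_n\subset B_{n_0}$, so $f_n(x)=f(x)$ for every $x\not\in B_{n_0}$, that is, $f_n\to f$ uniformly on $X\setminus B_{n_0}$, and therefore $f_n\to f$ $\mu$-a.u. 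Using $\chi_C^q=\chi_C$ together with the generativity of the Choquet integral, assertion~(ii) gives $\mu(A)=\Ch(\mu,f^q)\leq\liminf_{n\to\infty}\Ch(\mu,f_n^q)=\liminf_{n\to\infty}\mu(A\setminus B_n)$, and combining this with $\mu(A\setminus B_n)\leq\mu(A)$ from monotonicity forces $\mu(A\setminus B_n)\to\mu(A)$, as required.

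The only genuine work is in the first implication, and specifically in checking that the running infima $g_n$ retain $\mu$-almost uniform convergence to $f$ and are dominated by $f$ off a single $\mu$-null set, so that Proposition~\ref{Ch-conv} applies without modification; the rest is routine manipulation with the monotonicity and generativity of the Choquet integral.
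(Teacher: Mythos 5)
Your proposal is correct and follows essentially the same route as the paper: for (i)$\Rightarrow$(ii) you pass to $g_n:=\inf_{k\geq n}f_k$ and invoke the monotone nondecreasing almost uniform convergence theorem of Proposition~\ref{Ch-conv}, and for (ii)$\Rightarrow$(i) you test the Fatou lemma on $f_n=\chi_{A\setminus B_n}$ and $f=\chi_A$, exactly as the paper does. The only difference is that you spell out the verification of conditions (a) and (c) for the sequence $\{g_n\}_\seqn$, which the paper leaves implicit.
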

\begin{proof}
(i)$\Rightarrow$(ii)\ For each $\seqn$, let $g_n:=\inf_{k\geq n}f_k$.
Since $\{g_n\}_\seqn$ and $f$ satisfy conditions (a) and (c) of Proposition~\ref{Ch-conv},
it follows that
\[
\Ch(\mu,f^q)
=\lim_\ninfty\Ch(\mu,g_n^q)=\lim_\ninfty\Ch(\mu,\inf_{k\geq n}f_k^q)\leq\liminf_\ninfty\Ch(\mu,f_n^q).
\]
\par
(ii)$\Rightarrow$(i)\ Take $A,B_n\in\calA$ and assume that $\{B_n\}_\seqn$
is nonincreasing and
$\mu(B_n)\to 0$. For any $\seqn$, let $f_n:=\chi_{A\setminus B_n}$ and $f:=\chi_A$.
Then $f_n\to f$ $\mu$-a.u., so that assertion (ii) yields
\begin{align*}
\mu(A)
&=\Ch(\mu,f^q)\leq\liminf_\ninfty\Ch(\mu,f_n^q)\\
&=\liminf_\ninfty\mu(A\setminus B_n)\leq\limsup_\ninfty\mu(A\setminus B_n)\leq\mu(A).
\end{align*}
Therefore $\mu$ is monotone autocontinuous from below.
\end{proof}
\begin{theorem}\label{L-comp1}
Let $\mu\in\calM(X)$. Let $0<p<\infty$ and $0<q<\infty$.
Assume that $\mu$ is monotone autocontinuous from below and has property~(C) and the (p.g.p.).
Then $\calL^{p,q}(\mu)$ and $L^{p,q}(\mu)$ are quasi-complete.
\end{theorem}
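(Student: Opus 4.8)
The plan is first to reduce to the Choquet $\calLq$ space. Since $\calLpq(\mu)=\calLq(\mupq)$ and $\pqnm{f}{\mu}=(p/q)^{1/q}\qnm{f}{\mupq}$, and since $\mupq$ inherits from $\mu$ the monotone autocontinuity from below, property~(C), the (p.g.p.), and hence the null-additivity, it suffices to show that $\calLq(\mu)$ and $L^q(\mu)$ are quasi-complete whenever $\mu$ is monotone autocontinuous from below and has property~(C) and the (p.g.p.); the general case then follows by replacing $\mu$ with $\mupq$.

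So let $\{f_n\}_\seqn\subset\calLq(\mu)$ be a bounded Cauchy sequence and put $M:=\sup_\seqn\qn{f_n}<\infty$. First I would use the Chebyshev-type inequality in (4) of Proposition~\ref{pqnorm} (with $p=q$), which gives $\mu(\{|f_m-f_n|>c\})\leq c^{-q}\qn{f_m-f_n}^q$ for every $c>0$, to conclude from the Cauchy condition that $\{f_n\}_\seqn$ is Cauchy in $\mu$-measure. Because $\mu$ has property~(C) and the (p.g.p.), Theorem~\ref{comp} then produces a subsequence $\{f_{n_k}\}_\seqk$ converging $\mu$-almost uniformly to some $g\in\calF_0(X)$. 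The key tool from here on is the Choquet $q$-th Fatou almost uniform convergence lemma (Proposition~\ref{C-Fatou}), which is available precisely because $\mu$ is monotone autocontinuous from below. Since $|f_{n_k}|\to|g|$ $\mu$-almost uniformly, it yields $\qn{g}^q=\Ch(\mu,|g|^q)\leq\liminf_\kinfty\Ch(\mu,|f_{n_k}|^q)=\liminf_\kinfty\qn{f_{n_k}}^q\leq M^q$, so $g\in\calLq(\mu)$. Next, fixing $n\in\bbN$ and letting $k\to\infty$, one has $|f_{n_k}-f_n|\to|g-f_n|$ $\mu$-almost uniformly, so Proposition~\ref{C-Fatou} gives $\qn{g-f_n}^q=\Ch(\mu,|g-f_n|^q)\leq\liminf_\kinfty\qn{f_{n_k}-f_n}^q$; given $\ep>0$, choosing $n_0$ with $\qn{f_m-f_l}<\ep$ for all $m,l\geq n_0$ makes the right-hand side at most $\ep^q$ once $n\geq n_0$, whence $\qn{f_n-g}\to 0$. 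Thus $\calLq(\mu)$ is quasi-complete.

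For the quotient space, monotone autocontinuity from below makes $\mu$ null-additive, so $L^q(\mu)$ and the prenorm $\qn{[f]}:=\qn{f}$ are well defined; a bounded Cauchy sequence $\{[f_n]\}_\seqn$ in $L^q(\mu)$ lifts, using $\qn{[f_m]-[f_n]}=\qn{f_m-f_n}$, to a bounded Cauchy sequence $\{f_n\}_\seqn$ in $\calLq(\mu)$, and the limit $g$ obtained above satisfies $[f_n]\to[g]$ in $L^q(\mu)$. The reduction of the first paragraph then yields the statement for both $\calLpq(\mu)$ and $L^{p,q}(\mu)$. I expect the delicate point to be the absence of any, even relaxed, triangle inequality: it rules out telescoping, which is why one must feed the differences $|f_{n_k}-f_n|$ themselves into the Fatou lemma instead of estimating $\qn{f_n-g}$ through intermediate terms; it is also the reason boundedness cannot be dispensed with, since it is used only to force $g\in\calLq(\mu)$ through the Fatou estimate, and that is exactly why the conclusion is quasi-completeness rather than completeness.
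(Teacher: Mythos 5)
Your proposal is correct and follows essentially the same route as the paper's proof: reduction to the Choquet $\calL^q$ case via $\calL^{p,q}(\mu)=\calL^q(\mupq)$, extraction of a $\mu$-almost uniformly convergent subsequence via Theorem~\ref{comp}, and two applications of the Fatou-type lemma of Proposition~\ref{C-Fatou} (one for membership of the limit in $\calL^q(\mu)$, one for convergence in prenorm). The only difference is the order in which the steps are arranged, which is immaterial.
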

\begin{proof}
We first prove the conclusion for $\calLq(\mu)$.
Let $\{f_n\}_\seqn\subset\calLq(\mu)$ be bounded and Cauchy.
By (4) of Proposition~\ref{pqnorm}, the sequence $\{f_n\}_\seqn$ is Cauchy in $\mu$-measure,
so that by Theorem~\ref{comp} one can find
a subsequence $\{f_{n_k}\}_\seqk$ of $\{f_n\}_\seqn$
and a function $f\in\calF_0(X)$ such that $f_{n_k}\to f$ $\mu$-a.u.
\par
Let $\ep>0$.
Since $\{f_n\}_\seqn$ is Cauchy,
there is an $n_0\in\bbN$ such that if $m,n\geq n_0$ then
\begin{equation}\label{eqn:L-comp1}
\Ch(\mu,|f_m-f_n|^q)<\ep.
\end{equation}
For any fixed $n$ with $n\geq n_0$, we have $|f_{n_k}-f_n|\to |f-f_n|$ $\mu$-a.u.,
hence it follows from the monotone autocontinuity of $\mu$ from below
and Proposition~\ref{C-Fatou} that
\begin{align}\label{eqn:L-comp2}
\Ch(\mu,|f-f_n|^q)
&\leq\liminf_\kinfty\Ch(\mu,|f_{n_k}-f_n|^q)\nonumber\\
&\leq\limsup_\kinfty\Ch(\mu,|f_{n_k}-f_n|^q)\nonumber\\
&\leq\sup_{k\geq l}\Ch(\mu,|f_{n_k}-f_n|^q) 
\end{align}
for every $\seql$.
Since $n_k\to\infty$, there is a $k_0\in\bbN$ such that $n_k\geq n_0$ for every $k\geq k_0$,
hence it follows from \eqref{eqn:L-comp1} and \eqref{eqn:L-comp2} that
\[
\Ch(\mu,|f-f_n|^q)\leq\sup_{k\geq k_0}\Ch(\mu,|f_{n_k}-f_n|^q)\leq\ep,
\]
which yields $\qn{f-f_n}\to 0$.
\par
Next we show that $f\in\calLq(\mu)$.
Since $\{f_n\}_\seqn$ is bounded and $|f_{n_k}|\to |f|$ $\mu$-a.u., Proposition~\ref{C-Fatou} implies
\[
\qn{f}^q=\Ch(\mu,|f|^q)\leq\liminf_\kinfty\Ch(\mu,|f_{n_k}|^q)\leq
\sup_\seqn\qn{f_n}^q<\infty.
\]
Hence $f\in\calLq(\mu)$. Thus $\calLq(\mu)$ is quasi-complete.
Since every nonadditive measure that is monotone autocontinuous from below is null-additive,
the quotient space $L^q(\mu)$ and the quotient prenorm $\qn{\cdot}$ are well-defined,
and it turns out that $L^q(\mu)$ is quasi-complete.
\par
Recall that 
\[
\pqnm{f}{\mu}=\left(\frac{p}{q}\right)^{1/q}\qnm{f}{\mupq}
\]
for every $f\in\calLpq(\mu)$ and that $\calLpq(\mu)=\calLq(\mupq)$.
The conclusion for $\calLpq(\mu)$ and $L^{p,q}(\mu)$ thus follows
since $\mupq$ is monotone autocontinuous from below
and has property~(C) and the (p.q.p.)~if and only if so is $\mu$.
\end{proof}
We now reach the completeness of the spaces $\calLpq(\mu)$ and $L^{p,q}(\mu)$.
\begin{corollary}\label{L-comp2}
Let $\mu\in\calM(X)$. Let $0<p<\infty$ and $0<q<\infty$.
Assume that $\mu$ is relaxed subadditive, monotone autocontinuous from below,
and has property~(C).
Then $\calLpq(\mu)$ is complete with respect to the quasi-seminorm $\pqn{\cdot}$
and $L^{p,q}(\mu)$ is complete with respect to the quasi-norm $\pqn{\cdot}$.
\end{corollary}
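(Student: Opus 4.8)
The plan is to deduce this from Theorem~\ref{L-comp1} by noting that, once $\mu$ is relaxed subadditive, every Cauchy sequence is automatically bounded, so that the quasi-completeness supplied by Theorem~\ref{L-comp1} upgrades to genuine completeness. First I would check that the hypotheses of Theorem~\ref{L-comp1} are met: a relaxed subadditive $\mu$ has the (p.g.p.) (as recorded in Subsection~\ref{measure}), and it is assumed monotone autocontinuous from below and to have property~(C), so Theorem~\ref{L-comp1} applies and both $\calLpq(\mu)$ and $L^{p,q}(\mu)$ are quasi-complete. I would also recall that relaxed subadditivity is what makes the prenorm well-behaved: if $\mu$ is $K$-relaxed subadditive, then by Proposition~\ref{pqnorm}(8) the prenorm $\pqn{\cdot}$ satisfies the $2^{1+\frac1p+\frac1q}K^{\frac1p}$-relaxed triangle inequality, hence is a quasi-seminorm on $\calLpq(\mu)$; and since monotone autocontinuity from below forces $\mu$ to be null-additive, the quotient prenorm on $L^{p,q}(\mu)$ is well-defined by Proposition~\ref{pqnorm}(9) and, separating points, is a quasi-norm.

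The one genuinely new observation is elementary: in any prenormed space whose prenorm obeys a $K'$-relaxed triangle inequality, every Cauchy sequence is bounded. Given a Cauchy sequence $\{x_n\}_\seqn$, choose $n_0$ with $\norm{x_m-x_n}<1$ for all $m,n\geq n_0$; then for $n\geq n_0$, writing $x_n=(x_n-x_{n_0})+x_{n_0}$ and applying the relaxed triangle inequality gives $\norm{x_n}\leq K'\bigl(1+\norm{x_{n_0}}\bigr)$, while $\norm{x_1},\dots,\norm{x_{n_0-1}}$ are finitely many finite numbers, so $\sup_\seqn\norm{x_n}<\infty$. Applying this to $\calLpq(\mu)$ and to $L^{p,q}(\mu)$ with the relaxation constant from Proposition~\ref{pqnorm}(8), any Cauchy sequence there is bounded, hence bounded and Cauchy, hence convergent by the quasi-completeness from Theorem~\ref{L-comp1}. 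Therefore $\calLpq(\mu)$ is complete with respect to the quasi-seminorm $\pqn{\cdot}$ and $L^{p,q}(\mu)$ is complete with respect to the quasi-norm $\pqn{\cdot}$.

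There is no serious obstacle in this corollary; it is a packaging of Theorem~\ref{L-comp1}. The only point worth stating explicitly is that the relaxed subadditivity hypothesis does double duty: it supplies the (p.g.p.) needed to invoke Theorem~\ref{L-comp1}, and it makes $\pqn{\cdot}$ a bona fide quasi-(semi)norm so that the boundedness argument for Cauchy sequences goes through — and it is precisely this second role that lets us drop the word ``bounded'' from the conclusion of Theorem~\ref{L-comp1}.
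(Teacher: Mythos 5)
Your proposal is correct and follows essentially the same route as the paper: invoke Proposition~\ref{pqnorm}(8) to get the quasi-seminorm, note that relaxed subadditivity yields the (p.g.p.) and monotone autocontinuity from below yields null-additivity, observe that Cauchy sequences are bounded under a relaxed triangle inequality, and then apply the quasi-completeness of Theorem~\ref{L-comp1}. The only difference is that you spell out the boundedness-of-Cauchy-sequences argument explicitly, which the paper leaves as a one-line assertion.
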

\begin{proof}
By assumption, $\mu$ is also null-additive and has the (p.g.p.).
Furthermore, by (8) of Proposition~\ref{pqnorm} the prenorm $\pqn{\cdot}$
is a quasi-seminorm on $\calLpq(\mu)$, hence a quasi-norm on $L^{p,q}(\mu)$.
Therefore, every Cauchy sequence is bounded with respect to $\pqn{\cdot}$.
The conclusion thus follows from Theorem~\ref{L-comp1}.
\end{proof}
\begin{example}
Let $X:=\bbN$ and $\calA:=2^\bbN$.
Let $\mu$ be the nonadditive measure given in Proposition~\ref{counter1}.
Then $\mu$ is subadditive, hence relaxed subadditive, monotone autocontinuous from below,
and has the (p.g.p.),
while it does not have property~(C).
For each $\seqn$, let $A_n:=\{1,2,\dots,n\}$ and $f_n:=\chi_{A_n}$.
Then it follows from (4) of Proposition~\ref{counter1}
that $\{f_n\}_\seqn$ does not converge in $\mu$-measure.
Let $0<p<\infty$ and $0<q<\infty$. By (1) of Proposition~\ref{pqnorm} we have
\[
\pqn{f_n}=\left(\frac{p}{q}\right)^{1/q}\left(\sum_{i=1}^n\frac{1}{2^i}\right)^{1/p}
\]
and
\[
\pqn{f_{n+l}-f_n}=\left(\frac{p}{q}\right)^{1/q}
\left(\sum_{i=n+1}^{n+l}\frac{1}{2^i}\right)^{1/p}
\]
for every $n,l\in\bbN$, hence the sequence
$\{f_n\}_\seqn\subset\calLpq(\mu)$ is bounded and Cauchy.
Suppose, contrary to our claim, that $\calLpq(\mu)$ is quasi-complete.
Then, $\{f_n\}_\seqn$ converges with respect to $\pqn{\cdot}$,
thus it converges in $\mu$-measure by (4) of Proposition~\ref{pqnorm},
which is impossible.
Therefore $\calLpq(\mu)$ and $L^{p,q}(\mu)$ are not quasi-complete.
This means that property~(C) cannot be dropped 
in Theorem~\ref{L-comp1} and Corollary~\ref{L-comp2}.
\end{example}
We turn to the separability of the Choquet-Lorentz space.
For the proof a suitable convergence theorem is needed this time too.
\begin{proposition}\label{monotone}
Let $\mu\in\calM(X)$. The following assertions are equivalent.
\begin{enumerate}
\item[\us{(i)}] $\mu$ is conditionally order continuous.
\item[\us{(ii)}] For any $\{f_n\}_\seqn\subset\calF_0^+(X)$, if it satisfies
\begin{enumerate}
\item[\us{(a)}] $f_{n+1}(x)\leq f_n(x)$ for every $x\in X$ and $\seqn$,
\item[\us{(b)}] $f_n(x)\to 0$ for every $x\in X$,
\item[\us{(c)}] $\Ch(\mu,f_1)<\infty$,
\end{enumerate}
then $\Ch(\mu,f_n)\to 0$.
\end{enumerate}
\end{proposition}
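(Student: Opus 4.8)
The plan is to prove the two implications separately by means of the layer-cake representation $\Ch(\mu,g)=\int_0^\infty\mu(\{g>t\})\,dt$, working with the distribution functions of the $f_n$ and their monotone behaviour in $n$.

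For (i)$\Rightarrow$(ii) I would assume $\mu$ conditionally order continuous and take $\{f_n\}_\seqn\subset\calF_0^+(X)$ satisfying (a)--(c). Writing $\psi_n(t):=\mu(\{f_n>t\})$ for $t>0$, each $\psi_n$ is nonincreasing in $t$, hence Lebesgue measurable; condition (a) makes $\{\psi_n\}_\seqn$ pointwise nonincreasing, and condition (c) says $\int_0^\infty\psi_1(t)\,dt=\Ch(\mu,f_1)<\infty$. The first observation is that $\psi_1(t)<\infty$ for every $t>0$: since $\psi_1$ is nonincreasing, a value $\psi_1(t_0)=\infty$ would force $\psi_1\equiv\infty$ on $(0,t_0]$ and make the integral diverge. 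Next, for each fixed $t>0$, condition (a) makes $\{f_n>t\}$ a nonincreasing sequence of sets and condition (b) makes $\bigcap_{n=1}^\infty\{f_n>t\}=\eset$, so $\{f_n>t\}\downarrow\eset$ with $\mu(\{f_1>t\})<\infty$; conditional order continuity then gives $\psi_n(t)\to0$ as $\ninfty$. Finally, since $0\le\psi_n\le\psi_1$ on $(0,\infty)$ with $\psi_1$ Lebesgue integrable, the Lebesgue dominated convergence theorem yields $\Ch(\mu,f_n)=\int_0^\infty\psi_n(t)\,dt\to0$, which is (ii).

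For (ii)$\Rightarrow$(i) I would specialize to indicator functions: given $\{A_n\}_\seqn\subset\calA$ with $A_n\downarrow\eset$ and $\mu(A_1)<\infty$, set $f_n:=\chi_{A_n}$. Then $f_{n+1}\le f_n$ pointwise (as $A_{n+1}\subset A_n$), $f_n(x)\to0$ for every $x\in X$ (as $\bigcap_{n=1}^\infty A_n=\eset$), and $\Ch(\mu,f_1)=\mu(A_1)<\infty$ by the generativity of the Choquet integral, so (a)--(c) hold; assertion (ii) then gives $\mu(A_n)=\Ch(\mu,\chi_{A_n})\to0$, i.e.\ $\mu$ is conditionally order continuous.

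No convergence theorem for $\mu$ beyond the definitions is needed, so I do not expect a genuine obstacle; the only points calling for a little care are the finiteness of the distribution function $\psi_1$ at every level $t>0$ (which is exactly what permits the layerwise application of conditional order continuity) and the identification of $\psi_1$ as an integrable Lebesgue majorant so that dominated convergence is legitimate.
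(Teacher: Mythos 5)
Your proof is correct and follows essentially the same route as the paper: layerwise application of conditional order continuity to the nonincreasing sets $\{f_n>t\}$ followed by a convergence theorem for the Lebesgue integral (the paper cites monotone convergence with the integrable majorant $\mu(\{f_1\geq t\})$, you cite dominated convergence — the same argument), and the identical indicator-function specialization for the converse. The only cosmetic difference is that you derive finiteness of the distribution function from its monotonicity rather than from the Chebyshev-type bound $\mu(\{f_1\geq t\})\leq\Ch(\mu,f_1)/t$; both are fine.
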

\begin{proof}
(i)$\Rightarrow$(ii)\ For any $t>0$, condition (c) yields
\[
\mu(\{f_1\geq t\})\leq\frac{\Ch(\mu,f_1)}{t}<\infty,
\]
hence conditions (a) and (b) imply that $\mu(\{f_n\geq t\})\downarrow 0$
since $\mu$ is conditionally order continuous.
Furthermore, by condition (c) we have
\[
\int_0^\infty\mu(\{f_1\geq t\})=\Ch(\mu,f_1)<\infty,
\]
so that
\[
\Ch(\mu,f_n)=\int_0^\infty\mu(\{f_n\geq t\})dt\to 0
\]
by the monotone convergence theorem for the Lebesgue integral.
\par
(ii)$\Rightarrow$(i)\ Take $A_n\in\calA$ and assume that $A_n\downarrow\eset$ and $\mu(A_1)<\infty$.
For each $\seqn$, let $f_n:=\chi_{A_n}$.
Then the sequence $\{f_n\}_\seqn$ satisfies conditions (a)--(c), so that $\mu(A_n)=\Ch(\mu,f_n)\to 0$.
Hence $\mu$ is conditionally order continuous.
\end{proof}
\begin{theorem}\label{L-dense}
Let $\mu\in\calM(X)$. Let $0<p<\infty$ and $0<q<\infty$.
Assume that $\mu$ is conditionally order continuous.
Then $\calS(X)\cap\calL^{p,q}(\mu)$ is dense in $\calL^{p,q}(\mu)$.
If $\mu$ is additionally assumed to be null-additive, then $S(X)\cap L^{p,q}(\mu)$ is dense in $L^{p,q}(\mu)$.
\end{theorem}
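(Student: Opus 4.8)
The plan is to approximate an arbitrary $f\in\calLpq(\mu)$ from below in absolute value by simple functions and then invoke the Choquet monotone convergence theorem of Proposition~\ref{monotone}. Recall that $\pqn{f}^q=(p/q)\Ch(\mupq,|f|^q)$ and $\calLpq(\mu)=\{f\in\calF_0(X)\colon\Ch(\mupq,|f|^q)<\infty\}$, and that $\mupq$ inherits conditional order continuity from $\mu$ (the defining condition is equivalent for $\mu$ and $\mu^{q/p}$ since $q/p>0$). So all the work takes place with respect to the Choquet integral $\Ch(\mupq,\cdot)$.

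First I would build the approximants. Fix $f\in\calLpq(\mu)$. For each $\seqn$ set $g_n:=\bigl(2^{-n}\lfloor 2^n|f|\rfloor\bigr)\wedge n$; this is a nonnegative $\calA$-measurable simple function with $0\le g_n\le g_{n+1}\le|f|$ and $g_n(x)\to|f(x)|$ for every $x\in X$. Put $h_n:=(\operatorname{sgn}f)\,g_n$, which lies in $\calS(X)$ since it is $\calA$-measurable and takes finitely many values. Separating the cases $f(x)\ge0$ and $f(x)<0$ one checks at once that $|f-h_n|=|f|-g_n$ for every $\seqn$; in particular $|f-h_n|$ decreases pointwise to $0$. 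Moreover $|h_n|^q=g_n^q\le|f|^q$, so by monotonicity of the Choquet integral $\Ch(\mupq,|h_n|^q)\le\Ch(\mupq,|f|^q)<\infty$, i.e.\ $h_n\in\calS(X)\cap\calLpq(\mu)$.

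Next I would apply Proposition~\ref{monotone} to the sequence $\{(|f|-g_n)^q\}_\seqn\subset\calF_0^+(X)$: it is pointwise nonincreasing, converges pointwise to $0$, and satisfies $\Ch(\mupq,(|f|-g_1)^q)\le\Ch(\mupq,|f|^q)<\infty$. Since $\mupq$ is conditionally order continuous, the implication (i)$\Rightarrow$(ii) of Proposition~\ref{monotone} gives $\Ch(\mupq,|f-h_n|^q)=\Ch(\mupq,(|f|-g_n)^q)\to0$, hence $\pqn{f-h_n}\to0$. This proves that $\calS(X)\cap\calLpq(\mu)$ is dense in $\calLpq(\mu)$. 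For the quotient space, when $\mu$ is null-additive the map $[f]\mapsto\pqn{[f]}:=\pqn{f}$ on $L^{p,q}(\mu)$ is well-defined by (9) of Proposition~\ref{pqnorm}, and $S(X)\cap L^{p,q}(\mu)=\{[h]\colon h\in\calS(X)\cap\calLpq(\mu)\}$; applying the above to a representative $f$ of $[f]$ yields $\pqn{[f]-[h_n]}=\pqn{f-h_n}\to0$ with $[h_n]\in S(X)\cap L^{p,q}(\mu)$, so $S(X)\cap L^{p,q}(\mu)$ is dense in $L^{p,q}(\mu)$.

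The only step requiring care is the construction of $h_n$: the sign must be inserted so that the error $|f-h_n|$ equals exactly $|f|-g_n$ and therefore inherits pointwise monotonicity together with $\Ch(\mupq,|f-h_1|^q)<\infty$, which is precisely what makes Proposition~\ref{monotone} applicable. One cannot instead approximate $f^+$ and $f^-$ separately, because $\calLpq(\mu)$ is in general not a linear space and $\pqn{\cdot}$ need not obey any (relaxed) triangle inequality. Everything else is routine.
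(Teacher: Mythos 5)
Your proof is correct and follows essentially the same route as the paper: approximate $f$ by simple functions dominated by $|f|$, reduce everything to $\Ch(\mupq,\cdot)$ via $\calLpq(\mu)=\calLq(\mupq)$, and invoke Proposition~\ref{monotone} to send the error to zero. The only cosmetic difference is that you build the approximants so that $|f-h_n|=|f|-g_n$ is monotone from the outset, whereas the paper takes an arbitrary dominated approximating sequence and recovers monotonicity by applying Proposition~\ref{monotone} to the tail suprema $\sup_{k\geq n}|f-h_k|^q$.
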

\begin{proof}
We first prove the conclusion for $\calLq(\mu)$.
Let $f\in\calLq(\mu)$ and take a sequence $\{h_n\}_\seqn\subset\calS(X)$ such that
$|h_n(x)|\leq|f(x)|$ for every $\seqn$ and that $h_n(x)\to f(x)$ for every $x\in X$.
For any $x\in X$ and $\seqn$, let
$g_n(x):=\sup_{k\geq n}|f(x)-h_k(x)|^q$.
Then the sequence $\{g_n\}_\seqn$ satisfies conditions (a)--(c) of Proposition~\ref{monotone},
hence $\Ch(\mu,g_n)\to 0$.
For any $\seqn$,
\begin{align*}
\liminf_\ninfty\Ch(\mu,|f-h_n|^q)
&\leq\limsup_\ninfty\Ch(\mu,|f-h_n|^q)\\
&\leq\sup_{k\geq n}\Ch(\mu,|f-h_k|^q)\\
&\leq\Ch(\mu,g_n),
\end{align*}
hence $\qn{f-h_n}=\Ch(\mu,|f-h_n|^q)^{1/q}\to 0$ since $\Ch(\mu,g_n)\to 0$.
The fact that $h_n\in\calS(X)\cap\calLq(\mu)$ follows since $|h_n|\leq |f|$ and $f\in\calLq(\mu)$.
It thus follows that $\calS(X)\cap\calLq(\mu)$ is dense in $\calLq(\mu)$.
If $\mu$ is additionally assumed to be null-additive, then the quotient spaces $S(X)$ and $L^q(\mu)$
and the prenorm $\qn{\cdot}$ are well-defined,
and it turns out that $S(X)\cap L^q(\mu)$ is dense in $L^q(\mu)$. 
\par
The conclusion for $\calLpq(\mu)$ and $L^{p,q}(\mu)$ thus follows
since $\mupq$ is conditionally order continuous and null-additive if and only if so is $\mu$.
\end{proof}
\begin{theorem}\label{L-sep}
Let $\mu\in\calM(X)$. Let $0<p<\infty$ and $0<q<\infty$.
Assume that $\mu$ is conditionally order continuous and relaxed subadditive.
Assume that $\mu$ has a countable basis of sets in $\calA$
with finite $\mu$-measure.
Then there is a countable subset $\calE$ of $\calLpq(\mu)$ such that
for any $f\in\calLpq(\mu)$ and $\ep>0$ there is a $\psi\in\calE$ such $\pqn{f-\psi}<\ep$.
Hence $\calLpq(\mu)$ is separable.
If $\mu$ is additionally assumed to be null-additive, then $L^{p,q}(\mu)$ is separable.
\end{theorem}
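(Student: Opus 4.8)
The plan is to reduce to the Choquet $\calLq$ space and then exhibit a countable dense subset built from rational combinations of characteristic functions of basis sets. First I would invoke the identity $\pqnm{f}{\mu}=(p/q)^{1/q}\qnm{f}{\mupq}$ and the equality $\calLpq(\mu)=\calLq(\mupq)$ recorded in Section~\ref{CLspace}. The measure $\mupq$ inherits conditional order continuity and relaxed subadditivity from $\mu$ (Subsection~\ref{measure}), and since $\mupq(A)<\infty$ exactly when $\mu(A)<\infty$ while $\mupq(A\triangle D)=\mu(A\triangle D)^{q/p}$, a countable basis of $\mu$ consisting of sets of finite $\mu$-measure is simultaneously a countable basis of $\mupq$ consisting of sets of finite $\mupq$-measure. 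Hence it suffices to prove the claim for $\calLq(\mu)$; the countable set obtained there will then serve for $\calLpq(\mu)$ without change, and the prenorm $\pqn{\cdot}$ is a quasi-seminorm by Proposition~\ref{pqnorm}, so $\calLpq(\mu)$ is a linear subspace of $\calF_0(X)$ and differences are meaningful.

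For $\calLq(\mu)$, fix a countable basis $\calD$ of $\mu$ with $\mu(D)<\infty$ for every $D\in\calD$ and set
\[
\calE:=\Bigl\{\,\sum_{k=1}^n r_k\chi_{D_k}\ :\ n\in\bbN,\ r_1,\dots,r_n\in\mathbb{Q},\ D_1,\dots,D_n\in\calD\,\Bigr\},
\]
which is countable; each $\psi\in\calE$ lies in $\calLq(\mu)$ because, by relaxed subadditivity, a finite union of sets of finite $\mu$-measure has finite $\mu$-measure, so $\psi$ is supported on such a set and $\Ch(\mu,|\psi|^q)<\infty$. Let $K_0\ge1$ be the relaxed-triangle-inequality constant for $\qn{\cdot}$ supplied by Proposition~\ref{pqnorm}. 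Given $f\in\calLq(\mu)$ and $\ep>0$, I would first use Theorem~\ref{L-dense} (which requires only conditional order continuity) to pick $h\in\calS(X)\cap\calLq(\mu)$ with $\qn{f-h}<\ep/(2K_0)$, and write $h=\sum_{k=1}^n c_k\chi_{B_k}$ with the $c_k\ne0$ real and the $B_k$ pairwise disjoint; since $h\in\calLq(\mu)$ one has $\mu(B_k)<\infty$ for every $k$, and, crucially, the number $n$ of summands is now frozen.

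Next, using the basis property together with $\qn{\chi_{B_k\triangle D_k}}=\mu(B_k\triangle D_k)^{1/q}$, $\qn{\chi_{D_k}}=\mu(D_k)^{1/q}<\infty$, and the homogeneity of $\qn{\cdot}$, I would choose $D_k\in\calD$ and $r_k\in\mathbb{Q}$ close enough to $B_k$ and $c_k$ that the bound obtained from the splitting $g_k:=c_k\chi_{B_k}-r_k\chi_{D_k}=c_k(\chi_{B_k}-\chi_{D_k})+(c_k-r_k)\chi_{D_k}$ and one application of the relaxed triangle inequality makes each $\qn{g_k}$ as small as desired. Iterating the relaxed triangle inequality over the fixed finitely many terms gives $\qn{h-\psi}\le C_n\sum_{k=1}^n\qn{g_k}$ for $\psi:=\sum_{k=1}^n r_k\chi_{D_k}\in\calE$, where $C_n$ depends only on $n$ and $K_0$; hence the $D_k,r_k$ can be taken fine enough that $\qn{h-\psi}<\ep/(2K_0)$, and then $\qn{f-\psi}\le K_0(\qn{f-h}+\qn{h-\psi})<\ep$. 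This establishes the displayed approximation property and the separability of $\calLq(\mu)$, hence of $\calLpq(\mu)$ by the reduction above. For the quotient space $L^{p,q}(\mu)$ when $\mu$ is null-additive, Proposition~\ref{pqnorm} gives $\pqn{[f]-[\psi]}=\pqn{f-\psi}$, so $\{[\psi]:\psi\in\calE\}$ is a countable dense subset of $L^{p,q}(\mu)$.

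The single delicate point — more bookkeeping than genuine obstacle — is the order of quantifiers: the number $n$ of pieces of $h$, and with it the constant $C_n$ coming from iterating the relaxed triangle inequality, depends on $\ep$, so $h$ must be fixed before the approximating basis sets $D_k$ and rationals $r_k$ are selected; once $h$ is fixed, $C_n$ is a harmless finite constant and the remaining estimates are entirely routine. One should also verify the small but essential fact that any $h\in\calS(X)\cap\calLq(\mu)$ is automatically supported on a set of finite $\mu$-measure (otherwise $\Ch(\mu,|h|^q)$ would be infinite), which is precisely what makes the termwise comparison of $c_k\chi_{B_k}$ with $r_k\chi_{D_k}$ legitimate.
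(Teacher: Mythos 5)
Your proposal is correct and follows essentially the same route as the paper's proof: reduce to the Choquet $\calLq$ space via $\calLpq(\mu)=\calLq(\mupq)$, approximate by simple functions using Theorem~\ref{L-dense}, then replace coefficients by rationals and level sets by basis sets, controlling everything with an iterated relaxed triangle inequality. The only cosmetic differences are that you merge the paper's two-stage approximation ($\xi\to\varphi\to\psi$) into a single per-term splitting and verify $\calE\subset\calLq(\mu)$ by a finite-support argument rather than by the iterated inequality.
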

\begin{proof}
We first prove the conclusion for $\calLq(\mu)$.
To this end, observe that there is a constant $L>2$ such that
\begin{equation}\label{eqn:L-sep1}
\qn{f_1+f_2+\dots+f_m}\leq L^m\sum_{k=1}^m\qn{f_k}
\end{equation}
for every $\seqm$ and $f_1,f_2,\dots,f_m\in\calF_0(X)$.
Indeed, if $\mu$ is $K$-relaxed subadditive for some $K\geq 1$, then by the relaxed subadditivity
of the Choquet integral stated in Subsection~\ref{integrals},
for any $f,g\in\calF_0^+(X)$ it follows that
\[
\Ch(\mu,f+g)\leq 2K\bigl\{\Ch(\mu,f)+\Ch(\mu,g)\bigr\}.
\]
Hence, repeated application of the inequality
\[
|a+b|^q\leq 2^q(|a|^q+|b|^q),
\]
which holds for every $a,b\in\bbR$, yields
\begin{equation}\label{eqn:L-sep2}
\qn{f+g}\leq L(\qn{f}+\qn{g}),
\end{equation}
where $L:=2^{1+\frac{2}{q}}K^{\frac{1}{q}}>2$.
Therefore, for any $\seqm$ and $f_1,f_2,\dots,f_m\in\calF_0(X)$,
we may apply~\eqref{eqn:L-sep2} $m$ times to obtain~\eqref{eqn:L-sep1}.
\par
Since $\mu$ has a countable basis of sets in $\calA$ with finite $\mu$-measure,
there is a countable collection $\calD$ of $\calA$-measurable sets with finite $\mu$-measure
such that for any $\ep>0$ and $A\in\calA$ with $\mu(A)<\infty$,
there is a $D\in\calD$ for which $\mu(A\triangle D)<\ep$.
Let $\calE$ be the set of all finite linear combinations of the characteristic functions of sets in $\calD$
with rational coefficients. Then $\calE$ is countable.
If $f\in\calE$, then $f$ can be expressed by
\[
f=\sum_{k=1}^n r_k\chi_{D_k},
\]
where $n\in\bbN$, $r_1,\dots,r_n$ are rational numbers, and $D_1,\dots,D_n\in\calD$.
It thus follows form~\eqref{eqn:L-sep1} that
\[
\qn{f}=\biggl\|\sum_{k=1}^n r_k\chi_{D_k}\biggr\|_q
\leq L^n\sum_{k=1}^n\qn{r_k\chi_{D_k}}
=L^n\sum_{k=1}^n|r_k|\mu(D_k)^{1/q}<\infty,
\]
which shows that $\calE$ is a subset of $\calLq(\mu)$.
\par
Let $f\in\calLq(\mu)$ and $\ep>0$.
Then by Theorem~\ref{L-dense} there is a $\xi\in\calS(X)\cap\calLq(\mu)$ such that
\begin{equation}\label{eqn:L-sep3}
\qn{f-\xi}<\frac{\ep}{3L^3}.
\end{equation}
Since $\xi$ is a simple function, it can be expressed by
\[
\xi=\sum_{k=1}^n c_k\chi_{A_k},
\]
where $\seqn$, $c_1,\dots,c_n\in\bbR$, $A_1,\dots,A_n\in\calA$,
$A_i\cap A_j=\eset\;\,(i\ne j)$, $c_k\ne 0$ for $k=1,2,\dots,n$. Since
\[
|\xi|^q=\biggl|\sum_{k=1}^n c_k\chi_{A_k}\biggr|^q
=\sum_{k=1}^n|c_k|^q\chi_{A_k}\geq |c_k|^q\chi_{A_k},
\]
we have
\[
\qn{\xi}=\Ch(\mu,|\xi|^q)^{1/q}\geq\Ch(\mu,|c_k|^q\chi_{A_k})^{1/q}
=|c_k|\mu(A_k)^{1/q},
\]
which yields $\mu(A_k)<\infty$ for $k=1,2,\dots,n$.
\par
Next take rational numbers $r_1,\dots,r_n$ such that
\begin{equation}\label{eqn:L-sep4}
\max_{1\leq k\leq n}|c_k-r_k|<\frac{\ep}{3L^{n+3}\left(1+\sum_{k=1}^n\mu(A_k)^{1/q}\right)}
\end{equation}
and let
\[
\varphi:=\sum_{k=1}^n r_k\chi_{A_k}.
\]
Since
\[
|\xi-\varphi|=\sum_{k=1}^n|c_k-r_k|\chi_{A_k},
\]
 it follows from~\eqref{eqn:L-sep1} that
\begin{align}\label{eqn:L-sep5}
\qn{\xi-\varphi}
&\leq L^n\sum_{k=1}^n\qn{|c_k-r_k|\chi_{A_k}}\nonumber\\
&=L^n\sum_{k=1}^n|c_k-r_k|\mu(A_k)^{1/q}\nonumber\\
&<L^n\sum_{k=1}^n\mu(A_k)^{1/q}\frac{\ep}{3L^{n+3}
\left(1+\sum_{k=1}^n\mu(A_k)^{1/q}\right)}\nonumber\\
&<\frac{\ep}{3L^3}.
\end{align}
\par
Finally, since $\mu(A_k)<\infty$ for $k=1,2,\dots,n$, take $D_1,\dots,D_n\in\calD$ such that
\begin{equation}\label{eqn:L-sep6}
\max_{1\leq k\leq n}
\mu(A_k\triangle D_k)<\left\{\frac{\ep}{3L^{n+3}(1+\sum_{k=1}^n|r_k|)}\right\}^q
\end{equation}
and let
\[
\psi:=\sum_{k=1}^n r_k\chi_{D_k}.
\]
In the same way as calculating~\eqref{eqn:L-sep5} we obtain
\begin{equation}\label{eqn:L-sep7}
\qn{\varphi-\psi}<\frac{\ep}{3L^3}.
\end{equation}
It thus follows from~\eqref{eqn:L-sep1}, \eqref{eqn:L-sep3},
\eqref{eqn:L-sep5}, and~\eqref{eqn:L-sep7}
that $\qn{f-\psi}<\ep$.
Hence $\calLq(\mu)$ is separable. The separability of $L^q(\mu)$ is now obvious.
\par
The conclusion for $\calLpq(\mu)$ and $L^{p,q}(\mu)$ thus follows since $\mupq$
is conditionally order continuous,
relaxed subadditive, and null-additive if and only if so is $\mu$.
\end{proof}
\section{The Lorentz space of weak type}
In this section, the Lorentz space of weak type is defined by using the Shilkret integral,
and its completeness and dense subsets are considered.
\begin{definition}\label{wL}
Let $\mu\in\calM(X)$. Define the function $\pinftyn{\cdot}\colon\calF_0(X)\to [0,\infty]$ by
\[
\pinftyn{f}:=\Sh(\mu^{1/p},|f|)
\]
for every $f\in\calF_0(X)$ and let
\[
\calL^{p,\infty}(\mu):=\{f\in\calF_0(X)\colon\pinftyn{f}<\infty\}.
\]
If the measure $\mu$ is needed to specify, then $\pinftyn{f}$ is written as $\pinftynm{f}{\mu}$.
Then the space $\calL^{p,\infty}(\mu)$ is called the \textit{Lorentz space of weak type}\/ and
the prenorm $\pinftyn{\cdot}$
is called the \textit{Lorentz prenorm of weak type}\/ on $\calL^{p,\infty}(\mu)$.
\end{definition}
\par
If the prenorm $\pinftyn{\cdot}$ on $\calL^{p,\infty}(\mu)$ is a quasi-seminorm,
then $\calL^{p,\infty}(\mu)$ is a real linear subspace of $\calF_0(X)$,
but this is not the case in general. 
When $\mu$ is $\sigma$-additive, the space $\calL^{p,\infty}(\mu)$
is nothing but the ordinary Lorentz space of weak type (this is also referred
to as the weak $\calL^p$ space);
see~\cite[Theorem~6.6]{C-R}. 
The prenorm $\pinftyn{\cdot}$ does not satisfy the triangle inequality
in general even if $\mu$ is $\sigma$-additive.
\begin{proposition}\label{P-wL}
Let $\mu\in\calM(X)$. Let $0<p<\infty$ and $0<q<\infty$.
\begin{enumerate}
\item[\us{(1)}] For any $A\in\calA$ it follows that $\pinftyn{\chi_A}=\mu(A)^{1/p}$.
\item[\us{(2)}] For any $f\in\calL^{p,\infty}(\mu)$
it follows that $\pinftyn{f}=0$ if and only if $\mu(\{|f|>c\})=0$ for every $c>0$;
they are equivalent to $\mu(\{|f|>0\})=0$ if $\mu$ null-continuous.
\item[\us{(3)}] For any $f\in\calL^{p,\infty}(\mu)$
and $c\in\bbR$ it follows that $\pinftyn{cf}=|c|\pinftyn{f}$.
Hence $\pinftyn{\cdot}$ is homogeneous. 
\item[\us{(4)}] For any $f\in\calL^{p,\infty}(\mu)$
and $c>0$ it follows that $\mu(\{|f|\geq c\})\leq\pinftyn{f}^p/c^p$.
\item[\us{(5)}] For any $f\in\calL^{p,\infty}(\mu)$ it follows that $\pinftyn{f}\leq\pqn{f}$.
\item[\us{(6)}] For any $f,g\in\calL^{p,\infty}(\mu)$,
if $|f|\leq |g|$ then $\pinftyn{f}\leq\pinftyn{g}$.
\item[\us{(7)}] $\mu$ is weakly null-additive if and only if $\pinftyn{\cdot}$ is weakly null-additive.
\item[\us{(8)}] $\mu$ is null-additive if and only if $\pinftyn{\cdot}$ is null-additive.
\item[\us{(9)}] If $\mu$ is $K$-relaxed subadditive for some $K\geq 1$,
then $\pinftyn{\cdot}$ satisfies
the $2^{1+\frac{1}{p}}K^{\frac{1}{p}}$-relaxed triangle inequality.
\item[\us{(10)}] $\mu$ is null-additive if and only if it follows that $\pinftyn{f}=\pinftyn{g}$
whenever $f,g\in\calL^{p,\infty}(\mu)$ and $f\sim g$.
\end{enumerate}
\end{proposition}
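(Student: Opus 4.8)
The plan is to derive all ten assertions as transcriptions to the prenorm $\pinftyn{\cdot}$ of the elementary properties of the Shilkret integral recorded in Subsection~\ref{integrals} (generativity, monotonicity, positive homogeneousness, relaxed subadditivity) together with the sublevel-set inclusions for $|f+g|$, in the same spirit as the treatment of $\dsn{\cdot}$ and $\pqn{\cdot}$ in Propositions~\ref{PDS} and~\ref{pqnorm}. Three assertions are immediate. For (1), generativity of $\Sh$ gives $\pinftyn{\chi_A}=\Sh(\mu^{1/p},\chi_A)=\mu^{1/p}(A)=\mu(A)^{1/p}$. For (3), positive homogeneousness gives $\pinftyn{cf}=\Sh(\mu^{1/p},|c|\,|f|)=|c|\,\Sh(\mu^{1/p},|f|)=|c|\,\pinftyn{f}$, together with $\pinftyn{0}=0$. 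For (6), monotonicity of $\Sh$ applied to $|f|\leq|g|$ gives $\pinftyn{f}\leq\pinftyn{g}$.

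Next I would treat the assertions that only involve the distribution function, using $\pinftyn{f}=\sup_{t>0}t\,\mu(\{|f|>t\})^{1/p}$. For (2), this supremum vanishes exactly when $\mu(\{|f|>t\})=0$ for every $t>0$; and if $\mu$ is null-continuous, then $\{|f|>0\}=\bigcup_{n}\{|f|>1/n\}$ is a nondecreasing union of $\mu$-null sets, so $\mu(\{|f|>0\})=0$, while the inclusion $\{|f|>c\}\subset\{|f|>0\}$ gives the converse. For (4), using the ``$\geq$'' form of the distribution function permitted in Subsection~\ref{integrals} one has $c\,\mu(\{|f|\geq c\})^{1/p}=c\,\mu^{1/p}(\{|f|\geq c\})\leq\Sh(\mu^{1/p},|f|)=\pinftyn{f}$, and raising this to the $p$-th power yields the stated estimate. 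Assertion (5) is a consequence of the Chebyshev-type inequality in (4) of Proposition~\ref{pqnorm}: raising that inequality to the power $1/p$ and passing to the supremum over $c>0$ in $\pinftyn{f}=\sup_{c>0}c\,\mu(\{|f|>c\})^{1/p}$ gives $\pinftyn{f}\leq\pqn{f}$.

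The remaining four assertions link $\pinftyn{\cdot}$ to characteristics of $\mu$ and follow the pattern established for $\dsn{\cdot}$ and $\pqn{\cdot}$ in Propositions~\ref{PDS} and~\ref{pqnorm}. For the ``only if'' direction of (7) I would use $\{|f+g|>c\}\subset\{|f|>c/2\}\cup\{|g|>c/2\}$ together with the weak null-additivity of $\mu$ and (2); for (8), the finer inclusions $\{|f+g|>c\}\subset\{|f|>c-\ep\}\cup\{|g|>\ep\}$ and $\{|f|>c\}\subset\{|f+g|>c-\ep\}\cup\{|g|>\ep\}$ for $0<\ep<c$, together with the null-additivity of $\mu$ and (2), followed by letting $\ep\downarrow0$ and taking the supremum over $c$; assertion (10) then follows from (8), since $f\sim g$ gives $\pinftyn{f-g}=0$ by (2). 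For the ``if'' directions I would test on characteristic functions: applying $\pinftyn{\cdot}$ to $\chi_A$, $\chi_B$ and to $\chi_{A\cup B}\leq\chi_A+\chi_B$ and invoking (1) and (6) yields (7) and (8); for (10), $\chi_{A\cup B}-\chi_A=\chi_{B\setminus A}$ is $\mu$-null when $\mu(B)=0$, so $\chi_{A\cup B}\sim\chi_A$, whence $\mu(A\cup B)^{1/p}=\pinftyn{\chi_{A\cup B}}=\pinftyn{\chi_A}=\mu(A)^{1/p}$. For (9), starting from $\{|f+g|>c\}\subset\{|f|>c/2\}\cup\{|g|>c/2\}$, the $K$-relaxed subadditivity of $\mu$ and then the elementary inequality $(a+b)^{r}\leq2^{r}(a^{r}+b^{r})$ with $r=1/p$ give $c\,\mu(\{|f+g|>c\})^{1/p}\leq2^{1+\frac1p}K^{\frac1p}\bigl(\tfrac c2\mu(\{|f|>c/2\})^{1/p}+\tfrac c2\mu(\{|g|>c/2\})^{1/p}\bigr)\leq2^{1+\frac1p}K^{\frac1p}\bigl(\pinftyn{f}+\pinftyn{g}\bigr)$, and a supremum over $c>0$ finishes it.

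I do not expect a genuine obstacle here; the points needing care are the $\ep$-perturbation of level sets in the ``only if'' parts of (8) and (10) — the level set $\{|f+g|>c\}$ is not contained in the level sets of $f$ and $g$ at the same height $c$, so one writes $c=(c-\ep)+\ep$, bounds $c\,\mu(\cdot)^{1/p}$ by $\frac{c}{c-\ep}\,(c-\ep)\,\mu(\cdot)^{1/p}$, and lets $\ep\downarrow0$ — the bookkeeping of the constant $2^{1+\frac1p}K^{\frac1p}$ in (9), and the routine handling of sets of infinite $\mu$-measure in the ``if'' directions, exactly as done in Propositions~\ref{PDS} and~\ref{pqnorm}.
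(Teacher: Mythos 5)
Your treatment of (1)--(4), (6), (7), (8), and (10) is correct and is exactly what the paper leaves to the reader: (1), (3), (6) from generativity, positive homogeneity, and monotonicity of the Shilkret integral; (2) and (4) from the distribution-function form of $\pinftyn{\cdot}$; and (7), (8), (10) by the same level-set inclusions and characteristic-function tests used for $\dsn{\cdot}$ and $\pqn{\cdot}$ (for the ``if'' part of (8) you should test on $\chi_A$ and $\chi_{B\setminus A}$ rather than $\chi_B$, so that the sum is exactly $\chi_{A\cup B}$, as in (iv)$\Rightarrow$(i) of Proposition~\ref{Linftyn}(6) --- a cosmetic point). Your proof of (9), the only item the paper proves in detail, reproduces its computation verbatim, with $\{|f|>t/2\}$ in place of $\{2|f|>t\}$.

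The one genuine problem is (5). From (4) of Proposition~\ref{pqnorm} you get $c\,\mu(\{|f|>c\})^{1/p}\leq(q/p)^{1/q}\pqn{f}$ for every $c>0$, and taking the supremum over $c$ yields $\pinftyn{f}\leq(q/p)^{1/q}\pqn{f}$: the factor $(q/p)^{1/q}$ does not cancel, and your final clause ``gives $\pinftyn{f}\leq\pqn{f}$'' silently discards it. When $q\leq p$ this factor is at most $1$ and the claimed inequality does follow, but when $q>p$ it exceeds $1$ and your argument proves something weaker than (5). Moreover the constant is sharp, so the gap cannot be repaired: by (1) of the present proposition $\pinftyn{\chi_A}=\mu(A)^{1/p}$, while by (1) of Proposition~\ref{pqnorm} $\pqn{\chi_A}=(p/q)^{1/q}\mu(A)^{1/p}$, so $\pinftyn{\chi_A}=(q/p)^{1/q}\pqn{\chi_A}>\pqn{\chi_A}$ whenever $q>p$ and $0<\mu(A)<\infty$. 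Hence, with the paper's normalization of the Choquet--Lorentz prenorm, assertion (5) as stated holds only for $q\leq p$, and the correct general inequality is $\pinftyn{f}\leq(q/p)^{1/q}\pqn{f}$. This is a defect of the statement rather than of your method, but your write-up should record the constant instead of asserting that the supremum step delivers (5) as written.
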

\begin{proof}
Assertions (1)--(6) are easy to prove
and assertions (7) and (8) can be derived in the same manner as~\cite[Proposition~3.2]{Kawabe2021}.
\par
(9)\ Let $f,g\in\calL^{p,\infty}(\mu)$. For any $t>0$, we have
\[
\{|f+g|>t\}\subset\{2|f|>t\}\cup\{2|g|>t\},
\]
hence
\begin{equation}\label{eqn:P-wL1}
\mu(\{|f+g|>t\})\leq K\bigl\{\mu(\{2|f|>t\})+\mu(\{2|g|>t\})\bigr\}
\end{equation}
by the $K$-relaxed subadditivity of $\mu$. Then we have
\begin{align*}
\pinftyn{f+g}
&=\sup_{t\in [0,\infty)}t\mu(\{|f+g|>t\})^{1/p}\\
&\leq\sup_{t\in [0,\infty)}t\Big[K\bigl\{\mu(\{2|f|>t\})
+\mu(\{2|g|>t\bigr\})\bigr\}\Bigr]^{1/p}\\[1mm]
&\leq 2^\frac{1}{p}K^\frac{1}{p}\sup_{t\in [0,\infty)}t\bigl\{\mu(\{2|f|>t\})^{1/p}
+\mu(\{2|g|>t\})^{1/p}\bigr\}\\
&\leq 2^\frac{1}{p}K^\frac{1}{p}\biggl(\sup_{t\in [0,\infty)}t\mu(\{2|f|>t\})^{1/p}
+\sup_{t\in [0,\infty)}t\mu(\{2|g|>t\})^{1/p}\biggr)\\[1mm]
&=2^\frac{1}{p}K^\frac{1}{p}\Bigl(\Sh(\mu^{1/p},2|f|)+\Sh(\mu^{1/p},2|g|)\Bigr)\\[1mm]
&=2^{1+\frac{1}{p}}K^\frac{1}{p}\bigl(\pinftyn{f}+\pinftyn{g}\bigr),
\end{align*}
where the first inequality is due to~\eqref{eqn:P-wL1} and the second is due to~\eqref{elementary}.
\par
(10)\ It can be proved in the same manner as (6) of Proposition~\ref{PDS}.
\end{proof}
The quotient space
\[
L^{p,\infty}(\mu):=\{[f]\colon f\in\calL^{p,\infty}(\mu)\}
\]
is defined by the equivalence relation stated in Subsection~\ref{equiv}.
Given an equivalence class $[f]\in L^{p,\infty}(\mu)$,
define the prenorm on $L^{p,\infty}(\mu)$ by $\pinftyn{[f]}:=\pinftyn{f}$,
which is well-defined if $\mu$ is null-additive by (10) of Proposition~\ref{P-wL}.  
This prenorm has the same properties as the prenorm on $\calL^{p,\infty}(\mu)$
and separates points of $L^{p,\infty}(\mu)$, that is, for any $[f]\in L^{p,\infty}(\mu)$,
if $\pinftyn{[f]}=0$ then $[f]=0$.
\par
The following convergence theorems of the Shilkret integral are needed to show the completeness
of $\calL^{p,\infty}(\mu)$ and $L^{p,\infty}(\mu)$ and of interest in themselves.
\begin{proposition}\label{Sh-conv}
Let $\mu\in\calM(X)$. The following assertions are equivalent.
\begin{enumerate}
\item[\us{(i)}] $\mu$ is monotone autocontinuous from below.
\item[\us{(ii)}] The Shilkret monotone nondecreasing almost uniform convergence theorem holds
for $\mu$, that is, for any $\{f_n\}_\seqn\subset\calF_0^+(X)$ and $f\in\calF_0^+(X)$, 
if they satisfy
\begin{enumerate}
\item[\us{(a)}] $f_n(x)\leq f_{n+1}(x)\leq f(x)$ for every $x\in X$ and $\seqn$,
\item[\us{(b)}] $f_n\to f$ $\mu$-a.u.,
\end{enumerate}
then it follows that $\Sh(\mu,f_n)\uparrow\Sh(\mu,f)$.
\item[\us{(iii)}] The Shilkret Fatou almost uniform convergence lemma holds for $\mu$, that is,
for any $\{f_n\}_\seqn\subset\calF_0^+(X)$ and $f\in\calF_0^+(X)$, if $f_n\to f$ $\mu$-a.u.,
then it follows that
\[
\Sh(\mu,f)\leq\liminf_\ninfty\Sh(\mu,f_n).
\]
\end{enumerate}
\end{proposition}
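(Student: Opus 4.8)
The plan is to establish the cyclic chain of implications (i)$\Rightarrow$(ii)$\Rightarrow$(iii)$\Rightarrow$(i), in close analogy with the treatment of the Choquet integral in Propositions~\ref{Ch-conv} and~\ref{C-Fatou}. Throughout, for $t\geq 0$ and $\seqn$ I will write $\varphi_n(t):=\mu(\{f_n>t\})$ and $\varphi(t):=\mu(\{f>t\})$, so that $\Sh(\mu,f_n)=\sup_{t\geq 0}t\varphi_n(t)$ and $\Sh(\mu,f)=\sup_{t\geq 0}t\varphi(t)$, and I will use that $\mu$-almost uniform convergence implies convergence in $\mu$-measure.

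For (i)$\Rightarrow$(ii): monotonicity of the Shilkret integral together with condition~(a) already shows that $\{\Sh(\mu,f_n)\}_\seqn$ is nondecreasing and bounded above by $\Sh(\mu,f)$, so the substance is the reverse estimate $\Sh(\mu,f)\leq\sup_\seqn\Sh(\mu,f_n)$. I would fix $t_0>0$ and $\ep\in(0,t_0)$, and put $A:=\{f>t_0\}$ and $B_n:=\{f-f_n>\ep\}$ for every $\seqn$; note that $|f_n-f|=f-f_n$ by~(a). Since $f_n\leq f_{n+1}$ pointwise, the sequence $\{B_n\}_\seqn$ is nonincreasing, and condition~(b) forces $f_n\mconv f$, hence $\mu(B_n)\to 0$. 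Then $A\setminus B_n$ is nondecreasing and, directly from the definition of $B_n$, $A\setminus B_n\subset\{f_n>t_0-\ep\}$, so monotone autocontinuity from below gives $\mu(A\setminus B_n)\uparrow\mu(A)$ and hence $\varphi(t_0)=\mu(A)\leq\sup_\seqn\varphi_n(t_0-\ep)$. Multiplying by $t_0-\ep$ yields $(t_0-\ep)\varphi(t_0)\leq\sup_\seqn(t_0-\ep)\varphi_n(t_0-\ep)\leq\sup_\seqn\Sh(\mu,f_n)$; letting $\ep\downarrow 0$ and then taking the supremum over $t_0>0$ gives $\Sh(\mu,f)\leq\sup_\seqn\Sh(\mu,f_n)$, which together with the opposite inequality and monotonicity delivers $\Sh(\mu,f_n)\uparrow\Sh(\mu,f)$.

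For (ii)$\Rightarrow$(iii): given $f_n\to f$ $\mu$-a.u., I would set $g_n:=f\wedge\inf_{k\geq n}f_k$. Then $\{g_n\}_\seqn$ is nondecreasing, $g_n\leq f$ for every $x\in X$, and $g_n\leq f_n$; moreover $g_n\to f$ $\mu$-a.u., since on any set on which $\{f_n\}_\seqn$ converges uniformly one has $|g_n-f|\leq\sup_{k\geq n}|f_k-f|\to 0$. (The wedge with $f$ is needed because condition~(a) of~(ii) must hold everywhere, not merely $\mu$-a.e.) Applying (ii) to $\{g_n\}_\seqn$ and using monotonicity of the Shilkret integral gives $\Sh(\mu,f)=\lim_\ninfty\Sh(\mu,g_n)\leq\liminf_\ninfty\Sh(\mu,f_n)$. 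For (iii)$\Rightarrow$(i): I would take $A,B_n\in\calA$ with $\{B_n\}_\seqn$ nonincreasing and $\mu(B_n)\to 0$, and set $f_n:=\chi_{A\setminus B_n}$ and $f:=\chi_A$; these two functions agree off $B_n$ and $\{B_n\}_\seqn$ is nonincreasing, so $f_n\to f$ $\mu$-a.u. By (iii) and the generativity $\Sh(\mu,\chi_E)=\mu(E)$ we get $\mu(A)\leq\liminf_\ninfty\mu(A\setminus B_n)$, while monotonicity gives $\limsup_\ninfty\mu(A\setminus B_n)\leq\mu(A)$, whence $\mu(A\setminus B_n)\to\mu(A)$; as $A$ and $\{B_n\}_\seqn$ were arbitrary, $\mu$ is monotone autocontinuous from below. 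I expect the main obstacle to be the distribution-function estimate in (i)$\Rightarrow$(ii): the delicate points are that $\{B_n\}_\seqn$ is nonincreasing (which genuinely uses the pointwise monotonicity in~(a)), that $A\setminus B_n$ is nondecreasing so that monotone autocontinuity yields \emph{monotone} convergence, and that the three limiting operations $\sup_\seqn$, then $\ep\downarrow 0$, then $\sup_{t_0>0}$, are carried out in that order; the remaining two implications are routine adaptations of the Choquet arguments.
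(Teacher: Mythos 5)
Your proof is correct, but the key implication (i)$\Rightarrow$(ii) follows a genuinely different route from the paper's. The paper first reduces to a finite measure by replacing $\mu$ with $\mu\land s$, truncates $f$ and $f_n$ at a level $r$, approximates the truncations by simple functions $\bigvee_{i=1}^k c_i\chi_{\{g>c_i\}}$ built from continuity points $c_i$ of the distribution function $t\mapsto\mu(\{f\land r>t\})$, invokes the distributional convergence $\mu(\{f_n\land r>t\})\uparrow\mu(\{f\land r>t\})$ at such points (assertion (ii) of Proposition~\ref{Ch-conv}), controls the approximation error by $2r\mu(X)/k$, and finally removes both truncations by the upper marginal continuity of the Shilkret integral. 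Your $\ep$-shift estimate $\mu(\{f>t_0\})\le\sup_\seqn\mu(\{f_n>t_0-\ep\})$, obtained by applying monotone autocontinuity from below directly to $A=\{f>t_0\}$ and the nonincreasing sets $B_n=\{f-f_n>\ep\}$, reaches the same conclusion in one step, with no finiteness reduction, no simple-function approximation, and no appeal to continuity points; it exploits the fact that the Shilkret functional $\sup_{t}t\mu(\{f>t\})$ is insensitive to an infinitesimal shift of the level $t$, and all the limiting operations are taken in a legitimate order. In (ii)$\Rightarrow$(iii) your choice $g_n:=f\wedge\inf_{k\ge n}f_k$ is also slightly more careful than the paper's $g_n:=\inf_{k\ge n}f_k$: the wedge guarantees $g_n\le f$ everywhere, as condition (a) demands, whereas $\inf_{k\ge n}f_k$ may exceed $f$ on the exceptional set where the almost uniform convergence fails. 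The implication (iii)$\Rightarrow$(i) is identical to the paper's. Both approaches are sound; yours is shorter and more elementary, while the paper's has the advantage of reusing the Choquet machinery of Proposition~\ref{Ch-conv} verbatim.
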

\begin{proof}
(i)$\Rightarrow$(ii)\ We first show the conclusion in the case where $\mu$ is finite.
For each $r>0$, let $g:=f\land r$ and $g_n:=f_n\land r$ for every $\seqn$.
Fix $\seqk$ and take the continuity points $c_1,c_2,\dots,c_k$ of the function $\mu(\{g>t\})$
such that
\begin{itemize}
\item $0=c_0<c_1<c_2<\dots<c_{k-1}<c_k<r$,
\item $|c_i-c_{i-1}|<2r/k\;\,(i=1,2,\dots,k-1)$ and $|r-c_k|<r/k$.
\end{itemize}
For each $\seqn$, let
\[
h_{n,k}:=\bigvee_{i=1}^k c_i\chi_{\{g_n>c_i\}}\quad\mbox{and}\quad 
h_k:=\bigvee_{i=1}^k c_i\chi_{\{g>c_i\}}.
\]
Then $0\leq h_{n,k}(x)\leq r$, $0\leq h_k(x)\leq r$,
$|h_{n,k}(x)-g_n(x)|<2r/k$, and $|h_k(x)-g(x)|<2r/k$ for every $x\in X$ and $\seqn$.
\par
Now, since $\{g_n\}_\seqn$ and $g$ satisfy conditions (a) and (b) of assertion (ii) of Proposition~\ref{Ch-conv},
it follows that
\[
\mu(\{g_n>c_i\})\uparrow\mu(\{g>c_i\})
\]
for every $i\in\{1,2,\dots,k\}$. Hence
\begin{equation}\label{eqn:Sh-conv1}
\Sh(\mu,h_{n,k})=\bigvee_{i=1}^k c_i\mu(\{g_n>c_i\})
\to\bigvee_{i=1}^k c_i\mu(\{g>c_i\})=\Sh(\mu,h_k).
\end{equation}
Since
\[
|\Sh(\mu,g)-\Sh(\mu,h_k)|\leq\frac{2r}{k}\mu(X)\mbox{ and }
|\Sh(\mu,g_n)-\Sh(\mu,h_{n,k})|\leq\frac{2r}{k}\mu(X)
\]
for every $\seqn$, by~\eqref{eqn:Sh-conv1} we have
\[
\limsup_\ninfty|\Sh(\mu,g_n)-\Sh(\mu,g)|\leq\frac{4r}{k}\mu(X).
\]
Since $\seqk$ is arbitrarily fixed, letting $\kinfty$ yields
\[
\Sh(\mu,g)=\lim_\ninfty\Sh(\mu,g_n)=\sup_\seqn\Sh(\mu,g_n).
\]
It thus follows from the upper marginal continuity
of the Shilkret integral stated in Subsection~\ref{integrals} that
\begin{align*}
\Sh(\mu,f)
&=\sup_{r>0}\Sh(\mu,f\land r)=\sup_{r>0}\sup_\seqn\Sh(\mu,f_n\land r)\\
&=\sup_\seqn\sup_{r>0}\Sh(\mu,f_n\land r)=\sup_\seqn\Sh(\mu,f_n).
\end{align*}
\par
We turn to the general case.
For any $s>0$, since the nonadditive measure $\mu\land s$, which is defined by
$(\mu\land s)(A):=\mu(A)\land s$ for every $A\in\calA$, is finite and monotone
autocontinuous from below, it follows from what has been shown above that
\[
\sup_\seqn\Sh(\mu\land s,f_n)=\Sh(\mu\land s,f),
\]
hence
\begin{align*}
\Sh(\mu,f)
&=\sup_{s>0}\sup_\seqn\Sh(\mu\land s,f_n)
=\sup_\seqn\sup_{s>0}\Sh(\mu\land s,f_n)=\sup_\seqn\Sh(\mu,f_n).
\end{align*}
\par
(ii)$\Rightarrow$(iii)\ For each $\seqn$, let $g_n:=\inf_{k\geq n}f_k$.
Then, $\{g_n\}_\seqn$ and $f$ satisfy conditions (a) and (b) of assertion (ii).
It thus follows that
\[
\Sh(\mu,f)
=\lim_\ninfty\Sh(\mu,g_n)\leq\lim_\ninfty
\inf_{k\geq n}\Sh(\mu,f_k)=\liminf_\ninfty\Sh(\mu,f_n).
\]
\par
(iii)$\Rightarrow$(i)\ Take $A,B_n\in\calA$ and assume that $\{B_n\}_\seqn$
is nonincreasing and $\mu(B_n)\to 0$. For each $\seqn$,
let $f_n:=\chi_{A\setminus B_n}$ and $f:=\chi_A$.
Then $f_n\to f$ $\mu$-a.u. Hence, assertion (iii) yields
\begin{align*}
\mu(A)
&=\Sh(\mu,f)\leq\liminf_\ninfty\Sh(\mu,f_n)\\
&=\liminf_\ninfty\mu(A\setminus B_n)\leq\limsup_\ninfty\mu(A\setminus B_n)\leq\mu(A).
\end{align*}
Therefore $\mu$ is monotone autocontinuous from below.
\end{proof}
\par
The proof of the following two results
is left to the reader since the same proof as Theorem~\ref{L-comp1}
and Corollary~\ref{L-comp2} works by using Proposition~\ref{Sh-conv} instead of Proposition~\ref{C-Fatou}.
\begin{theorem}\label{Sh-comp1}
Let $\mu\in\calM(X)$. Let $0<p<\infty$.
Assume that $\mu$ is monotone autocontinuous from below and has property~(C) and the (p.g.p.).
Then $\calL^{p,\infty}(\mu)$ and $L^{p,\infty}(\mu)$ are quasi-complete.
\end{theorem}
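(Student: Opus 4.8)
The plan is to carry out the proof of Theorem~\ref{L-comp1} essentially verbatim, with the Choquet integral replaced by the Shilkret integral and Proposition~\ref{C-Fatou} replaced by Proposition~\ref{Sh-conv}. First I would take a bounded Cauchy sequence $\{f_n\}_\seqn\subset\calL^{p,\infty}(\mu)$. By (4) of Proposition~\ref{P-wL} we have $\mu(\{|f_m-f_n|\geq c\})\leq\pinftyn{f_m-f_n}^p/c^p$ for every $c>0$, so $\{f_n\}_\seqn$ is Cauchy in $\mu$-measure; since $\mu$ has property~(C) and the (p.g.p.), Theorem~\ref{comp} then provides a subsequence $\{f_{n_k}\}_\seqk$ and a function $f\in\calF_0(X)$ with $f_{n_k}\to f$ $\mu$-a.u. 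I would record two trivialities: $\mu$-a.u.\ convergence and $\mu^{1/p}$-a.u.\ convergence are one and the same notion (reparametrise the defining $\ep$), and uniform convergence off a set is preserved by $g\mapsto|g-f_n|$, so for each fixed $n$ one has $|f_{n_k}-f_n|\to|f-f_n|$ $\mu^{1/p}$-a.u.

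Next I would fix $\ep>0$ and choose $n_0\in\bbN$ with $\Sh(\mu^{1/p},|f_m-f_n|)<\ep$ whenever $m,n\geq n_0$. Since $\mu$ is monotone autocontinuous from below, so is $\mu^{1/p}$, hence the Shilkret Fatou almost uniform convergence lemma (assertion~(iii) of Proposition~\ref{Sh-conv}, applied to $\mu^{1/p}$) gives, for each fixed $n\geq n_0$,
\[
\Sh(\mu^{1/p},|f-f_n|)\leq\liminf_\kinfty\Sh(\mu^{1/p},|f_{n_k}-f_n|)\leq\sup_{k\geq l}\Sh(\mu^{1/p},|f_{n_k}-f_n|)
\]
for every $\seql$. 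As $n_k\to\infty$ there is a $k_0$ with $n_k\geq n_0$ for all $k\geq k_0$, so taking $l=k_0$ bounds the right-hand side by $\ep$; hence $\pinftyn{f-f_n}\to 0$. Applying the same lemma to $|f_{n_k}|\to|f|$ $\mu^{1/p}$-a.u.\ and using the boundedness of $\{f_n\}_\seqn$ yields $\pinftyn{f}=\Sh(\mu^{1/p},|f|)\leq\liminf_\kinfty\Sh(\mu^{1/p},|f_{n_k}|)\leq\sup_\seqn\pinftyn{f_n}<\infty$, so $f\in\calL^{p,\infty}(\mu)$. Thus $\calL^{p,\infty}(\mu)$ is quasi-complete.

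Since monotone autocontinuity from below implies null-additivity, the quotient prenorm $\pinftyn{[f]}:=\pinftyn{f}$ on $L^{p,\infty}(\mu)$ is well-defined by (10) of Proposition~\ref{P-wL}, and passing to representatives transfers the conclusion, so $L^{p,\infty}(\mu)$ is quasi-complete as well. I do not expect a genuine obstacle: the only point that differs from the Choquet case is that the Fatou-type lemma must be invoked for $\mu^{1/p}$ rather than for $\mu$ (because $\pinftyn{\cdot}$ carries $\mu^{1/p}$ built in), which is harmless since monotone autocontinuity from below, property~(C), and the (p.g.p.)\ all persist under $\mu\mapsto\mu^{r}$ for $r>0$, as noted in Subsection~\ref{measure} and in the discussion following Definition~\ref{C0}. (Adding relaxed subadditivity, which makes $\pinftyn{\cdot}$ a quasi-seminorm so that Cauchy sequences are automatically bounded, would upgrade ``quasi-complete'' to ``complete'' in the manner of Corollary~\ref{L-comp2}.)
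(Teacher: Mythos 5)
Your proposal is correct and follows exactly the route the paper intends: the paper explicitly leaves this proof to the reader, stating that the argument of Theorem~\ref{L-comp1} goes through with Proposition~\ref{Sh-conv} in place of Proposition~\ref{C-Fatou}, which is precisely what you carry out. Your two added observations (that $\mu$-a.u.\ and $\mu^{1/p}$-a.u.\ convergence coincide, and that the Fatou-type lemma must be applied to $\mu^{1/p}$, whose monotone autocontinuity from below is inherited from $\mu$) are exactly the details needed to make the transfer rigorous.
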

\begin{corollary}\label{Sh-comp2}
Let $\mu\in\calM(X)$. Let $0<p<\infty$.
Assume that $\mu$ is relaxed subadditive, monotone autocontinuous from below,
and has property~(C).
Then $\calL^{p,\infty}(\mu)$ is complete with respect to the quasi-seminorm $\pinftyn{\cdot}$
and $L^{p,\infty}(\mu)$ is complete with respect to the quasi-norm $\pinftyn{\cdot}$.
\end{corollary}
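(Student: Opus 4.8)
The plan is to deduce the statement from Theorem~\ref{Sh-comp1} (quasi-completeness) by upgrading ``quasi-complete'' to ``complete'', exactly as Corollary~\ref{L-comp2} was obtained from Theorem~\ref{L-comp1}. First I would check that the hypotheses of Theorem~\ref{Sh-comp1} are in force: relaxed subadditivity implies the (p.g.p.) and monotone autocontinuity from below implies null-additivity (both recorded in Subsection~\ref{measure}), while property~(C) is assumed outright. Hence Theorem~\ref{Sh-comp1} applies and $\calL^{p,\infty}(\mu)$ and $L^{p,\infty}(\mu)$ are quasi-complete, i.e.\ every \emph{bounded} Cauchy sequence converges to an element of the space.

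Next I would remove the boundedness restriction. By (3) and (9) of Proposition~\ref{P-wL} the prenorm $\pinftyn{\cdot}$ is homogeneous and satisfies a $2^{1+1/p}K^{1/p}$-relaxed triangle inequality, where $K$ is the relaxed-subadditivity constant of $\mu$; thus $\pinftyn{\cdot}$ is a quasi-seminorm on $\calL^{p,\infty}(\mu)$, and, since $\mu$ is null-additive, (10) of Proposition~\ref{P-wL} makes $\pinftyn{\cdot}$ well-defined on the quotient and, together with point separation, a quasi-norm on $L^{p,\infty}(\mu)$; in particular $\calL^{p,\infty}(\mu)$ is then a genuine linear subspace of $\calF_0(X)$. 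The key observation is that a Cauchy sequence with respect to any relaxed prenorm is automatically bounded: picking $n_0$ with $\pinftyn{f_m-f_{n_0}}<1$ for all $m\geq n_0$ and applying the relaxed triangle inequality bounds $\pinftyn{f_m}$ uniformly over $m\geq n_0$, while the finitely many remaining terms are trivially bounded. Therefore every Cauchy sequence in $\calL^{p,\infty}(\mu)$ (resp.\ $L^{p,\infty}(\mu)$) is a bounded Cauchy sequence and converges by Theorem~\ref{Sh-comp1}; this is the asserted completeness.

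For the record, the substantive content sits in Theorem~\ref{Sh-comp1}, whose proof I would run in parallel with Theorem~\ref{L-comp1}: setting $\nu:=\mu^{1/p}$ so that $\pinftyn{f}=\Sh(\nu,|f|)$ and $\nu$ inherits monotone autocontinuity from below, property~(C), and the (p.g.p.), a bounded Cauchy sequence $\{f_n\}_\seqn$ is Cauchy in $\mu$-measure by (4) of Proposition~\ref{P-wL}, hence by Theorem~\ref{comp} has a subsequence $f_{n_k}\to f$ $\mu$-a.u.\ (equivalently $\nu$-a.u.), which is precisely where property~(C) and the (p.g.p.) enter. Then the Shilkret Fatou almost uniform convergence lemma, Proposition~\ref{Sh-conv}(iii) applied to $\nu$, replaces the Choquet Fatou lemma Proposition~\ref{C-Fatou}: for each fixed $n\geq n_0$ one has $|f_{n_k}-f_n|\to|f-f_n|$ $\nu$-a.u., so $\Sh(\nu,|f-f_n|)\leq\sup_{k\geq k_0}\Sh(\nu,|f_{n_k}-f_n|)\leq\ep$, giving $\pinftyn{f-f_n}\to 0$, and likewise $\Sh(\nu,|f|)\leq\liminf_\kinfty\Sh(\nu,|f_{n_k}|)\leq\sup_\seqn\pinftyn{f_n}<\infty$, so $f\in\calL^{p,\infty}(\mu)$. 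I do not expect a genuine obstacle: the only point to watch is that the Lorentz prenorm of weak type already absorbs the index $p$ through $\nu=\mu^{1/p}$, with no outer power, so the reduction is slightly cleaner than in the Choquet $\calL^q$ case rather than harder.
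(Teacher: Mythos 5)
Your proposal is correct and follows essentially the same route as the paper: the paper proves Corollary~\ref{Sh-comp2} by noting that the hypotheses force $\mu$ to be null-additive with the (p.g.p.), that $\pinftyn{\cdot}$ is a quasi-seminorm by Proposition~\ref{P-wL}, hence every Cauchy sequence is bounded, and then invoking Theorem~\ref{Sh-comp1}, whose proof is in turn obtained from that of Theorem~\ref{L-comp1} by substituting Proposition~\ref{Sh-conv} for Proposition~\ref{C-Fatou} exactly as you describe.
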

\begin{example}
Let $X:=\bbN$ and $\calA:=2^\bbN$.
Let $\mu$ be the nonadditive measure given in Proposition~\ref{counter1}.
Then $\mu$ is subadditive, hence relaxed subadditive, monotone autocontinuous from below,
and has the (p.g.p.), while it does not have property~(C). 
For each $\seqn$, let $A_n:=\{1,2,\dots,n\}$ and $f_n:=\chi_{A_n}$.
Then it follows from (4) of Proposition~\ref{counter1} that $\{f_n\}_\seqn$
does not converge in $\mu$-measure.
Let $0<p<\infty$. Then it follows from (1) of Proposition~\ref{P-wL} that
\[
\pinftyn{f_n}=\left(\sum_{i=1}^n\frac{1}{2^i}\right)^{1/p}\mbox{and}\quad
\pinftyn{f_{n+l}-f_n}=\left(\sum_{i=n+1}^{n+l}\frac{1}{2^i}\right)^{1/p}
\]
for every $n,l\in\bbN$, hence the sequence
$\{f_n\}_\seqn\subset\calL^{p,\infty}(\mu)$ is bounded and Cauchy.
Suppose, contrary to our claim, that $\calL^{p,\infty}(\mu)$ is quasi-complete.
Then, $\{f_n\}_\seqn$ converges with respect to $\pinftyn{\cdot}$, hence it converges
in $\mu$-measure by (4) of Proposition~\ref{P-wL}, which is impossible.
Hence $\calL^{p,\infty}(\mu)$ and $L^{p,\infty}(\mu)$ are not quasi-complete.
This means that property~(C) cannot be dropped
in Theorem~\ref{Sh-comp1} and Corollary~\ref{Sh-comp2}.
\end{example}
According to Theorem~\ref{L-dense}, the set $\calS(X)\cap\calLpq(\mu)$
is dense in $\calLpq(\mu)$.
The following proposition shows that this is not the case of the Lorentz space of weak type.
\begin{proposition}\label{wLex}
Let $X:=(0,1]$ and $\calA$ be the $\sigma$-field of all Borel subsets of $(0,1]$.
Let $\lambda$ be the Lebesgue measure on $\bbR$.
Let $f(x):=1/x$ for every $x\in X$. Then $f\in\calL^{1,\infty}(\lambda)$
and $\calS(X)\subset\calL^{1,\infty}(\lambda)$.
Nevertheless, $\|f-h\|_{1,\infty}\geq 1$ for every $h\in\calS(X)$.
Thus $\calS(X)$ is not dense in $\calL^{1,\infty}(\lambda)$.
\end{proposition}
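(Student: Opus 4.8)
The plan is to unwind the definition and reduce everything to elementary estimates of the distribution function of $1/x$. Since here $p=1$, for any $g\in\calF_0(X)$ we have $\|g\|_{1,\infty}=\Sh(\lambda^{1/1},|g|)=\Sh(\lambda,|g|)=\sup_{t\ge 0}t\lambda(\{|g|>t\})$. First I would verify that $f\in\calL^{1,\infty}(\lambda)$: for $t>0$ the set $\{f>t\}$ equals $(0,1]$ when $t\le 1$ and $(0,1/t)$ when $t>1$, so $\lambda(\{f>t\})=\min\{1,1/t\}$ and $t\lambda(\{f>t\})=\min\{t,1\}\le 1$; thus $\|f\|_{1,\infty}=1<\infty$. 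Next, every $h\in\calS(X)$ is bounded, say $|h|\le M_h<\infty$, so $\{|h|>t\}=\eset$ for $t\ge M_h$ and $t\lambda(\{|h|>t\})\le M_h$ otherwise, giving $\|h\|_{1,\infty}\le M_h<\infty$ and hence $\calS(X)\subset\calL^{1,\infty}(\lambda)$.

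The heart of the matter is the lower bound $\|f-h\|_{1,\infty}\ge 1$ for an arbitrary $h\in\calS(X)$, and the key idea is that $f$ blows up at the origin whereas $h$ stays bounded. Writing $M:=\sup_{x\in X}|h(x)|<\infty$ and taking any $t>0$ with $t+M\ge 1$, the reverse triangle inequality gives $|f(x)-h(x)|\ge 1/x-M>t$ for every $x\in(0,1/(t+M))$, so that $(0,1/(t+M))\subset\{|f-h|>t\}$ and therefore $\lambda(\{|f-h|>t\})\ge 1/(t+M)$. Consequently $t\lambda(\{|f-h|>t\})\ge t/(t+M)$, and letting $t\to\infty$ yields $\|f-h\|_{1,\infty}=\Sh(\lambda,|f-h|)\ge\sup_{t}t/(t+M)=1$. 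Since this holds for every $h\in\calS(X)$, no simple function approximates $f$ within distance $1$ (a fortiori within any distance $<1$), so $\calS(X)$ is not dense in $\calL^{1,\infty}(\lambda)$.

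I do not anticipate any genuine obstacle: the whole argument rests on the monotone distribution function of $1/x$ together with the pointwise estimate $|f-h|\ge|f|-|h|$ near $0$. The only bookkeeping to watch is the condition $t+M\ge 1$, which guarantees that the interval $(0,1/(t+M))$ genuinely lies inside $X=(0,1]$; since the conclusion is obtained by sending $t\to\infty$, this restriction costs nothing.
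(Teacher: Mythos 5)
Your proposal is correct and follows essentially the same route as the paper: both arguments exploit that $|f-h|\ge 1/x - M$ near the origin (the paper phrases this as $|f-h|\ge (f-r_0)^+$ with $r_0=\max|h|$ and computes $\Sh(\lambda,(f-r_0)^+)=1$ exactly, while you lower-bound $t\lambda(\{|f-h|>t\})\ge t/(t+M)$ directly and let $t\to\infty$). The remaining verifications ($\|f\|_{1,\infty}=1$ and boundedness of simple functions) match the paper's as well.
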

\begin{proof}
Elementary computation yields $\|f\|_{1,\infty}=1$, hence $f\in\calL^{1,\infty}(\lambda)$.
Take $h\in\calS(X)$ arbitrary and let $r_0:=\max_{x\in X}|h(x)|<\infty$.
Then we have
\[
\|h\|_{1,\infty}=\Sh(\lambda,|h|)\leq\Sh(\lambda,r_0)=r_0<\infty,
\]
hence $\calS(X)\subset\calL^{1,\infty}(\lambda)$.
\par
We proceed to show that $\|f-h\|_{1,\infty}\geq 1$.
For $0\leq r_0\leq 1$, we have
\[
\{(f-r_0)^+>t\}=\begin{cases}
(0,1] & \mbox{if }0\leq t<1-r_0,\\[1mm]
\Bigl(0,\frac{1}{t+r_0}\Bigr) & \mbox{if }t\geq 1-r_0,
\end{cases}
\]
hence $\Sh(\lambda,(f-r_0)^+)=1$.
For $r_0>1$, we have
\[
\{(f-r_0)^+>t\})=\left(0,\frac{1}{t+r_0}\right)
\]
for every $t\geq 0$, hence $\Sh(\lambda,(f-r_0)^+)=1$.
Since $|f-h|\geq (f-r_0)^+$, it follows that
\[
\|f-h\|_{1,\infty}=\Sh(\lambda,|f-h|)\geq\Sh(\lambda,(f-r_0)^+)=1,
\]
which implies that $\calS(X)$ is no longer dense in $\calL^{1,\infty}(\lambda)$.
\end{proof}
\section{The space of the $\bm{\mu}$-essentially bounded functions}\label{EBF}
Unlike the spaces we have considered so far, the space of all $\mu$-essentially bounded functions
is complete under fairly weak conditions.
\begin{definition}\label{Linfty}
Let $\mu\in\calF_0(X)$. Define the function $\inftyn{\cdot}\colon\calF_0(X)\to [0,\infty]$ by
\[
\inftyn{f}:=\inf\{c>0\colon\mu(\{|f|>c\})=0\}
\]
for every $f\in\calF_0(X)$ and let
\[
\calL^\infty(\mu):=\{f\in\calF_0(X)\colon\inftyn{f}<\infty\}.
\]
If the measure $\mu$ is needed to specify, then $\inftyn{f}$ is written as $\inftyn{f\colon\!\mu}$.
The functions in $\calLinfty(\mu)$ are called \emph{$\mu$-essentially bounded functions}.
\end{definition}
\par
When $\mu$ is $\sigma$-additive, the space $\calLinfty(\mu)$ is nothing but
the ordinary seminormed space of all $\mu$-essentially bounded,
$\calA$-measurable functions on $X$ with seminorm $\inftyn{\cdot}$.
In general, the prenorm $\inftyn{\cdot}$ on $\calLinfty(\mu)$ does not satisfy
the triangle inequality.
\begin{proposition}\label{Linftyn}
Let $\mu\in\calM(X)$.
\begin{enumerate}
\item[\us{(1)}] For any $A\in\calA$ it follows that
\[
\inftyn{\chi_A}=\begin{cases}
0 & \mbox{if }\mu(A)=0,\\
1 & \mbox{if }\mu(A)>0.
\end{cases}
\]
\item[\us{(2)}] For any $f\in\calLinfty(\mu)$
it follows that $\inftyn{f}=0$ if and only if $\mu(\{|f|>c\})=0$ for every $c>0$;
they are equivalent to $\mu(\{|f|>0\})=0$ if $\mu$ is null-continuous.
\item[\us{(3)}] For any $f\in\calLinfty(\mu)$ and $c\in\bbR$
it follows that $\inftyn{cf}=|c|\inftyn{f}$.
Hence $\inftyn{\cdot}$ is homogeneous.
\item[\us{(4)}] For any $f,g\in\calLinfty(\mu)$, if $|f|\leq |g|$ then $\inftyn{f}\leq\inftyn{g}$.
\item[\us{(5)}] If $\mu$ is null-continuous, then for any $f\in\calLinfty(\mu)$
it follows that $|f|\leq\inftyn{f}$ $\mu$-a.e.
\item[\us{(6)}] The following assertions are equivalent.
\begin{enumerate}
\item[\us{(i)}] $\mu$ is weakly null-additive.
\item[\us{(ii)}] $\inftyn{\cdot}$ satisfies the triangle inequality.
\item[\us{(iii)}] $\inftyn{\cdot}$ is null-additive.
\item[\us{(iv)}] $\inftyn{\cdot}$ is weakly null-additive.
\end{enumerate}
\item[\us{(7)}] $\mu$ is weakly null-additive if and only if it follows that $\inftyn{f}=\inftyn{g}$
whenever $f,g\in\calLinfty(\mu)$ and $f\sim g$.
\end{enumerate}
\end{proposition}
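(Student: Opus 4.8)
The plan is to verify assertions (1)--(5) by direct manipulation of the sublevel sets $\{|f|>c\}$ and then to obtain (6) as a cycle of four implications and (7) from (6). First I would record the elementary remark that, by the monotonicity of $\mu$, the set $\{c>0\colon\mu(\{|f|>c\})=0\}$ is an up-set in $(0,\infty)$ whose infimum is $\inftyn{f}$; in particular $\mu(\{|f|>c\})=0$ for every $c>\inftyn{f}$. Assertion (1) then follows from $\{|\chi_A|>c\}=A$ for $0<c<1$ and $=\eset$ for $c\geq1$. For (2), the ``only if'' direction holds because for each $c>0$ some radius in the up-set is $<c$, forcing $\mu(\{|f|>c\})=0$, while the ``if'' direction is trivial; the extra equivalence under null-continuity comes from $\{|f|>0\}=\bigcup_\seqn\{|f|>1/n\}$, an increasing union of $\mu$-null sets. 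Assertion (3) reduces to $c\neq0$ via $\{|cf|>t\}=\{|f|>t/|c|\}$, and (4) is immediate from $\{|f|>c\}\subset\{|g|>c\}$. For (5), setting $a:=\inftyn{f}$, each $a+1/n$ lies in the up-set, so $\{|f|>a\}=\bigcup_\seqn\{|f|>a+1/n\}$ is an increasing union of $\mu$-null sets and null-continuity gives $\mu(\{|f|>a\})=0$.

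For (6) I would prove the cycle (i)$\Rightarrow$(ii)$\Rightarrow$(iii)$\Rightarrow$(iv)$\Rightarrow$(i). The substantive step is (i)$\Rightarrow$(ii): with $a:=\inftyn{f}$, $b:=\inftyn{g}$ and $\ep>0$, the pointwise bound $|f+g|\leq|f|+|g|$ yields $\{|f+g|>a+b+\ep\}\subset\{|f|>a+\tfrac{\ep}{2}\}\cup\{|g|>b+\tfrac{\ep}{2}\}$; both sets on the right have $\mu$-measure $0$ since $a+\tfrac{\ep}{2}$ and $b+\tfrac{\ep}{2}$ lie in the respective up-sets, so the weak null-additivity of $\mu$ gives $\mu(\{|f+g|>a+b+\ep\})=0$, hence $\inftyn{f+g}\leq a+b+\ep$, and $\ep\downarrow0$ finishes. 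For (ii)$\Rightarrow$(iii): if $\inftyn{g}=0$, the triangle inequality and homogeneity (so $\inftyn{-g}=\inftyn{g}=0$) yield both $\inftyn{f+g}\leq\inftyn{f}$ and $\inftyn{f}=\inftyn{(f+g)+(-g)}\leq\inftyn{f+g}$, so $\inftyn{f+g}=\inftyn{f}$. The implication (iii)$\Rightarrow$(iv) is a special case. For (iv)$\Rightarrow$(i): if $A,B\in\calA$ with $\mu(A)=\mu(B)=0$, then $\inftyn{\chi_A}=\inftyn{\chi_B}=0$ by (1), so the weak null-additivity of $\inftyn{\cdot}$ together with $\chi_{A\cup B}\leq\chi_A+\chi_B$ and (4) give $\inftyn{\chi_{A\cup B}}=0$, i.e.\ $\mu(A\cup B)=0$ again by (1).

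Finally I would settle (7). For ``only if'', the weak null-additivity of $\mu$ makes $\sim$ a genuine equivalence relation, and by (6) it makes $\inftyn{\cdot}$ null-additive; since (2) identifies the condition $f\sim g$ with $\inftyn{f-g}=0$, we get $\inftyn{f}=\inftyn{(f-g)+g}=\inftyn{g}$. For ``if'', given $A,B\in\calA$ with $\mu(A)=\mu(B)=0$, the functions $\chi_{A\cup B}$ and $\chi_A$ differ by $\chi_{B\setminus A}$, so $\mu(\{|\chi_{A\cup B}-\chi_A|>c\})=0$ for all $c>0$, i.e.\ $\chi_{A\cup B}\sim\chi_A$; the hypothesis then gives $\inftyn{\chi_{A\cup B}}=\inftyn{\chi_A}=0$, whence $\mu(A\cup B)=0$ by (1). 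The only place I expect to require care is the estimate in step (i)$\Rightarrow$(ii) of (6) --- precisely the point that $a+\tfrac{\ep}{2}$ and $b+\tfrac{\ep}{2}$ fall inside the ``null-radius'' up-set --- which rests on the up-set remark made at the outset; the remaining arguments are routine bookkeeping.
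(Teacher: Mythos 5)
Your proposal is correct and follows essentially the same route as the paper: the paper likewise leaves (1)--(5) as elementary consequences of the behaviour of the sets $\{|f|>c\}$, proves (i)$\Rightarrow$(ii) of (6) by exactly your estimate (taking $a>\inftyn{f}$, $b>\inftyn{g}$ and letting them decrease, which is your $\ep/2$ argument), closes the cycle through (iii) and (iv) back to (i) via characteristic functions, and handles (7) with the same pair $\chi_{A\cup B}$, $\chi_A$. The only cosmetic difference is that in (iv)$\Rightarrow$(i) the paper uses $\chi_A+\chi_{B\setminus A}=\chi_{A\cup B}$ exactly, whereas you use $\chi_{A\cup B}\leq\chi_A+\chi_B$ together with monotonicity; both work.
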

\begin{proof}
Assertions (1)--(5) are easy to prove.
\par
(6) (i)$\Rightarrow$(ii)\ Let $f,g\in\calLinfty(\mu)$.
For any $a,b\in\bbR$, if $a>\inftyn{f}$ and $b>\inftyn{g}$, then
$\mu(\{|f|>a\})=\mu(\{|g|>b\})=0$, hence $\mu(\{|f+g|>a+b\})=0$
by the weak null-additivity of $\mu$,
and finally $\inftyn{f+g}\leq a+b$.
Letting $a\downarrow\inftyn{f}$ and $b\downarrow\inftyn{g}$
yields $\inftyn{f+g}\leq\inftyn{f}+\inftyn{g}$.
\par
The implications (ii)$\Rightarrow$(iii)$\Rightarrow$(iv) are obvious.
\par
(iv)$\Rightarrow$(i)\ Take $A,B\in\calA$ and assume that $\mu(A)=\mu(B)=0$.
Let $f:=\chi_A$ and $g:=\chi_{B\setminus A}$. Then $\inftyn{f}=\inftyn{g}=0$,
hence $\inftyn{f+g}=0$ by (iv), which shows that $\mu(A\cup B)=0$ since $f+g=\chi_{A\cup B}$.
Therefore, $\mu$ is weakly null-additive.
\par
(7)\ The ``only if'' part:\ Let $f,g\in\calLinfty(\mu)$ and assume that $f\sim g$.
Then $\inftyn{f-g}=0$.
Since $\mu$ is weakly null-additive, $\inftyn{\cdot}$ is null-additive by (6),
so that $\inftyn{f}=\inftyn{g+(f-g)}=\inftyn{g}$.
\par
The ``if'' part:\ Take $A,B\in\calA$ and assume that $\mu(A)=\mu(B)=0$.
Let $f:=\chi_{A\cup B}$ and $g:=\chi_A$. Then $\inftyn{f-g}=\inftyn{\chi_{B\setminus A}}=0$,
hence $f\sim g$. Thus, $\inftyn{f}=\inftyn{g}$, so that $\inftyn{f}=0$ since $\inftyn{g}=0$.
Hence $\mu(A\cup B)=0$, which shows that $\mu$ is weakly null-additive.
\end{proof}
The quotient space
\[
L^{\infty}(\mu):=\{[f]\colon f\in\calL^{\infty}(\mu)\}
\]
is defined by the equivalence relation stated in Subsection~\ref{equiv}.
Given an equivalence class $[f]\in L^{\infty}(\mu)$,
define the prenorm on $L^{\infty}(\mu)$ by $\pinftyn{[f]}:=\pinftyn{f}$,
which is well-defined if $\mu$ is weakly null-additive by (7) of Proposition~\ref{Linftyn}.  
This prenorm has the same properties as the prenorm on $\calLinfty(\mu)$
and separates points of $L^{\infty}(\mu)$, that is, for any $[f]\in L^{\infty}(\mu)$,
if $\inftyn{[f]}=0$ then $[f]=0$.
\par
The following theorem shows that it is not necessary to assume property~(C) and the (p.g.p.)
to show the completeness of the spaces $\calLinfty(\mu)$ and $L^\infty(\mu)$. 
\begin{theorem}\label{comp1}
Let $\mu\in\calM(X)$.
Assume that $\mu$ is weakly null-additive and null-continuous.
Then $\calL^\infty(\mu)$ is complete with respect to the seminorm $\inftyn{\cdot}$
and $L^\infty(\mu)$ is complete with respect to the norm $\inftyn{\cdot}$. 
\end{theorem}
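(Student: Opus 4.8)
The plan is to mimic the classical $\calL^\infty$ argument, the two hypotheses entering only to guarantee that $\inftyn{\cdot}$ is a genuine seminorm and that $\mu$-null sets are stable under countable unions.

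First I would collect the facts that drive the proof. By (6) of Proposition~\ref{Linftyn}, the weak null-additivity of $\mu$ makes $\inftyn{\cdot}$ satisfy the triangle inequality, so Cauchy sequences and convergence with respect to $\inftyn{\cdot}$ behave exactly as in an ordinary seminormed space (in particular every Cauchy sequence is bounded); and by (5) of Proposition~\ref{Linftyn}, the null-continuity of $\mu$ gives $|g|\le\inftyn{g}$ $\mu$-a.e.\ for every $g\in\calLinfty(\mu)$. I would also record the elementary remark that a countable union of $\mu$-null sets is $\mu$-null: a finite union of $\mu$-null sets is $\mu$-null by (repeated application of) weak null-additivity, and the nondecreasing sequence formed by these finite unions has $\mu$-null union by null-continuity.

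Next, given a Cauchy sequence $\{f_n\}_\seqn\subset\calLinfty(\mu)$, for every pair $m,n$ choose $N_{m,n}\in\calA$ with $\mu(N_{m,n})=0$ and $|f_m(x)-f_n(x)|\le\inftyn{f_m-f_n}$ for $x\notin N_{m,n}$, and choose $M_n\in\calA$ with $\mu(M_n)=0$ and $|f_n(x)|\le\inftyn{f_n}$ for $x\notin M_n$. Set $N:=\bigcup_{m,n}N_{m,n}\cup\bigcup_n M_n$; then $\mu(N)=0$ by the remark above. Off $N$ the sequence $\{f_n(x)\}_\seqn$ is uniformly Cauchy in $\bbR$, so $f:=\lim_\ninfty(f_n\chi_{X\setminus N})$ is a pointwise limit that exists everywhere, hence an $\calA$-measurable real-valued function, and it equals $\lim_n f_n(x)$ for $x\notin N$ and $0$ on $N$. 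Since $\{f_n\}_\seqn$ is Cauchy, $M:=\sup_\seqn\inftyn{f_n}<\infty$; as $|f_n(x)|\le M$ for all $n$ and all $x\notin N$, we get $|f(x)|\le M$ there, so $\{|f|>M\}\subset N$ and $\inftyn{f}\le M$, i.e.\ $f\in\calLinfty(\mu)$. To finish, fix $\ep>0$ and choose $n_0$ with $\inftyn{f_m-f_n}<\ep$ for $m,n\ge n_0$; then $|f_m(x)-f_n(x)|<\ep$ for all $x\notin N$ and $m,n\ge n_0$, and letting $m\to\infty$ gives $|f(x)-f_n(x)|\le\ep$ for all $x\notin N$ and $n\ge n_0$. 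Hence $\{|f_n-f|>\ep\}\subset N$ and $\inftyn{f_n-f}\le\ep$ for $n\ge n_0$, so $f_n\to f$ with respect to $\inftyn{\cdot}$ and $\calLinfty(\mu)$ is complete. For the quotient, $\inftyn{\cdot}$ is a seminorm on $L^\infty(\mu)$ by (6) of Proposition~\ref{Linftyn} and separates points as already noted, hence a norm; and completeness transfers verbatim, since a Cauchy sequence $\{[f_n]\}_\seqn$ in $L^\infty(\mu)$ lifts to a Cauchy sequence $\{f_n\}_\seqn$ in $\calLinfty(\mu)$ because $\inftyn{[f_m]-[f_n]}=\inftyn{f_m-f_n}$, so $f_n\to f$ in $\calLinfty(\mu)$ for some $f$, whence $[f_n]\to[f]$ in $L^\infty(\mu)$.

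I expect the only genuinely nonroutine point to be the stability of null sets under countable unions — the single place where weak null-additivity and null-continuity are used together — and the bookkeeping of collecting all the exceptional sets into one $\mu$-null set $N$ before passing to the pointwise limit; once that is in place, the remainder is the standard $\calL^\infty$ completeness argument.
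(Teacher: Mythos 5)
Your proposal is correct and follows essentially the same route as the paper: both collect the exceptional sets where $|f_m-f_n|\le\inftyn{f_m-f_n}$ fails (via (5) of Proposition~\ref{Linftyn} and null-continuity) into a single $\mu$-null set using weak null-additivity for finite unions and null-continuity for the increasing limit, then define $f$ as the pointwise limit off that set. Your extra step of adjoining the sets $M_n$ and using boundedness of the Cauchy sequence to verify $f\in\calLinfty(\mu)$ merely fills in a detail the paper leaves to the reader.
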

\begin{proof}
Let $\{f_n\}_\seqn\subset\calLinfty(\mu)$ be a Cauchy sequence.
Then, $\mu$ being weakly null-additive, by (6) of Proposition~\ref{Linftyn},
for any $m,n\in\bbN$ we have $f_m-f_n\in\calLinfty(\mu)$, so that
\begin{equation}\label{eq:comp1}
\mu(\{|f_m-f_n|>\inftyn{f_m-f_n}\})=0
\end{equation}
by (5) of Proposition~\ref{Linftyn} and the null-continuity of $\mu$.
Let
\[
E:=\bigcup_{m=1}^\infty\bigcup_{n=1}^\infty\{|f_m-f_n|>\inftyn{f_m-f_n}\}.
\]
For each $\seqk$ let
\[
E_k:=\bigcup_{m=1}^k\bigcup_{n=1}^k\{|f_m-f_n|>\inftyn{f_m-f_n}\}.
\]
Then $E_k\uparrow E$.
Since $\mu$ is weakly null-additive, \eqref{eq:comp1} implies that $\mu(E_k)=0$ for every $\seqk$,
hence $\mu(E)=0$ by the null-continuity of $\mu$.
\par
For any $x\not\in E$, the sequence $\{f_n(x)\}_\seqn$ is Cauchy in $\bbR$,
so that the $\calA$-measurable function $f\colon X\to\bbR$ can be defined by
\[
f(x):=\begin{cases}
\lim\limits_\ninfty f_n(x) & \mbox{if }x\not\in E,\\[1.5mm]
\;0 & \mbox{otherwise}.
\end{cases}
\]
Then it is easy to see that $\inftyn{f_n-f}\to 0$ and $f\in\calL^\infty(\mu)$.
Therefore $\calLinfty(\mu)$ is complete.
Furthermore, $\mu$ being weakly null-additive,
the quotient space $L^\infty(\mu)$ and the prenorm $\inftyn{\cdot}$ on $L^\infty(\mu)$
are well-defined and it turns out that $L^\infty(\mu)$ is complete.
\par
Finally, by (3) and (6) of Proposition~\ref{Linftyn}
the prenorm $\inftyn{\cdot}$ is a seminorm on $\calLinfty(\mu)$ and a norm on $L^\infty(\mu)$.
\end{proof}
The following theorem shows that $S(X)$ is dense in $\calLinfty(\mu)$
for any nonadditive measure $\mu$.
\begin{theorem}
Let $\mu\in\calM(X)$. Then $\calS(X)$ is dense in $\calLinfty(\mu)$.
If $\mu$ is weakly null-additive, then $S(X)$ is dense in $L^\infty(\mu)$.
\end{theorem}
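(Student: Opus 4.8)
The plan is to approximate a given $\mu$-essentially bounded function by first truncating it to a genuinely bounded function, then invoking the classical uniform approximation of bounded measurable functions by simple functions, and finally checking that the resulting error set is $\mu$-null. Fix $f\in\calLinfty(\mu)$ and $\ep>0$, and set $M:=\inftyn{f}<\infty$. Because $\{c>0\colon\mu(\{|f|>c\})=0\}$ is upward closed (if $c$ lies in it and $c'>c$ then $\{|f|>c'\}\subset\{|f|>c\}$) and has infimum $M$, it contains every $c>M$; choose any $c\in(M,M+\ep/2)$, put $A:=\{|f|>c\}$, so that $\mu(A)=0$, and let $g:=f\chi_{A^c}$. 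Then $g\in\calF_0(X)$ and $|g|\leq c$ everywhere on $X$.

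Next, since $g$ is a bounded measurable function, there is a simple function $h\in\calS(X)$ with $\sup_{x\in X}|g(x)-h(x)|<\ep/2$: partition $[-c,c]$ into finitely many half-open subintervals of length less than $\ep/2$ and let $h$ be constant on each corresponding preimage. The point that needs care — since $\inftyn{\cdot}$ does not satisfy the triangle inequality — is that I would \emph{not} estimate $\inftyn{f-h}$ via the splitting $f-h=(f-g)+(g-h)$, but instead bound the level set directly: on $A^c$ one has $|f-h|=|g-h|<\ep/2$, hence $\{|f-h|>\ep/2\}\subset A$, so $\mu(\{|f-h|>\ep/2\})\leq\mu(A)=0$ and therefore $\inftyn{f-h}\leq\ep/2<\ep$. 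As every simple function is bounded, hence lies in $\calLinfty(\mu)$, this proves that $\calS(X)$ is dense in $\calLinfty(\mu)$.

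For the quotient statement, assume $\mu$ is weakly null-additive, so that $L^\infty(\mu)$, its subset $S(X)$, and the quotient prenorm $\inftyn{\cdot}$ on $L^\infty(\mu)$ are all well-defined, the last by (7) of Proposition~\ref{Linftyn}. Given $[f]\in L^\infty(\mu)$ and $\ep>0$, apply the first part to obtain $h\in\calS(X)$ with $\inftyn{f-h}<\ep$; then $[h]\in S(X)$ and $\inftyn{[f]-[h]}=\inftyn{[f-h]}=\inftyn{f-h}<\ep$, which gives the denseness of $S(X)$ in $L^\infty(\mu)$.

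I do not expect a genuine obstacle here: the only subtleties are that the truncation level $c$ must be taken in the region where $\mu(\{|f|>c\})$ already vanishes — which is exactly what $\inftyn{f}<\infty$ guarantees — and that the lack of a triangle inequality must be sidestepped by the direct level-set computation above rather than by adding prenorms.
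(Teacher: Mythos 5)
Your proof is correct and follows essentially the same route as the paper's: pass to a $\mu$-null set off which $f$ is bounded, approximate $f$ uniformly there by a simple function $h$, and bound $\inftyn{f-h}$ by noting that $\{|f-h|>\ep/2\}$ is contained in that null set, which sidesteps the missing triangle inequality exactly as the paper does. The only cosmetic difference is that the paper uses a uniformly convergent sequence $\{h_n\}$ instead of constructing a single $h$ within $\ep/2$.
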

\begin{proof}
Let $f\in\calLinfty(\mu)$. Then there is an $N\in\calA$ such that $\mu(N)=0$
and $f$ is bounded on $X\setminus N$,
thus we can find a sequence $\{h_n\}_\seqn\subset\calS(X)$
such that $\sup_{x\not\in N}|f(x)-h_n(x)|\to 0$.
Let $\ep>0$. Then there is an $n_0\in\bbN$
such that $\sup_{x\not\in N}|f(x)-h_{n_0}(x)|\leq\ep/2$,
so that $\mu(\{|f-h_{n_0}|>\ep/2\})=0$, and finally that $\inftyn{f-h_{n_0}}\leq\ep/2<\ep$.
This means that $\calS(X)$ is dense in $\calLinfty(\mu)$.
The denseness of $S(X)$ in $L^\infty(\mu)$ is now obvious.
\end{proof}
\begin{remark}
As is well-known, the spaces $\calLinfty(\mu)$ and $L^\infty(\mu)$
are not separable even if $\mu$ is $\sigma$-additive.
\end{remark}
\section{Summary of results and future tasks}\label{conclusion}
In this paper, given a nonadditive measure $\mu$,
the space $\calL^0(\mu)$ of all measurable functions,
the Choquet-Lorentz space $\calLpq(\mu)$,
the Lorentz space of weak type $\calL^{p,\infty}(\mu)$,
the space $\calLinfty(\mu)$ of all $\mu$-essentially bounded functions,
and their quotient spaces are defined together with suitable prenorms on them.
The completeness and separability of those spaces are also discussed
relating the characteristic of $\mu$.
Some of our results are as follows.
\begin{itemize}
\item Assume that $\mu$ has property~(C) and the (p.g.p.).
Then $\calL^0(\mu)$ is complete.
If $\mu$ is additionally assumed to be null-additive, then $L^0(\mu)$ is complete.
\item Let $0<p<\infty$ and $0<q\leq\infty$.
If $\mu$ is monotone autocontinuous from below and has property~(C) and the (p.g.p.),
then $\calLpq(\mu)$ and $L^{p,q}(\mu)$ are quasi-complete.
If $\mu$ is relaxed subadditive, monotone autocontinuous from below, and has property~(C),
then $\calLpq(\mu)$ is complete with respect to the quasi-seminorm $\pqn{\cdot}$
and $L^{p,q}(\mu)$ is complete with respect to the quasi-norm $\pqn{\cdot}$.
\item If $\mu$ is weakly null-additive and null-continuous, then $\calLinfty(\mu)$
is complete with respect to the seminorm $\inftyn{\cdot}$ and $L^\infty(\mu)$
is complete with respect to the norm $\inftyn{\cdot}$.
\end{itemize}
\par
All the results listed above hold for every subadditive nonadditive measure
that is continuous from below
since such a nonadditive measure is relaxed subadditive,
monotone autocontinuous from below, null-continuous,
null-additive, weakly null-additive, and has property~(C) and the (p.g.p.).
\par
Concerning dense subsets and the separability,  the following results are shown among others.
\begin{itemize}
\item Assume that $\mu$ is order continuous. Then $\calS(X)$ is dense in $\calL^0(\mu)$.
If $\mu$ is additionally assumed to be null-additive, then $S(X)$ is dense in $L^0(\mu)$.
\item Assume that $\mu$ is order continuous and has the (p.g.p.).
Assume that $\mu$ has a countable basis.
Then $\calL^0(\mu)$ is separable.
If $\mu$ is additionally assumed to be null-additive, then $L^0(\mu)$ is separable.
\item Let $0<p<\infty$ and $0<q<\infty$.
Assume that $\mu$ is conditionally order continuous.
Then $\calS(X)\cap\calLpq(\mu)$ is dense in $\calLpq(\mu)$.
If $\mu$ is additionally assumed to be null-additive, then $S(X)\cap L^{p,q}(\mu)$
is dense in $L^{p,q}(\mu)$.
\item Let $0<p<\infty$ and $0<q<\infty$.
Assume that $\mu$ is conditionally order continuous and relaxed subadditive.
Assume that $\mu$ has a countable basis of sets in $\calA$ with finite $\mu$-measure.
Then $\calLpq(\mu)$ is separable.
If $\mu$ is additionally assumed to be null-additive, then $L^{p,q}(\mu)$ is separable.
\item The set $\calS(X)$ is dense in $\calLinfty(\mu)$ for any nonadditive measure $\mu$.
If $\mu$ is weakly null-additive, then $S(X)$ is dense in $L^\infty(\mu)$.
\end{itemize}
\par
In this manner, the study of function spaces in the framework of
nonadditive measure theory has an advantage of significantly
expanding the scope of application of the theory.
Another advantage is noticeable when studying the Choquet-Lorentz space.
As already mentioned in Section~\ref{CLspace}, for any $\mu\in\calM(X)$ and $f\in\calLpq(\mu)$,
it follows that
\[
\pqnm{f}{\mu}=\left(\frac{p}{q}\right)^{1/q}\qnm{f}{\mupq}
\]
and that $\calLpq(\mu)=\calLq(\mupq)$.
This means that many properties of the Choquet-Lorentz space $\calLpq(\mu)$
can be immediately derived from
those of the space $\calLq(\mupq)$.
Although $\mupq$, which is the power of $\mu$, is not additive in general
even if $\mu$ is additive,
it preserves nonadditive characteristics of $\mu$
such as the weak null-additivity, the null-additivity,
the (p.g.p.), properties~(C), ($\mbox{C}_0$), (S), and ($\mbox{S}_1$),
the relaxed subadditivity, and many kinds of continuity and autocontinuity.
This observation suggests the significance of formulating the results of ordinary measure theory
in terms of the characteristics of nonadditive measures.
This is because the above-mentioned preservation of the characteristic of nonadditive measures
cannot be used when dealing only with additive measures.
\par
The properties established in this paper will be fundamental and important when studying
various function spaces appeared in nonadditive measure theory.
It is a future task to investigate further profound properties of those function spaces.
\par
According to Theorem~\ref{NSC}, when $X$ is countable,
if a nonadditive measure $\mu$ is null-continuous and has the (p.g.p.),
then property~(C) is a necessary and sufficient condition for the Cauchy criterion to hold
for convergence in $\mu$-measure.
It would be desirable to find such a condition for an uncountable $X$,
but we have not been able to do this. This is another future task.
\section*{Acknowledgements}
The authors are grateful to the referees for their helpful comments and suggestions.
%
%

\end{document}